\newcommand{\R}{\mathbb{R}}
\newcommand{\N}{\mathbb{N}}
\renewcommand{\H}{\mathcal{H}}
\renewcommand{\S}{\mathcal{S}}
\newcommand{\C}{\mathcal{C}}
\newcommand{\K}{\mathcal{K}}
\newcommand{\p}{\mathcal{P}}
\newcommand{\Om}{\Omega}
\newcommand{\om}{\omega}
\newcommand{\eps}{\varepsilon}
\newtheorem{theorem}{Theorem}
\newtheorem{proposition}[theorem]{Proposition}
\newtheorem{corollary}[theorem]{Corollary}
\newtheorem{definition}[theorem]{Definition}
\newtheorem{remark}[theorem]{Remark}
\newtheorem{conjecture}{Conjecture}
\renewcommand{\H}{\mathcal{H}}
\renewcommand{\S}{\mathbb{S}}
\newcommand{\J}{\mathcal{J}}
\newcounter{mnotecount}[section]
\newcommand{\rmnote}[1]{}
\title{Optimal $L^p$-approximation of convex sets by convex subsets}
\author{
  Zakaria Fattah 
  \and
  Ilias Ftouhi 
  \and
  Enrique Zuazua
}
\begin{document}
\date{\today}

\maketitle

{\centering\footnotesize \textit{Dedicated to the memory of Professor Hammadi Bouslous}.\par}

\begin{abstract}
Given a convex set $\Omega$ of $\mathbb{R}^n$, we consider the shape optimization problem of finding a convex subset $\omega\subset \Omega$, of a given measure, minimizing the $p$-distance functional
$$\J_p(\omega) :=  \left(\int_{\S^{n-1}} |h_\Omega-h_\omega|^p d\H^{n-1}\right)^{\frac{1}{p}},$$
where $1 \le p <\infty$ and $h_\omega$ and $h_\Omega$ are the support functions of $\omega$ and the fixed container $\Omega$, respectively.

We prove the existence of solutions and show that this minimization problem $\Gamma$-converges, when $p$ tends to $+\infty$, towards the problem of finding a convex subset $\omega\subset \Omega$, of a given measure, minimizing the Hausdorff distance to the convex $\Omega$.

In the planar case, we show that the free parts of the boundary of the optimal shapes, i.e., those that are in the interior of $\Om$, are given by polygonal lines.

Still in the $2-d$ setting, from a computational perspective, the classical method based on optimizing Fourier coefficients of support functions is not efficient, as it is unable to efficiently capture the presence of segments on the boundary of optimal shapes. We subsequently propose a method combining Fourier analysis and a numerical scheme recently introduced in \cite{beni_num},  allowing to obtain  accurate results, as demonstrated through numerical experiments.

\end{abstract}

\section{Introduction and main results}

The strategic placement and shape design of sensors and actuators is of paramount importance in numerous applications involving  Partial Differential Equation (PDEs) models or purely geometric ones as they influence system's behavior and enhance its observability and controllability properties. From a mathematical viewpoint, a multitude of interesting problems can be formulated in the context of optimal design, aiming to identify the subdomains minimizing a certain energy functional measuring sensor/actuator performance, which could be purely geometric or related to the PDEs under consideration, while satisfying some natural constraints related, for instance, to their shape, number of connected components, size, etc.

This topic has been widely explored across diverse contexts and by various scientific communities, highlighting its broad applicability and theoretical significance. Here, we provide a non-exhaustive list of typical related problems and references:


\begin{itemize}
      \item The optimal shape design of actuators within a given set under the framework of optimal control is well-documented. Notable references include \cite{PrivatTrelatZuazua13,zbMATH06440180,zbMATH06722979}, which discuss various approaches and methodologies for actuator placement that optimize control effectiveness.
  \item The minimization of the average distance within a subset, first introduced in 2002, is a classic problem \cite{zbMATH01996419,zbMATH05058778}. A comprehensive overview of this problem and its developments can be found in \cite{zbMATH06110561}, which also presents a detailed state of the art. More recent studies have shifted focus towards minimizing the maximal distance, expanding the theoretical and practical implications of this problem \cite{zbMATH,zbMATH06996635,Ftouhi_zuazua, zbMATH06919706}.
\item The strategic placement of cavities within a region to either minimize or maximize specific eigenvalues of a differential operator has attracted significant attention. This area of study offers rich literature and an array of methodologies. For an up-to-date overview, readers may refer to \cite{ftouhi_steklov}, which includes a comprehensive introduction along with extensive bibliographical references. 
\item It is also worth noting that such point of view has also been considered for problems of mathematical biology such as the optimal distribution of resources so as to maximize population size \cite{zbMATH07155098,zbMATH07168708} and the mathematical analysis of the optimal habitat configurations for species persistence \cite{zbMATH05240724}.  
    

\end{itemize}

The diverse applications and theoretical depth of these problems underline the importance of developing robust mathematical methods for actuators optimal design and placement. This article aims to contribute to this ongoing research by presenting new findings that enhance the understanding of optimal design strategies in systems governed by purely geometric criteria. It
naturally extends the study presented in \cite{Ftouhi_zuazua}, where the authors studied the problem of optimal shape design of a convex subset $\omega$ of a given measure contained in a convex container $\Omega\subset\mathbb{R}^n$ in such a way to minimize the Hausdorff distance $d^H(\om,\Om)$ between the two sets.  

This problem is mathematically formulated as follows 
\begin{equation}\label{prob:hausdorff_zuazua}
    \min\{d^H(\om,\Om)\ |\ \omega\subset \Om\ \text{is convex and}\ |\om|=c \},
\end{equation}
with $c\in[0,|\Om|]$ and $d^H(\om,\Om)$ the Hausdorff distance between the sets $\omega$ and $\Omega$ defined as follows
\begin{equation}\label{eq:def_hausdorff}
d^H(\om,\Om) := \max\left\{ \sup_{x\in\om} \inf_{y\in \Om} \|x-y\|, \sup_{x\in\Om} \inf_{y\in \om} \|x-y\|\right\}.    
\end{equation}

Manipulating formula \eqref{eq:def_hausdorff} presents clear challenges as it involves the infinity norm of the distance function which is non differentiable. To address this, the authors of \cite{Ftouhi_zuazua} consider an analytic approach consisting of parameterizing convex sets through their support functions. 

The support function of a convex set $\Omega\subset\mathbb{R}^n$ can be defined as a function of $\R^n$ as follows: $$h_\Om:x\in \R^n\to \sup\{\langle x,y \rangle\ |\ y\in \Omega\}.$$
By the  definition, one can see that the support function $h_\Om$ is convex on $\R^n$ and thus also continuous.

Since the function $h_\Om$ satisfies the scaling property $h_\Om(t x) = t h_\Om(x)$ for $t>0$, it is completely determined by its values on the unit sphere $\mathbb{S}^{n-1}$. Therefore, from now on, we will consider support functions defined on $\mathbb{S}^{n-1}$ as follows: 
\begin{equation}\label{def:support_intro}
    h_\Om:\theta\in \mathbb{S}^{n-1} \longrightarrow  \sup\{\langle \theta,y \rangle\ |\ y\in \Omega\}.
\end{equation}

For a brief presentation of support functions and their key properties, we refer the reader to Section \ref{ss:support}. 

The perspective of parameterizing sets via their support functions enables the reformulation of the geometric problem \eqref{prob:hausdorff_zuazua} into an analytical one as the Hausdorff distance between two convex sets $\om$ and $\Om$ is given by 
\begin{equation}\label{eq:norm_infinie}
  \J_\infty(\omega) :=  d^H(\om,\Om)= \|h_\Om-h_\om\|_\infty:=\max_{\theta\in\mathbb{S}^{n-1}}|h_\Om(\theta)-h_\om(\theta)|,
\end{equation}
where $h_\omega,h_\Omega: \mathbb{S}^{n-1}\longrightarrow \mathbb{R}$ correspond to the support functions of $\omega$ and $\Omega$ respectively, defined on the unit sphere $\mathbb{S}^{n-1}$. This approach is also used to propose and implement a scheme for the numerical resolution of problem \eqref{prob:hausdorff_zuazua}, see \cite[Section 5]{Ftouhi_zuazua}.

Nevertheless, being non differentiable, the infinity norm in \eqref{eq:norm_infinie} raises  numerical and theoretical challenges. To avoid such differentiability issues, it is then natural to consider smooth approximations of the Hausdorff distance
via the following $p$-distance between convex sets
\begin{equation}\label{p_distance}
    \J_p(\omega) :=  \|h_\Omega-h_\omega\|_p := \left(\int_{\mathbb{S}^{n-1}} |h_\Omega-h_\omega|^p d\mathcal{H}^{n-1}\right)^{\frac{1}{p}},
\end{equation}
with $p\ge 1$ and $\mathcal{H}^{n-1}$ being the $(n-1)$-dimensional Hausdorff measure on the sphere $\mathbb{S}^{n-1}$. \vspace{2mm}

The functionals $(\mathcal{J}_p)$ have the advantage of being shape differentiable, with explicit formulas for the shape derivatives, which is essential for numerical optimization. \vspace{1mm} 

In the present work, we propose to introduce and study the following approximated problems 
$$(\p_p):\ \ \ \ \ \sigma_p :=\inf\{\J_p(\omega)\ |\ \omega\subset \Om\ \text{is convex and}\ |\om|=c \},$$
with $c\in[0,|\Om|]$. 

Before presenting the main contributions of the present paper, it is worth noting that the $p$-distances introduced in \eqref{p_distance} provide classical metrics on the space of convex bodies. Such metrics have been investigated by R. A. Vitale in \cite{zbMATH03958124} for $p\ge 1$ and by A. Florian in \cite{zbMATH04117342} for $p=1$. We also refer to a relatively more recent paper \cite{henrot_harrel} by A. Henrot and E. Harrel who studied shape optimization problems involving the $p$-distance functionals for $p=2$ and $p=\infty$. 

On an other note, one might naturally expect the solutions of the approximated problems $(\p_p)$ to be smooth, and easier to approximate numerically, using the classical scheme based on optimizing the Fourier coefficients of support functions such as in \cite[Section 5]{Ftouhi_zuazua}.  Nevertheless, as we shall see in the present paper, the situation is trickier as in the planar setting the solutions are proven to present some singularities since their boundaries are shown to contain polygonal parts. 

The first main result of the paper is concerned with proving the $\Gamma-$convergence of the problems $(\p_p)$ to the problem of minimizing the Hausdorff distance \eqref{prob:hausdorff_zuazua}. This result is proved for any dimension. 
\begin{theorem}\label{th:main_gamma}
Let $\Omega\subset\mathbb{R}^n$ be a convex body, $p\in[1,+\infty)$ and $c\in[0,|\Omega|]$. 
\begin{itemize}
    \item The problem $(\p_p)$ admits solutions. \vspace{1mm} 
    \item The sequence of functionals $(\J_p)$ $\Gamma$-converges to $\J_\infty$ when $p$ tends to $+\infty$. Therefore,
    $$\lim\limits_{p\rightarrow +\infty} \sigma_p = \sigma_\infty :=\inf\{d^H(\omega,\Om)\ |\ \omega\subset \Om\ \text{is convex and}\ |\om|=c \}$$
    and every accumulation point, with respect to the Hausdorff distance $d^H$,  of solutions of Problems $(\p_p)$ solves problem $$    \min\{d^H(\om,\Om)\ |\ \omega\subset \Om\ \text{is convex and}\ |\om|=c \}.$$
\end{itemize}

\end{theorem}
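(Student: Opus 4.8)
The plan is to prove Theorem~\ref{th:main_gamma} in three stages: existence for fixed $p$, the $\Gamma$-convergence statement, and then the convergence of minimizers and minimal values as consequences. Throughout, the right framework is to parametrize convex subsets $\omega \subset \Omega$ by their support functions $h_\omega$, using the fact that a function $h$ on $\S^{n-1}$ is the support function of a convex body contained in $\Omega$ precisely when $h \le h_\Omega$ and $h$ admits a convex $1$-homogeneous extension to $\R^n$ (equivalently, $h + \partial_{\theta\theta} h \ge 0$ in the distributional sense in the planar case). The volume constraint $|\omega| = c$ and containment $\omega \subset \Omega$ are both closed under Hausdorff convergence, and Hausdorff convergence of convex bodies is equivalent to uniform convergence of support functions, so the natural topology is the $C^0(\S^{n-1})$ topology on support functions.

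\medskip

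\textbf{Existence for fixed $p$.} I would take a minimizing sequence $(\omega_k)$ for $(\p_p)$. Since every $\omega_k \subset \Omega$, the sequence is uniformly bounded, so by Blaschke's selection theorem a subsequence converges in Hausdorff distance to a convex body $\omega_\infty \subset \Omega$. Containment passes to the limit, and the volume functional $|\cdot|$ is continuous with respect to Hausdorff convergence on convex bodies, so $|\omega_\infty| = c$ is preserved; hence $\omega_\infty$ is admissible. Finally $\J_p$ is continuous for the uniform topology on support functions (the integrand $|h_\Omega - h_\omega|^p$ converges uniformly, and $\S^{n-1}$ has finite measure), so $\J_p(\omega_\infty) = \lim_k \J_p(\omega_k) = \sigma_p$, giving a minimizer.

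\medskip

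\textbf{$\Gamma$-convergence.} Fix the ambient metric space to be the admissible convex bodies with the Hausdorff distance. I must verify the two standard $\Gamma$-convergence conditions for $\J_p \to \J_\infty$ as $p \to +\infty$. For the $\liminf$ inequality: if $\omega_p \to \omega$ in Hausdorff distance, then since the measure of $\S^{n-1}$ is a fixed finite constant $\kappa := \H^{n-1}(\S^{n-1})$, Jensen/Hölder gives $\|g\|_q \le \kappa^{1/q - 1/p}\|g\|_p$ for $q \le p$, and letting $p \to \infty$ yields the comparison needed; more directly, for a sequence of exponents $p \to \infty$ and $g_p := h_\Omega - h_{\omega_p} \to g := h_\Omega - h_\omega$ uniformly, one shows $\liminf_p \|g_p\|_p \ge \|g\|_\infty$ by evaluating near a point where $|g|$ nearly attains its maximum and using the lower bound $\|g_p\|_p \ge (\text{measure of a small cap})^{1/p}\,(\max|g_p| - \eps)$, then sending $p \to \infty$ so the measure factor tends to $1$. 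For the $\limsup$ (recovery sequence) inequality: take the constant recovery sequence $\omega_p := \omega$, and use $\|g\|_p \le \kappa^{1/p}\|g\|_\infty \to \|g\|_\infty$, so $\limsup_p \J_p(\omega) \le \J_\infty(\omega)$. These two inequalities give $\Gamma$-convergence.

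\medskip

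\textbf{Convergence of values and minimizers.} The fundamental theorem of $\Gamma$-convergence then yields the remaining claims, provided I supply the equicoercivity that it requires: all admissible sets live inside the fixed compact (for Hausdorff distance) family of convex subbodies of $\Omega$ with prescribed volume, so any sequence of minimizers $(\omega_p)$ is precompact by Blaschke, and the constraint set is the same for all $p$. Equicoercivity plus $\Gamma$-convergence give $\lim_{p\to\infty}\sigma_p = \sigma_\infty$ and that every Hausdorff-accumulation point of minimizers of $(\p_p)$ is a minimizer of the Hausdorff problem. I expect the main obstacle to be the $\liminf$ inequality in the $\Gamma$-convergence step: it is the one place where uniform convergence of support functions alone is not formally enough, and one must argue carefully that the $L^p$ norms do not lose mass relative to the $L^\infty$ norm in the limit — the small-cap lower bound above is the key technical device, and it requires that the set where $|g|$ is within $\eps$ of its maximum has positive $\H^{n-1}$-measure, which holds because $g$ is continuous. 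A minor subtlety to address is that the admissible class may be empty or degenerate for the boundary values $c = 0$ and $c = |\Omega|$, where the statement should be read as trivially true.
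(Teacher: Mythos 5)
Your proposal is correct and follows essentially the same route as the paper: parametrization by support functions, Blaschke selection plus continuity of the volume and of $\J_p$ for existence, the constant recovery sequence for the $\limsup$ inequality, and equicoercivity of the fixed compact class $\K_c$ together with the fundamental theorem of $\Gamma$-convergence for the convergence of values and minimizers. The only (cosmetic) difference is in the $\liminf$ step: the paper proves the stronger continuous convergence $\J_p(\omega_p)\to\J_\infty(\omega)$ by splitting off $\|h_{\omega_p}-h_\omega\|_p$ and invoking the classical fact that $\|g\|_p\to\|g\|_\infty$ on a finite measure space, whereas you unwind that classical fact directly via the small-cap lower bound — both arguments are valid and equivalent in substance.
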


In the planar case, we are able to prove the following structural result on the boundary of the optimal shapes: 
\begin{theorem}\label{th:main_gamma_2}
Let $\Omega\subset\mathbb{R}^2$ be a planar convex body, $p\in[1,+\infty)$ and $c\in[0,|\Omega|]$. 
If $\om^*$ is a solution of the problem $(\p_p)$, then the free part of its boundary, i.e., $\partial \om^*\backslash \partial \Om$, is the union of polygonal lines. In particular, if the container $\Omega$ is a polygon then $\omega^*$ is also a polygon. 
\end{theorem}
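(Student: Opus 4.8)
The plan is to work entirely with support functions on $\S^1$, writing $g:=h_\Om$ and $h:=h_{\om^*}$, and to exploit the planar curvature calculus. Recall that in the plane $\partial\om^*$ is parametrized through $h$, that its radius of curvature is the nonnegative measure $\mu:=h+h''\ge 0$, and that $\om^*$ is locally a polygon exactly when $\mu$ is locally a finite sum of Dirac masses: each atom of $\mu$ is an edge (located at its outer normal), while on each arc where $h+h''=0$ one has $h(\theta)=a\cos\theta+b\sin\theta$, the support function of a single point, i.e.\ a vertex. Using the area functional $|\om|=\tfrac12\int_{\S^1}(h^2-h'^2)\,d\theta$ together with its first variation $\phi\mapsto\int_{\S^1}(h+h'')\phi\,d\theta$ and the containment constraint $h\le g$, the theorem reduces to the following claim: on the \emph{free set of directions} $F:=\{\theta:\ h(\theta)<g(\theta)\}$, an open subset of $\S^1$, the curvature measure $\mu$ has no absolutely continuous (nor diffuse singular) part and only locally finitely many atoms.

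First I would write the first-order optimality conditions for $(\p_p)$ in terms of $h$, introducing a scalar multiplier $\lambda$ for the area constraint and nonnegative multiplier measures for the two inequality constraints $g-h\ge 0$ and $h+h''\ge 0$, supported respectively on $\{h=g\}$ and $\{h+h''=0\}$ by complementary slackness. On the portion of $F$ where the convexity constraint is slack, i.e.\ on $F\cap\{h+h''>0\}$, both measures vanish and the Euler--Lagrange relation collapses to
\[
\lambda\,(h+h'')=p\,(g-h)^{p-1}.
\]
Since $g-h>0$ and $h+h''>0$ there, this forces $\lambda>0$; moreover the right-hand side is continuous, so $h\in C^2$ on such an arc and $\om^*$ is \emph{strictly convex} (genuinely curved) there. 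The goal is then to show this situation is impossible, i.e.\ that the free boundary is flat wherever it does not touch $\partial\Om$.

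The main obstacle, and the heart of the proof, is to rule out such curved free arcs; I would do this through the \emph{second-order} necessary optimality condition. On a curved arc $I\subset F$ the constraints $h<g$ and $h+h''>0$ are strictly inactive, so any $\phi\in C_c^\infty(I)$ with $\int_{\S^1}(h+h'')\phi\,d\theta=0$ lies in the critical cone, and minimality requires the Lagrangian second variation to be nonnegative:
\[
\int_I p(p-1)(g-h)^{p-2}\phi^2\,d\theta+\lambda\int_I(\phi^2-\phi'^2)\,d\theta\ \ge\ 0.
\]
Here the decisive term is $-\lambda\int_I\phi'^2$: since $\lambda>0$, choosing high-frequency admissible test functions $\phi_N$ (for instance $\phi_N=\sin(N\theta)$ corrected by a fixed low-frequency bump to enforce the single linear area condition) makes $\int_I\phi_N'^2\to+\infty$ while $\int_I\phi_N^2$ stays bounded, so the quadratic form tends to $-\infty$. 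This contradicts optimality, showing that $F\cap\{h+h''>0\}$ is empty. Hence $h+h''=0$ Lebesgue-almost everywhere on $F$, the curvature of the free boundary is carried by a singular measure, and between its atoms $h$ is a pure sinusoid $a\cos\theta+b\sin\theta$; this yields vertices joined by edges, i.e.\ polygonal lines. Two technical points remain to be cleaned up: excluding a possible singular-continuous part of $\mu$ on $F$ (via the same instability localized to sets of positive curvature, or a direct measure-theoretic argument), and proving that the atoms do not accumulate on compact subarcs, so that each connected free component is a genuine locally finite polygonal line.

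Finally, for the case where $\Om$ is a polygon, the complement $\{h=g\}$ of $F$ in $\S^1$ corresponds to contact with $\partial\Om$; since $\partial\Om$ consists of finitely many segments, the contact part of $\partial\om^*$ is itself a finite union of segments lying on the edges of $\Om$. Combining this with the polygonal structure of the free part, and using that $\Om$ contributes only finitely many edge normals, one concludes that $\mu$ is a finite sum of atoms on all of $\S^1$, so $\om^*$ is a polygon. I expect the constraint qualification needed to produce the multiplier $\lambda$, and the low-regularity justification of the second variation (legitimate precisely because $h\in C^2$ on the curved arc), to be routine compared with the high-frequency instability argument, which is the genuinely novel ingredient and the reason finite $p$ behaves so differently from the Hausdorff ($p=\infty$) case.
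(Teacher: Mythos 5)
Your route is genuinely different from the paper's in its architecture, even though it runs on the same engine. The paper never writes Euler--Lagrange conditions for $(\p_p)$ directly: it first invokes Proposition \ref{prop:equivalence_p} (itself an application of Theorem \ref{th:main_equivalence}) to swap objective and constraint, recasting $(\p_p)$ as the minimization of the area $j(h)=\tfrac12\int(h^2-h'^2)$ subject to $\int(h_\Om-h)^p\,d\theta$ being fixed, and then verifies the two hypotheses of \cite[Theorem 2.9]{lnp} --- the constraint functional has second derivative controlled by $\|v\|_{H^s}^2$, while the objective satisfies $j''(h)(v,v)\le -C|v|_{H^1}^2+C\|v\|_{H^s}^2$ --- and cites that theorem as a black box. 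You instead keep the area as a constraint, introduce a multiplier $\lambda$, and exhibit the high-frequency $H^1$ instability of $\lambda\int(\phi^2-\phi'^2)$ by hand. That instability is exactly the mechanism inside the cited theorem, so your proposal is best read as an attempt to re-prove the relevant special case of \cite[Theorem 2.9]{lnp} directly, bypassing the equivalence step. What the paper's detour buys is precisely that it puts the problem into the exact form to which the reference applies (objective concave at the $H^1$ level, constraint of lower order), so that all the measure-theoretic work is already done.

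The gap is that the two items you defer as ``technical points to be cleaned up'' are not cleanup: they are the substance of \cite[Theorem 2.9]{lnp}. Your second-variation argument, as written, only excludes free arcs on which the curvature $\mu=h+h''$ is a strictly positive \emph{continuous function} and $h$ is $C^2$; for a general convex body $\mu$ is merely a nonnegative Radon measure, $\{\mu>0\}$ has no pointwise meaning, and the dangerous scenarios --- an absolutely continuous part positive only on a nowhere-dense set of positive measure, a singular continuous part, or atoms accumulating inside a free arc --- are untouched by testing against smooth $\phi_N$ supported where everything is smooth and slack. Each of these scenarios still produces a non-polygonal free boundary, so without handling them you have not proved the statement. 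Likewise the bootstrap ``the right-hand side is continuous, so $h\in C^2$'' presupposes a pointwise Euler--Lagrange identity between a measure and a function, which needs justification when the convexity multiplier is itself only a measure, and the existence of $\lambda$ requires a constraint qualification that degenerates at $c=0$. None of these is fatal to the strategy --- they are resolved in \cite{lnp} by working with the quadratic form on the full critical cone in a fractional Sobolev scale rather than on smooth slack arcs --- but as it stands your argument proves a strictly weaker statement than the theorem. The final step (polygonal container implies polygonal optimizer) is fine and matches the remark following the paper's proof.
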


Theorem \ref{th:main_gamma_2} is restricted to the planar case because its proof relies on results of \cite{lnp}, where the authors provide sufficient conditions on the cost functionals  guaranteeing the presence of polygonal parts in the boundary of optimal sets. Such results are, up to our knowledge, still not available in higher dimensions due to non-trivial technical challenges as explained in \cite{zbMATH06548314}.

The proof relies on combining the arguments of \cite{lnp} with the following important technical result that we formulate in a general setting for possible use in other related problems. 

\begin{theorem}\label{th:main_equivalence}
  Let $J$ and $F$ be two shape functionals and $\mathcal{C}$ a given class of subsets of $\R^n$ endowed with a distance $\delta$. Set $I:=\{ F(\Om)\ |\ \Om\in \C \}$ and assume that: 
  \begin{enumerate}[label=(\Alph*)]
      \item\label{hyp11} The class $\mathcal{C}$ is non-empty and compact with respect to the distance $\delta$.  
      \item\label{hyp22} The functionals $F$ and $J$ are not constant and are continuous on $\C$ with respect to the distance $\delta$.   
      \item\label{hyp33} For every $\Om\in\C$, there exists $\eps_\Om>0$ and a continuous map $$\Psi_\Om:x\in [\inf I,\sup I]\cap (F(\Om)-\eps_\Om,F(\Om)+\eps_\Om)\longmapsto \Psi_\Om(x)\in \C$$ such that
      $$\Psi_\Om(F(\Om))=\Om \ \ \text{and}\ \ \forall x\in [\inf I,\sup I]\cap (F(\Om)-\eps_\Om,F(\Om)+\eps_\Om),\ \ F(\Psi_\Om(x)) = x.$$
      \item\label{hyp44} A set $\Om\in \C$ such that $F(\Om)<\sup I$ cannot be a local minimizer of $J$ in $(\mathcal{C},\delta)$.
  \end{enumerate}

  Under these assumptions, we have the following properties: 

  \begin{itemize}
  \item The set $I$ is a closed interval with non-empty interior: it is exactly given by the closure of the interval $(\inf I, \sup I)$. \vspace{1mm}
  \item For every $x\in I$, the problem 
  $$\inf\{J(\Om)\ |\ \Om\in \mathcal{C}\ \text{and}\ F(\Om)=x\}$$ admits solutions.\vspace{1mm}
  \item The function  
  \begin{equation}\label{eq:f}
    f:x\in I\longrightarrow \min\{J(\Om)\ |\ \Om\in \mathcal{C}\ \text{and}\ F(\Om)=x\}  
  \end{equation}
  is continuous and strictly decreasing.\vspace{1mm} 
  \item The following problems are equivalent: 
 \begin{itemize}
    \item  $\min\{J(\Om)\ |\ \Om\in \C\ \text{and}\ F(\Om) = x\}$,
    \item $\min\{J(\Om)\ |\ \Om\in \C\ \text{and}\ F(\Om) \leq x\}$,
    \item $\min\{F(\Om)\ |\ \Om\in \C\ \text{and}\ J(\Om) = f(x)\}$,
    \item $\min\{F(\Om)\ |\ \Om\in \C\ \text{and}\ J(\Om) \leq f(x)\}$.
\end{itemize}   

 \end{itemize}   
\end{theorem}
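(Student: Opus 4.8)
The plan is to squeeze everything out of the four hypotheses in the order: first identify the range $I$, then set up the value function $f$ in \eqref{eq:f}, analyse its monotonicity by converting \ref{hyp44} into a statement about $f$, and finally read off the equivalences as bookkeeping. Throughout I write $a:=\inf I$ and $b:=\sup I$. To begin, I would show $I=[a,b]$. Since $\C$ is compact by \ref{hyp11} and $F$ is continuous by \ref{hyp22}, the image $I=F(\C)$ is compact, hence closed and bounded, so $a,b\in I$, and $a<b$ because $F$ is not constant. The role of \ref{hyp33} is that the local section $\Psi_\Om$ forces $[a,b]\cap(F(\Om)-\eps_\Om,F(\Om)+\eps_\Om)\subseteq I$, since $F(\Psi_\Om(x))=x$ exhibits every such $x$ as a value of $F$; thus $I$ is relatively open in $[a,b]$. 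Being simultaneously closed, nonempty, and open in the connected space $[a,b]$, it equals $[a,b]$, which is the closure of $(a,b)$. Existence at a fixed level is then immediate: for $x\in I$ the feasible set $F^{-1}(\{x\})$ is a closed subset of the compact $\C$, hence compact, and nonempty since $x\in I$, so the continuous $J$ attains a minimum on it and $f(x)$ is well defined.

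Next I would prove $f$ continuous on $I$. For upper semicontinuity I use \ref{hyp33}: if $\Om_x$ realises $f(x)$, then $\Psi_{\Om_x}(x)=\Om_x$ and for $x'\to x$ the competitor $\Psi_{\Om_x}(x')$ is feasible at level $x'$ and converges to $\Om_x$ in $\delta$, so $f(x')\le J(\Psi_{\Om_x}(x'))\to J(\Om_x)=f(x)$. For lower semicontinuity I use compactness: given $x_n\to x$ with minimizers $\Om_n$, pass to a $\delta$-convergent subsequence $\Om_{n_k}\to\Om$ along which $f(x_{n_k})\to\liminf_n f(x_n)$; then $F(\Om)=x$ and $J(\Om)=\liminf_n f(x_n)$ by continuity of $F$ and $J$, and feasibility of $\Om$ gives $f(x)\le\liminf_n f(x_n)$. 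Together these yield continuity.

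The heart of the argument, and the step I expect to be the main obstacle, is strict monotonicity, because this is exactly where \ref{hyp44} must be transported from a statement about minimizers of $J$ to a statement about the value function $f$. The \emph{bridge} I would use is: if $c\in I$ is a local minimizer of $f$ relative to $I$, then any $\Om_c$ realising $f(c)$ is a local minimizer of $J$ in $(\C,\delta)$. Indeed, for $\Om$ that is $\delta$-close to $\Om_c$ one has $F(\Om)\in I$ close to $c$, so $J(\Om)\ge f(F(\Om))\ge f(c)=J(\Om_c)$, where the first inequality is the definition of $f$ and the second is the assumed local minimality of $c$ (at the endpoints the constraint $F(\Om)\in I$ confines the perturbation to the relevant side, so the one-sided minimality suffices). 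By \ref{hyp44} this is impossible as soon as $c<b$. I would exploit this twice. Monotonicity: the sublevel value $\min_{y\le x}f(y)=\min_{[a,x]}f$ is attained at some $c\in[a,x]$; if $c<x$ then $c<b$ and $c$ is a local minimizer of $f$, contradicting the bridge, so it is attained at $x$ and $f(x)=\min_{[a,x]}f$, i.e. $f$ is non-increasing. Strictness: if $f(x_1)=f(x_2)$ with $x_1<x_2$, then $f$ is constant on $[x_1,x_2]$, and any interior $c\in(x_1,x_2)$ satisfies $c<b$ and is a local minimizer of $f$, again contradicting the bridge.

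Finally, the four problems coincide as a formal consequence of $f$ being continuous and strictly decreasing, hence injective. Writing $y=F(\Om)$, strict decrease gives $J(\Om)\le f(x)\Rightarrow f(y)\le J(\Om)\le f(x)\Rightarrow y\ge x$, while $f(x)=\min_{[a,x]}f$ gives $\min\{J:F\le x\}=f(x)$ and $\min\{F:J\le f(x)\}=x$ (the level $x$ being attained by $\Om_x$). Chasing these two facts shows that, under the level correspondence $x\leftrightarrow f(x)$, a set $\Om$ solves any one of the four problems if and only if $F(\Om)=x$ and $J(\Om)=f(x)$; in particular the four solution sets coincide, which is the asserted equivalence.
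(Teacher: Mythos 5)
Your proof is correct and follows the same overall architecture as the paper's: closedness of $I$ from compactness and continuity, the interval structure from Hypothesis \ref{hyp33}, existence by compactness, continuity of $f$ split into a liminf inequality (compactness) and a limsup inequality (the local section $\Psi_{\Om_x}$), monotonicity from Hypothesis \ref{hyp44}, and the four equivalences as formal consequences of $f$ being continuous and strictly decreasing. Two local differences are worth recording. First, where the paper proves $(\inf I,\sup I)\subset I$ by a supremum/contradiction argument, you observe that Hypothesis \ref{hyp33} makes $I$ relatively open in $[\inf I,\sup I]$ and conclude by connectedness of a nonempty clopen subset; this is cleaner and fully equivalent. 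Second, and more substantively, your treatment of strict monotonicity is tighter than the paper's. The paper rules out local minima of $f$ at \emph{interior} points of $I$ and then asserts strict decrease; taken literally this does not suffice, since a continuous function with no interior local minimum need not be decreasing (it could be increasing, or behave like $-|x|$, whose only interior critical point is a maximum). Your ``bridge'' lemma --- a local minimum of $f$ at $c$ produces, via $J(\Om)\ge f(F(\Om))\ge f(c)=J(\Om_c)$, a local minimizer of $J$ with $F$-value $c<\sup I$ --- is applied to every $c<\sup I$ \emph{including the left endpoint}, where you correctly note that one-sided minimality is all that is needed because the constraint $F(\Om)\in I$ confines the perturbation to one side; this yields that $\min_{[\inf I,\,x]}f$ is attained only at $x$, hence non-increase and then strictness. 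This closes a small gap in the published argument, and your direct characterization of the common solution set as $\{\Om: F(\Om)=x,\ J(\Om)=f(x)\}$ subsumes the paper's pairwise equivalences.
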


Let us now comment on this result:
\begin{itemize}
    \item Theorem \ref{th:main_equivalence} provides sufficient conditions  implying the continuity and the monotonicity of functions defined as infima (or maxima), such as the function $f$ defined in \eqref{eq:f}. Such a result leads to the equivalence between different optimization problems,  allowing, in various cases, to simplify the initial one, by considering an equivalent  more convenient formulation. Such ideas have been considered in different frameworks such as in the study of Blaschke--Santal\'o diagrams, see for example \cite[Corollary 3.13]{ftouh}, the problem of the minimization of the Hausdorff distance treated in \cite{Ftouhi_zuazua} or the study of the equivalence of minimal time and minimal norm controls in \cite{zuazua_equivalence}.
    \item Hypothesis \ref{hyp33} of Theorem \ref{th:main_equivalence} corresponds to a perturbation property than can be expressed as follows: given a set $\Omega$ in the class $\C$, it is in general possible to continuously perturb it while remaining in $\C$, so as to increase or decrease its corresponding energy $F(\Omega)$.
    \item In the same spirit, Hypothesis \ref{hyp44} is also a perturbation property (weaker than the latter one) for the second functional $J$. This assumption is crucial for the monotonicity of the function $f$ defined in \eqref{eq:f}. 
\end{itemize}

\textbf{Outline of the paper:} The paper is organized as follows: in Section \ref{s:notations}, we introduce the notations and recall the basic properties of support functions of convex sets. The proofs of Theorems \ref{th:main_gamma}, \ref{th:main_gamma_2} and \ref{th:main_equivalence} are given in Section \ref{s:proofs}. Then, some results on the extremal cases $p=1$ and $p=\infty$ are presented in Section \ref{s:extremal}. Finally, Section \ref{s:numerics} focuses on the numerical resolution of the problems, implementing and comparing two distinct parametrization methods across various test cases.





\section{Notations and useful results}\label{s:notations}

\subsection{Notations}\label{ss:notations}
\begin{itemize}
    \item If $X$ and $Y$ are two subsets of $\R^n$, the Hausdorff distance between $X$ and $Y$ is defined as follows
    $$d^H(X,Y) = \max(\sup_{x\in X}d(x,Y),\sup_{y\in Y}d(y,X)),$$
    where $d(a,B):= \inf\limits_{b\in B}\|a-b\|$ quantifies the distance from the point $a$ to the set $B$. Note that when $\om\subset \Om$, as it is the case in the problems considered in the present paper, the Hausdorff distance is given by 
    $$d^H(\om,\Om) := \sup_{x\in\Om} d(x,\om).$$
     \item If $\Om$ is a convex set, then $h_\Om$ corresponds to its support function.     \item For $p\in[1,+\infty)$, we take $\J_p(\omega) :=  \|h_\Omega-h_\omega\|_p=\left(\int_{\mathbb{S}^{n-1}} |h_\Omega-h_\omega|^p d\mathcal{H}^{n-1}\right)^{\frac{1}{p}}$. \vspace{1mm} \item $\J_\infty(\omega) := d^H(\omega,\Omega) = \|h_\Omega-h_\omega\|_\infty = \max_{\theta\in\mathbb{S}^{n-1}}|h_\Om(\theta)-h_\om(\theta)|$.

    \item Given a convex set $\Om$ and $t\ge 0$, we denote by $\Om_{-t}$ its inner parallel set at the distance $t$, which is defined by 
    $$\Om_{-t}:=\{x\ |\ d(x,\partial \Om)\ge t\}.$$
    \item Given a convex set $\Om$ and $c\in[0,|\Omega|]$, we denote by $\K_c$ the class of convex bodies of measure $c$ included in $\Omega$.
    \item The Minkowski sum of two convex sets $\Om_1$ and $\Om_2$ is given by 
    $$\Om_1+ \Om_2 := \{x+y\ |\ x\in\Om_1\ \text{and}\ y\in\Om_2\}.$$
    \item $H^1_{\text{per}}(0,2\pi)$ is the set of $H^1$ functions that are $2\pi$-periodic.
\end{itemize}

\subsection{Properties of support functions in the planar case}\label{ss:support}
According to the definition of the support function given in \eqref{def:support_intro}, in the planar case, the support function can be defined as follows: 
\begin{definition}\label{def:support}
The support function of a planar bounded convex set $\Om$ is defined on $[0,2\pi)$ as follows: 
$$h_\Om:[0,2\pi)\longrightarrow \sup\left\{\left\langle \binom{\cos{\theta}}{\sin{\theta}}, y \right\rangle\ |\ y\in\Om\right\}.$$
\end{definition}

\begin{figure}[!h]
    \centering
    \includegraphics[scale=.8]{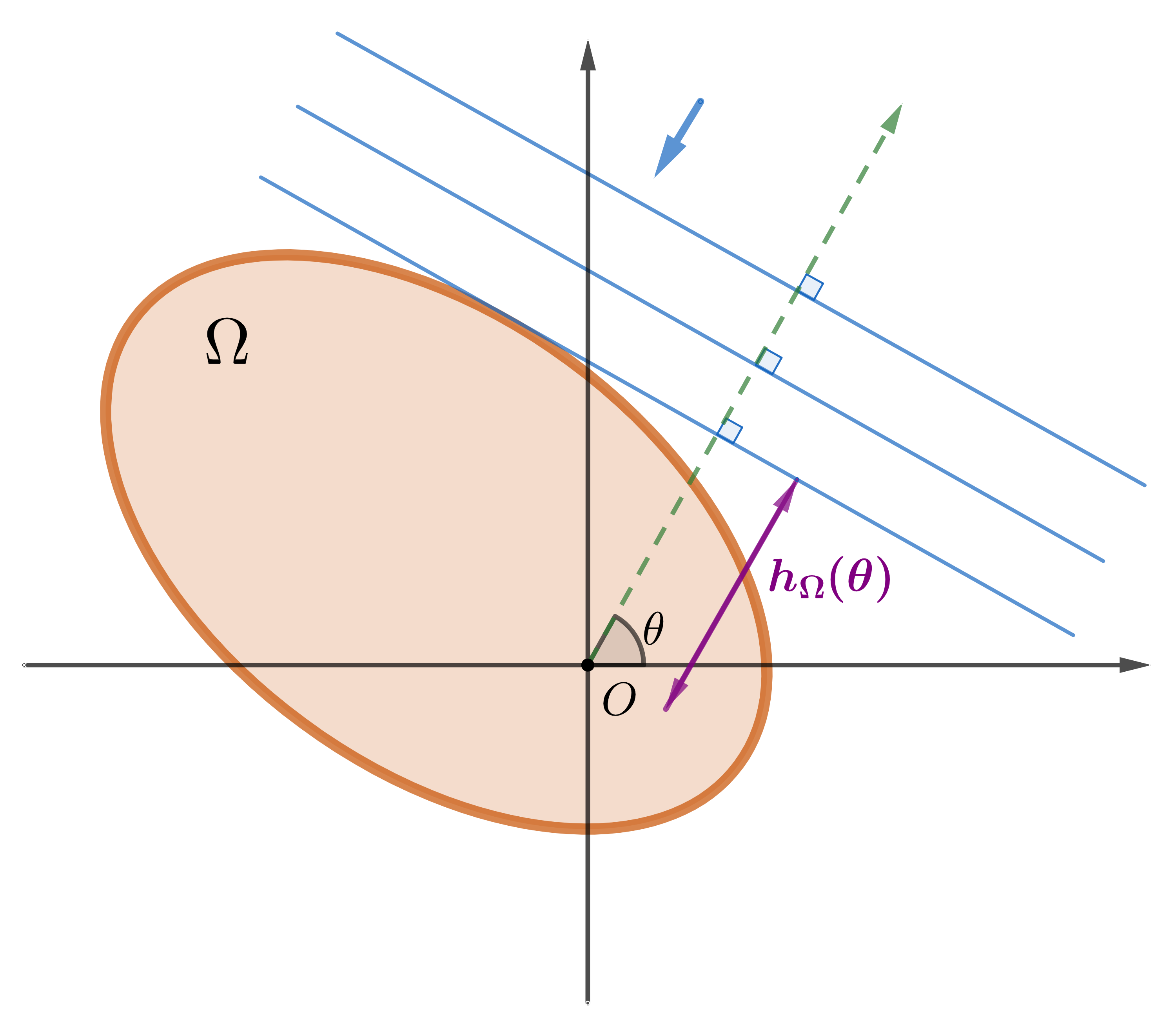}
    \caption{The support function of the convex $\Omega$.}
    \label{fig:support_function}
\end{figure}

The support functions of planar convex sets have some interesting properties: 
    \begin{itemize}
        \item They allow to provide a simple criterion of the convexity of $\Om$. Indeed, $\Om$ is convex, if and only if, $h_\Om''+h_\Om\ge 0 $ in the sense of distributions, see for example \cite[Formula (2.60)]{schneider}.  
        \item They behave linearly for the Minkowski sum and dilatation. Indeed, if $ \Om_1$ and $\Om_2$ are two convex bodies and $\alpha, \beta >0$, we have  
    $$h_{\alpha \Om_1 + \beta \Om_2}= \alpha h_{\Om_1}+ \beta h_{\Om_2},$$
    see \cite[Section 1.7.1]{schneider}.
        \item They allow to parametrize the inclusion in a simple way. Indeed, if $\Om_1$ and $\Om_2$ are two convex sets, we have 
        $$\Om_1\subset \Om_2\Longleftrightarrow h_{\Om_1}\leq h_{\Om_2}.$$
        \item They also provide elegant formulas for some relevant geometric quantities. For example, the perimeter and the area of a planar convex body $\Om$ are respectively given by 
        $$P(\Om) = \int_0^{2\pi}h_\Om(\theta)d\theta,  \ \ |\Om|= \frac{1}{2}\int_0^{2\pi} h_\Om (h_\Om''+h_\Om) d\theta=\frac{1}{2} \int_0^{2\pi} ({h_\Om'}^2-h_\Om^2)d\theta$$
        and the Hausdorff distance between two convex bodies $\Om_1$ and $\Om_2$ is given by 
    $$d^H(\Om_1,\Om_2)=\max_{\theta\in [0,2\pi)} |h_{\Om_1}(\theta)-h_{\Om_2}(\theta)|,$$
    see for example \cite[Lemma 1.8.14]{schneider}. 
    \end{itemize}

\section{Proofs of Theorems \ref{th:main_gamma}, \ref{th:main_gamma_2} and \ref{th:main_equivalence}}
\label{s:proofs}

\subsection{Proof of Theorem \ref{th:main_gamma}}

In this section we present the proof of Theorem \ref{th:main_gamma} and a technical equivalence result in Proposition \ref{prop:equivalence_p}, which will play a crucial role in the proof of Theorem \ref{th:main_gamma_2}.  

For convenience of the reader, before presenting the proof, let us recall the statement of Theorem \ref{th:main_gamma}:
\begin{theorem}\label{prop:gamma}
Let $\Om\subset \R^n$ be a convex body. For every $p\in[1,+\infty)\cup \{+\infty\}$ and $c\in[0,|\Om|]$, the problem
$$(\p_p):\ \ \ \ \ \sigma_p := \inf\{\J_p(\omega)\ |\ \omega\subset \Om\ \text{is convex and}\ |\om|=c \}$$ 
admits solutions. 

Moreover, the sequence of functionals $(\J_p)$ $\Gamma$-converges to $\J_\infty$ as $p$ tends to $+\infty$. Therefore
\begin{itemize}
    \item $\lim\limits_{p\rightarrow +\infty} \sigma_p = \sigma_\infty.$ 
    \item Every accumulation point, with respect to the Hausdorff distance, of solutions to problems $(\p_p)$ is a solution of $(\p_\infty)$. 
\end{itemize} 
\end{theorem}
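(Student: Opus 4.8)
The plan is to obtain existence by the direct method in a well-chosen compact class, and to deduce the asymptotics $p\to+\infty$ from the $\Gamma$-convergence of $(\J_p)$ to $\J_\infty$ together with the compactness of that class.

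First I would fix the admissible class $\K_c$ of convex subsets of $\Om$ of measure $c$, equipped with the Hausdorff distance $d^H$. The structural facts I need are that $\K_c$ is nonempty and $d^H$-compact, and that each $\J_p$ (including $p=\infty$) is $d^H$-continuous on it. Nonemptiness follows by applying the intermediate value theorem to the continuous map $t\mapsto |\Om_{-t}|$, which decreases from $|\Om|$ to $0$ over the inner parallel sets. Compactness follows from Blaschke's selection theorem, by which the nonempty compact convex subsets of the fixed bounded set $\Om$ form a $d^H$-compact family, combined with the continuity of volume under Hausdorff convergence of convex sets, so that the constraint $|\om|=c$ cuts out a closed, hence compact, subset. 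For continuity I would use that Hausdorff convergence of convex sets is equivalent to uniform convergence of their support functions: if $\om_k\to\om$ in $d^H$ then $h_{\om_k}\to h_\om$ uniformly on $\S^{n-1}$, and by the reverse triangle inequality $|\,|h_\Om-h_{\om_k}|-|h_\Om-h_\om|\,|\le |h_{\om_k}-h_\om|$ the integrands $|h_\Om-h_{\om_k}|^p$ converge uniformly; since $\S^{n-1}$ has finite measure, $\J_p(\om_k)\to\J_p(\om)$, while the case $p=\infty$ is immediate as $\J_\infty=\|h_\Om-h_\om\|_\infty$. Existence for every $p\in[1,+\infty]$ then follows by minimizing a continuous functional over a compact set.

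For the asymptotics I would verify the two $\Gamma$-convergence inequalities on $(\K_c,d^H)$ along an arbitrary sequence $p_k\to+\infty$. The recovery (limsup) inequality is obtained with the constant sequence: writing $g=|h_\Om-h_\om|$ and $M=\H^{n-1}(\S^{n-1})$, one has $\J_p(\om)\le M^{1/p}\|g\|_\infty$, so $\limsup_{k}\J_{p_k}(\om)\le\|g\|_\infty=\J_\infty(\om)$; in fact $\J_{p_k}(\om)\to\J_\infty(\om)$ by the classical convergence of $L^p$ norms to the $L^\infty$ norm on a finite measure space. For the liminf inequality, let $\om_k\to\om$ in $d^H$ and set $g_k=|h_\Om-h_{\om_k}|$, $g=|h_\Om-h_\om|$, so $g_k\to g$ uniformly. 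Put $m=\|g\|_\infty=\J_\infty(\om)$; the inequality is trivial if $m=0$, so assume $m>0$. Fixing $\eta\in(0,m)$, continuity of $g$ yields an open set $U\subset\S^{n-1}$ with $\H^{n-1}(U)=:\del>0$ on which $g>m-\eta$, and uniform convergence gives $g_k>m-2\eta$ on $U$ for $k$ large; hence $\J_{p_k}(\om_k)\ge\big(\int_U g_k^{p_k}\,d\H^{n-1}\big)^{1/p_k}\ge (m-2\eta)\,\del^{1/p_k}$. Since $\del^{1/p_k}\to1$, letting $k\to\infty$ and then $\eta\to0$ gives $\liminf_{k}\J_{p_k}(\om_k)\ge m=\J_\infty(\om)$.

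Finally I would invoke the fundamental theorem of $\Gamma$-convergence: since $(\K_c,d^H)$ is compact the family $(\J_p)$ is automatically equicoercive, so $\Gamma$-convergence yields both $\sigma_p\to\sigma_\infty$ and the fact that every $d^H$-accumulation point of a family of minimizers of $(\p_p)$ minimizes $\J_\infty$, the existence of such accumulation points being guaranteed by compactness. I expect the liminf inequality to be the only genuinely delicate point, since there the integrand $g_k$ and the exponent $p_k$ vary simultaneously; the argument hinges on localizing near a maximizer of the limit $g$, exploiting uniform convergence on a set of fixed positive measure $\del$, and using $\del^{1/p_k}\to1$ to neutralize the measure factor. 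The remaining ingredients, namely Blaschke compactness, continuity of volume and of $\J_p$, and the direct method, are standard.
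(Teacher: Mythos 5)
Your proposal is correct and follows the same overall architecture as the paper: existence by the direct method on the compact class $(\K_c,d^H)$, the constant recovery sequence for the $\Gamma$-limsup, and the fundamental theorem of $\Gamma$-convergence (with equicoercivity free from compactness) to conclude both $\sigma_p\to\sigma_\infty$ and the convergence of minimizers.

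The one place where you genuinely diverge is the $\Gamma$-liminf inequality. The paper proves the stronger statement of continuous convergence, $\J_p(\om_p)\to\J_\infty(\om)$ for \emph{every} sequence $\om_p\to\om$ in $d^H$, via the single splitting
$|\J_p(\om_p)-\J_\infty(\om)|\le \|h_{\om_p}-h_\om\|_p+\bigl|\,\|h_\Om-h_\om\|_p-\|h_\Om-h_\om\|_\infty\bigr|$,
where the first term is controlled by the Hausdorff distance and the second is the classical $L^p$-to-$L^\infty$ convergence for the \emph{fixed} function $h_\Om-h_\om$; this decouples the varying set from the varying exponent and yields both $\Gamma$-inequalities at once. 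You instead prove only the liminf, by localizing on a level set $U=\{g>m-\eta\}$ of positive measure $\del$ and using $\del^{1/p_k}\to 1$; this is essentially a re-proof of the fixed-function $L^p\to L^\infty$ fact made robust to the simultaneous variation of $g_k$, so it rests on the same two ingredients but is organized differently. Both arguments are valid; the paper's is shorter, yours makes the mechanism of the liminf more explicit. (Two cosmetic points: you should take $\eta<m/2$ so that $m-2\eta>0$ before raising to the power $p_k$, and note that the paper's own intermediate bound $\|h_{\om_p}-h_\om\|_p\le\|h_{\om_p}-h_\om\|_\infty$ silently normalizes away the factor $\H^{n-1}(\S^{n-1})^{1/p}$, which your $M^{1/p}$ handles correctly.)
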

\begin{proof}
The existence of optimal shapes is quite standard and is obtained by following the fundamental method of calculus of variation. Let us then focus on the proof of the $\Gamma$-convergence. 

Let $(\om_p)$ be a sequence of elements of the class $\K_c$, whichconverges to a certain $\om$ with respect to the Hausdorff distance. We have 
\begin{equation}\label{eq:convergence_gamma}
    \lim\limits_{p\rightarrow +\infty} \J_p(\omega_p) = \J_\infty(\omega).
\end{equation}
Indeed, 
\begin{align*}
|\J_p(\omega_p)-\J_\infty(\omega)| &\leq   |\J_p(\omega_p)-\J_p(\omega)| + |\J_p(\omega)-\J_\infty(\omega)|  \\
&= \big| \|h_\Om-h_{\om_p}\|_p - \|h_\Om-h_{\om}\|_p\big| + \big| \|h_\Om-h_{\om}\|_p - \|h_\Om-h_{\om}\|_\infty\big| \\
&\leq \|h_{\om_p}-h_\om\|_p + \big| \|h_\Om-h_{\om}\|_p - \|h_\Om-h_{\om}\|_\infty\big| \\
&\leq \|h_{\om_p}-h_\om\|_\infty + \big| \|h_\Om-h_{\om}\|_p - \|h_\Om-h_{\om}\|_\infty\big|\\
&= d^H(\om_p,\om) + \big| \|h_\Om-h_{\om}\|_p - \|h_\Om-h_{\om}\|_\infty\big|\\
&\underset{p\rightarrow +\infty}{\longrightarrow} 0. 
\end{align*}
We are now in position to check the classic $\Gamma$-convergence conditions:
\begin{itemize}
    \item For every sequence $(\om_p)$ of elements of $\K_c$, which converges to $\om$ with respect to the Hausdorff distance, by \eqref{eq:convergence_gamma} we have
    $$\J_\infty(\om) = \lim\limits_{p\longrightarrow +\infty} \J_p(\om_p) =\liminf\limits_{p\longrightarrow +\infty} \J_p(\om_p)$$ 
    \item On the other hand, for every $\om\in\K_c$, we consider the (constant) sequence $(\omega_p):=(\om)$. It trivially converges to $\om$. We have 
    $$\J_\infty(\om) = \lim\limits_{p\longrightarrow +\infty} \J_p(\om) =\lim\limits_{p\longrightarrow +\infty} \J_p(\om_p)=\limsup\limits_{p\longrightarrow +\infty} \J_p(\om_p).$$ 
\end{itemize}
Therefore, the sequence of functionals $(\J_p)$ $\Gamma$-converges to $\J_\infty$ when $p$ tends to $+\infty$. Moreover, this sequence is equi-coercive on the metric space $(\K_c,d^H)$, which is compact with respect to the Hausdorff distance, and for all $m\in \R$ the class $\K_c$, contains the sublevel set $\{\om\in\K_c\ |\ \J_p(\om)\leq m\}$. We then conclude by using the fundamental theorem of $\Gamma$-convergence, see for example \cite[Theorem 1.3.1]{gamma}. 
\end{proof}

Let us now state the following property that provides the equivalence between Problem $(\p_p)$ and another one that falls within the framework of \cite{lnp}. This allows in Theorem \ref{th:main_gamma_2} to prove the existence of polygonal parts in the boundary of optimal sets. 

\begin{proposition}\label{prop:equivalence_p}
  For every $p\in[1,+\infty)\cup\{+\infty\}$ and $c\in[0,|\Omega|]$, Problem  $(\p_p)$ is equivalent to the problem
$$\min\{\ |\om|\ |\ \text{$\om$ is convex, included in $\Om$ and}\  \J_p(\om)=d_c\},$$
where $d_c$ is a constant depending on $c$. 
\end{proposition}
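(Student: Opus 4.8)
The plan is to apply the general equivalence machinery of Theorem \ref{th:main_equivalence} to the concrete functionals at hand. I take the class $\C := \K_c$ (or rather the full family of convex bodies included in $\Om$, with the roles of the two functionals chosen appropriately), the distance $\del := d^H$, the functional $F(\om) := |\om|$ (the Lebesgue measure), and $J(\om) := \J_p(\om)$. The target is to show that the volume-constrained minimization $(\p_p)$, namely minimizing $\J_p$ among convex $\om \subset \Om$ with $|\om| = c$, coincides with the problem of minimizing the volume $|\om|$ among convex $\om \subset \Om$ with the value $\J_p(\om)$ fixed at the optimal level $d_c := \sigma_p$. This is precisely the equivalence of the first and third (equivalently the third/fourth) bullet items in the conclusion of Theorem \ref{th:main_equivalence}, so once the four hypotheses \ref{hyp11}--\ref{hyp44} are verified, the statement follows immediately with $d_c = f(c) = \sigma_p$.

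The main work is thus checking hypotheses \ref{hyp11}--\ref{hyp44} for this choice. For \ref{hyp11}, compactness of the class of convex subsets of the fixed body $\Om$ with respect to $d^H$ is the classical Blaschke selection theorem, and non-emptiness is clear. For \ref{hyp22}, I would note that the volume functional $|\cdot|$ is continuous for $d^H$ on convex bodies contained in a fixed bounded set, and that $\J_p$ is continuous for $d^H$ as a consequence of the estimate already established in the proof of Theorem \ref{prop:gamma} (convergence in $d^H$ gives convergence of support functions in $L^\infty$, hence in $L^p$ on $\S^{n-1}$); both are non-constant. Hypothesis \ref{hyp33} is the surjectivity/perturbation property for $F = |\cdot|$: given a convex $\om \subset \Om$ whose volume is strictly between $0$ and $|\Om|$, I would construct an explicit continuous one-parameter family that realizes nearby volumes while staying convex and inside $\Om$ --- for instance by interpolating via Minkowski combinations (or dilations) between $\om$ and a slightly larger or smaller convex set inscribed in $\Om$, using that the volume varies continuously and monotonically along such a family. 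This produces the local right-inverse $\Psi_\om$ of the volume map.

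The step I expect to be the genuine obstacle is hypothesis \ref{hyp44}: that a convex set $\om$ with $|\om| < |\Om| = \sup I$ cannot be a local minimizer of $J = \J_p$ in $(\C, d^H)$. Intuitively this says one can always strictly decrease the $p$-distance to $\Om$ by enlarging $\om$ slightly while keeping it convex and inside $\Om$; since $h_\om \le h_\Om$ pointwise with $|\om| < |\Om|$ forcing a set of positive $\H^{n-1}$-measure of directions where $h_\om < h_\Om$ strictly, any monotone enlargement (again through a Minkowski or dilation perturbation toward $\Om$) raises $h_\om$ on that set and hence strictly lowers $\int_{\S^{n-1}} |h_\Om - h_\om|^p\, d\H^{n-1}$. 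I would make this rigorous by exhibiting a perturbation $\om_t$ with $\om \subsetneq \om_t \subset \Om$, $h_{\om_t} \ge h_\om$ with strict inequality on a positive-measure set, $\om_t \to \om$ in $d^H$ as $t \to 0^+$, and $\J_p(\om_t) < \J_p(\om)$; this contradicts local minimality. Care is needed to ensure such an enlargement exists whenever $|\om| < |\Om|$ (equivalently $\om \ne \Om$), which follows because $h_\om \le h_\Om$ with $\om \ne \Om$ gives strict inequality on a set of positive measure, and one can push $\om$ toward $\Om$ along $h_{\om_t} := (1-t) h_\om + t h_\Om$, which is a legitimate support function (convexity is preserved under convex combination since $h'' + h \ge 0$ is a convex condition) of a set still contained in $\Om$.

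Once \ref{hyp44} is secured, Theorem \ref{th:main_equivalence} applies verbatim: $f$ is continuous and strictly decreasing, so the value $d_c := f(c) = \sigma_p$ is well defined, and the four-fold equivalence yields in particular that $(\p_p)$ (minimize $\J_p$ under $|\om| = c$) is equivalent to minimizing $|\om|$ under $\J_p(\om) = d_c$, which is exactly the asserted reformulation. The case $p = +\infty$ is handled identically, using the continuity of $\J_\infty = d^H(\cdot,\Om)$ for $d^H$ and the same enlargement argument for \ref{hyp44}.
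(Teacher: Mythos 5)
Your proposal is correct and follows essentially the same route as the paper: it applies Theorem \ref{th:main_equivalence} with $\C$ the class of convex subsets of $\Om$, $\delta=d^H$, $F=|\cdot|$, $J=\J_p$, verifies \ref{hyp11}--\ref{hyp22} via Blaschke selection and the Lipschitz estimate $|\J_p(\om_1)-\J_p(\om_2)|\le d^H(\om_1,\om_2)$, builds $\Psi_\om$ from monotone families (the paper uses inner parallel sets to shrink and Minkowski interpolation with $\Om$ to grow), and settles \ref{hyp44} with the same perturbation $\om_\eps=(1-\eps)\om+\eps\Om$, for which the paper's computation $\J_p(\om_\eps)=(1-\eps)\J_p(\om)$ makes your positive-measure argument exact. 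The only cosmetic difference is that the paper exploits the linearity of support functions under Minkowski combination to get the strict decrease in one line rather than via a measure-theoretic argument.
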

\begin{proof}
  We use Theorem \ref{th:main_equivalence}, with:
  \begin{itemize}
      \item $\C$ being the class of closed convex sets included in $\Om$,
      \item $\delta$ being the Hausdorff distance denoted by $d^H$,
      \item $F$ being the area functional $|\cdot|$  and $J$ being the functional $\J_p$.
  \end{itemize}

  Let us check the hypotheses of Theorem \ref{th:main_equivalence}: 
  \begin{enumerate}
      \item Hypothesis \ref{hyp11} is a direct consequence of the boundedness of $\Om$ and the Blaschke selection theorem, see for example \cite[Theorem 1.8.7]{schneider}.  \vspace{1mm}
      \item It is classical that the area functional $|\cdot|$ is continuous on the class of closed convex sets endowed with the Hausdorff distance. As for $\J_p$, one can easily check its continuity with respect to the Hausdorff distance by writing 
      \begin{align*}
|\J_p(\om_1)-\J_p(\om_1)| &=  \big|\|h_\Om-h_{\om_1}\|_p-\|h_\Om-h_{\om_1}\|_p\big|  \\
&\leq\|h_{\om_1}-h_{\om_2}\|_p \leq  \|h_{\om_1}-h_{\om_2}\|_\infty= d^H(\om_1,\om_2).
\end{align*}
      Moreover, both functionals are not constant as $\Om$ has non-empty interior. Thus, Hypothesis \ref{hyp22} is satisfied. \vspace{1mm}
      \item    Let $\om\subset \Om$ and $|\om| = x_0$. We consider the following continuous map 
    $$ \Psi_\om: x\longmapsto \Psi_\om(x) = 
\left\{\begin{matrix}
\om_{-\tau_x}\ \ \ \ \ \ \ \ \ \ \ \ \ \ \ \ \ \ \ \ \ \ \ \ \  \ \ \text{if $x\in [0,x_0]$},\vspace{2mm}
\\ 
(1-t_x)\om_{x_0}+t_x\Om
\ \ \ \ \ \ \ \ \text{if $x\in (x_0,|\Om|]$},
\end{matrix}\right.  $$
where $\tau_x$ is chosen in $\R^+$ in such a way that
$$|\om_{-\tau_x}| = x$$
and $t_x$ is chosen in $[0,1]$ such that
$$|(1-t_x)\om+t_x\Om|=x.$$
 The map $\Psi_\om$ satisfies Hypothesis \ref{hyp33}. \vspace{2mm}

 \item Let $\om\subset\Om$ such that $|\om|\in (0,|\Om|)$. The set $\om$ is then different than the container $\Om$. This yields $\J_p(\om)>0$.\\
 For $\eps>0$ sufficiently small, we consider $\om_\eps:= (1-\eps)\om + \eps \Om$, which is strictly included in $\Om$. We have 
\begin{align*}
\J_p(\om_\eps) &=  \|h_\Om - h_{\om_\eps}\|_p  = \|h_\Om - h_{(1-\eps)\om + \eps \Om}\|_p \\
&= \|h_\Om - (1-\eps)h_\om -\eps h_\Om\|_p=(1-\eps)\|h_\Om-h_\om\|_p\\ &< \|h_\Om-h_\om\|_p= \J_p(\om).
\end{align*}
 This means that $\om$ is not a local minimizer of $\J_p$ in the class $\C$ which shows that Hypothesis \ref{hyp44} is satisfied.  
  \end{enumerate}

\end{proof}

\subsection{Proof of Theorem \ref{th:main_gamma_2}}
The following result is restricted to the planar case as its proof relies on results of \cite{lnp} which, to our knowledge, are still not available in higher dimensions due to non-trivial technical challenges as explained in \cite{zbMATH06548314}.

\begin{theorem}\label{prop:polygons}
 Let $\Omega$ be a planar convex body, $p\in[1,+\infty)$ and $c\in[0,|\Om|]$. If $\om^*$ is a solution of the problem $(\p_p)$, then $\partial \om^*\cap \Om$ is the union of polygonal lines.    
\end{theorem}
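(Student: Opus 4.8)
The plan is to reduce the statement to the framework of \cite{lnp} by means of the reformulation supplied by Proposition \ref{prop:equivalence_p}. By that proposition, any solution $\om^*$ of $(\p_p)$ is also a solution of
$$\min\{\,|\om|\ :\ \om\subset\Om\ \text{convex},\ \J_p(\om)=d_c\,\},$$
so that the \emph{area} becomes the objective functional while the $p$-distance becomes an equality constraint. I would then parametrize the competitors by their support functions: writing $h=h_\om$ and recalling that convexity reads $h''+h\ge 0$ (in the sense of distributions), inclusion reads $h\le h_\Om$, and
$$|\om|=\frac{1}{2}\int_0^{2\pi}\big(h^2-(h')^2\big)\,d\theta, \qquad \J_p(\om)^p=\int_0^{2\pi}|h_\Om-h|^p\,d\theta,$$
the problem takes the form of minimizing $\int_0^{2\pi} j(\theta,h,h')\,d\theta$ with $j(\theta,u,\xi)=\frac{1}{2}(u^2-\xi^2)$, over the convex set of admissible support functions, subject to the obstacle constraint $h\le h_\Om$ and the integral constraint $\int_0^{2\pi}(h_\Om-h)^p\,d\theta=d_c^p$.

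The decisive structural feature is that the integrand of the objective is \emph{strictly concave in the derivative variable}, namely $j_{\xi\xi}=-1<0$. This is precisely the sign condition under which the results of \cite{lnp} force optimal convex shapes to be polygonal on the portions of their boundary where the obstacle constraint is inactive. I would incorporate the equality constraint $\J_p(\om)=d_c$ through a Lagrange multiplier $\Lambda$; since the constraint integrand $(h_\Om-h)^p$ depends on $h$ but not on $h'$, the effective Lagrangian $j-\Lambda(h_\Om-h)^p$ retains $j_{\xi\xi}=-1<0$, so the concavity in $\xi$ is preserved. Moreover, on the free part of the boundary one has $h<h_\Om$, whence $(h_\Om-h)^p$ is smooth there for every $p\ge 1$, which is exactly what is needed to run the local analysis of \cite{lnp}.

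With these verifications in hand, I would invoke the main result of \cite{lnp} to conclude that, on the open set of normal directions whose corresponding boundary point lies in the interior of $\Om$ (equivalently, where $h<h_\Om$), the curvature measure $h''+h$ is purely atomic; equivalently $h''+h=0$ away from a locally finite set of angles. By the correspondence between the support function and the boundary — recalling that the boundary point $X(\theta)$ with outer normal $(\cos\theta,\sin\theta)$ satisfies $X'(\theta)=(h''+h)(\theta)(-\sin\theta,\cos\theta)$ — this means precisely that the free boundary $\partial\om^*\setminus\partial\Om$ is a locally finite union of line segments, i.e. a union of polygonal lines. Finally, if $\Om$ is itself a polygon, the contact set $\partial\om^*\cap\partial\Om$ is contained in the finitely many edges of $\Om$, and combining this with the polygonal free boundary shows that $\om^*$ is a polygon.

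I expect the main obstacle to be the faithful verification of the hypotheses of \cite{lnp} in the presence of the integral constraint $\J_p=d_c$: one must justify the Lagrange multiplier rule in this convex-constrained setting, check the required regularity and qualification of the constraint functional (in particular the borderline regularity of $t\mapsto|t|^p$ when $p$ is close to $1$, harmless on the free region but to be argued), and make sure that the strict concavity in $\xi$ is what triggers atomicity of the curvature measure rather than mere $C^{1,1}$ regularity. This last point is exactly why the reformulation of Proposition \ref{prop:equivalence_p} — which turns the area into the objective and thereby supplies the sign $j_{\xi\xi}<0$ — is essential: applying \cite{lnp} directly to $\J_p$, whose integrand has $j_{\xi\xi}=0$, would not yield genuinely polygonal free boundaries.
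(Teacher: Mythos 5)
Your proposal is correct and follows essentially the same route as the paper: reduce to the area-minimization problem via Proposition \ref{prop:equivalence_p}, pass to support functions, and invoke \cite{lnp} using the concavity of the area integrand in $h'$ — which is exactly the paper's verification that $j''(h)(v,v)\le -\frac{1}{4}|v|^2_{H^1(0,2\pi)}+\frac{1}{4}\|v\|^2_{H^s(0,2\pi)}$ (property (49) of \cite[Proposition 4.10]{lnp}) together with the bound $m''(h)(v,v)\le\beta\|v\|^2_{H^s(0,2\pi)}$ on the constraint functional, yielding atomicity of $h''+h$ on $\{h<h_\Om\}$ by \cite[Theorem 2.9]{lnp}. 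The caveat you raise about $p$ close to $1$ is legitimate (for $1\le p<2$ the weight $(h_\Om-h)^{p-2}$ in $m''$ degenerates near the contact set), but it is harmless on the free region, which is where the conclusion is drawn, and the paper treats it no more carefully than you do.
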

 \begin{proof}
By Proposition \ref{prop:equivalence_p}, Problem  $(\p_p)$ is equivalent to the problem
$$\min\{|\om|\ |\ \text{$\om$ is convex, included in $\Om$ and}\  \J_p(\om)=f(c)\}$$
that can be reformulated in terms of support functions as follows
\begin{equation}\label{prob:ilias}
    \min\left\{\frac{1}{2}\int_0^{2\pi}(h^2-{h'}^2)d\theta\ \Big|\ h''+h\ge 0,\ \ h\leq h_\Om\ \ \text{and}\ \ \int_0^{2\pi} (h_\Om-h)^{p} d\theta = f(c) \right\}.
\end{equation}

Following the notations of \cite{lnp}, we set
\begin{itemize}
    \item $\mathbb{T}:=[0,2\pi)$.
    \item For every $h$ such that $h''+h\ge 0$ and $h\leq h_\Om$, 
$$\mathbb{T}_{in}(h):=\{\theta\in \mathbb{T}\ |\ h(\theta) < h_\Om(\theta)\}.$$\
    \item $m(h):= \int_0^{2\pi}(h_\Omega-h)^p d\theta.$ 
    \item $j(h):= \frac{1}{2}\int_0^{2\pi} (h^2-{h'}^2)d\theta$.
\end{itemize}

We have
$$m''(h)(v,v) = \frac{p(p-1)}{2}\int_0^{2\pi} (h_\Omega-h)^{p-2} v^2d\theta. $$
Thus 
$$\|m''(h)(v,v)\|\leq \beta \|v\|^2_{L^2(0.2\pi)} \leq \beta \|v\|^2_{H^s(0,2\pi)}.$$

On the other hand, the second order derivative of the volume is given by 
$$j''(h)(v,v) = \frac{1}{4} \left(\int_0^{2\pi} v^2 d\theta - \int_0^{2\pi} {v'}^2 d\theta\right).$$

We note that the functional $j$ satisfies property (49) of \cite[Proposition 4.10]{lnp}. Namely
$$j''(h)(v,v) \leq -\frac{1}{4}|v|^2_{H^1(0,2\pi)} + \frac{1}{4}\|v\|^2_{H^s(0,2\pi)}.$$

Therefore, by \cite[Theorem 2.9]{lnp}, if $I$ is a connected component of $\mathbb{T}_{in}(h)$, then it is a sum of finite Dirac masses. Finally, we conclude that for any optimal shape for Problem \eqref{prob:ilias} (and thus also the initial problem $(\p_p)$), the  free parts of its boundary (i.e., that are included in the interior of the container $\Omega$) are given by polygonal lines.  
\end{proof}\vspace{4mm}

\begin{remark}
    Enlightened by the result of Theorem \ref{th:main_gamma_2}, it is not difficult to see that if $\Om$ is a polygon, then the optimal solution $\om^*$ is also a polygon as the part $\partial \om^*\cap \partial \Om$ will simply be the union of a finite number of lines. We note that this property is similar to the one proved in \cite[Proposition 8]{Ftouhi_zuazua} with a different and more elementary method for the $(\p_\infty)$ problem. 
\end{remark}

\subsection{Proof of Theorem \ref{th:main_equivalence}}

Let $J$ and $F$ be two shape functionals and $\mathcal{C}$ a given class of subsets of $\R^n$ endowed with a distance $\delta$. We consider $I:=\{ F(\Om)\ |\ \Om\in \C \}$. We assume that the following hypotheses, stated in Theorem \ref{th:main_equivalence}, hold:
  \begin{enumerate}[label=(\Alph*)]
      \item\label{hyp1} The class $\mathcal{C}$ is non-empty and compact with respect to the distance $\delta$.  
      \item\label{hyp2} The functionals $F$ and $J$ are not constant and are continuous on $\C$ with respect to the distance $\delta$.   
      \item\label{hyp3} For every $\Om\in\C$, there exists $\eps_\Om>0$ and a continuous map $$\Psi_\Om:x\in [\inf I,\sup I]\cap (F(\Om)-\eps_\Om,F(\Om)+\eps_\Om)\longmapsto \Psi_\Om(x)\in \C$$ such that
      $$\Psi_\Om(F(\Om))=\Om \ \ \text{and}\ \ \forall x\in [\inf I,\sup I]\cap (F(\Om)-\eps_\Om,F(\Om)+\eps_\Om),\ \ \ \  F(\Psi_\Om(x)) = x.$$
      \item\label{hyp4} A set $\Om\in \C$ such that $F(\Om)<\sup I$ cannot be a local minimizer of $J$ in $(\mathcal{C},\delta)$.
  \end{enumerate}

For clarity, the proof of Theorem \ref{th:main_equivalence} is structured as a sequence of propositions, each addressing a distinct assertion of the theorem.
\begin{proposition}
  The set $I=\{F(\Om)\ |\ \Om\in \mathcal{C}\}$ is a closed interval with non-empty interior.   
\end{proposition}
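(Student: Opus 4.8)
The plan is to show $I=\{F(\Om)\ |\ \Om\in\C\}$ is a closed interval with non-empty interior by establishing three things: that $I$ is closed, that it is an interval (i.e. connected), and that its interior is non-empty. The key ingredients are compactness (Hypothesis \ref{hyp1}), continuity and non-constancy of $F$ (Hypothesis \ref{hyp2}), and the local surjectivity property of $F$ encoded in the perturbation map $\Psi_\Om$ (Hypothesis \ref{hyp3}).

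First I would prove that $I$ is closed. Since $\C$ is compact with respect to $\delta$ and $F$ is continuous, the image $F(\C)=I$ is a compact subset of $\R$, hence closed (and bounded). In particular $\inf I$ and $\sup I$ are attained, so they belong to $I$. Next I would prove non-emptiness of the interior: since $F$ is not constant by Hypothesis \ref{hyp2}, there exist $\Om_1,\Om_2\in\C$ with $F(\Om_1)\neq F(\Om_2)$, so $I$ contains at least two distinct points and therefore $\inf I<\sup I$.

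The main step is to show that $I$ is an interval, for which I would use Hypothesis \ref{hyp3}. The cleanest route is to prove that the interval $(\inf I,\sup I)$ is contained in $I$, which combined with closedness gives $I=[\inf I,\sup I]$. I would argue that $I$ is relatively open in $[\inf I,\sup I]$: fix $x\in I$ with $x<\sup I$ realized by some $\Om$, i.e. $F(\Om)=x$; the map $\Psi_\Om$ from Hypothesis \ref{hyp3} satisfies $F(\Psi_\Om(y))=y$ for all $y$ in $[\inf I,\sup I]\cap(x-\eps_\Om,x+\eps_\Om)$, so every such $y$ lies in $I$. Thus each point of $I\cap[\inf I,\sup I)$ has a relative neighborhood contained in $I$, and symmetrically on the left, so $I$ is both relatively open and (being compact) relatively closed in the connected set $[\inf I,\sup I]$; since $I$ is non-empty, connectedness forces $I=[\inf I,\sup I]$. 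Equivalently, one checks directly that $(\inf I,\sup I)\subset I$ by taking any $x_0$ in this open interval, considering the set $A:=\{y\in[x_0,\sup I] : [x_0,y]\subset I\}$, and using $\Psi$ at $\sup A$ to show $A$ is open and closed in $[x_0,\sup I]$, hence equal to it.

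The hard part will be handling the boundary behavior of $\Psi_\Om$ correctly: the domain of $\Psi_\Om$ is intersected with $[\inf I,\sup I]$, so near $x=\sup I$ the perturbation may only be available on one side, which is exactly why the statement restricts the interior argument to $x<\sup I$ (and symmetrically $x>\inf I$). I would therefore be careful that the local-surjectivity argument only needs to propagate $I$ into the open interval $(\inf I,\sup I)$, where two-sided perturbations are guaranteed, and that the endpoints are already in $I$ by closedness. With this the conclusion $I=\overline{(\inf I,\sup I)}=[\inf I,\sup I]$ follows.
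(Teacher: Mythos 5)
Your proposal is correct and follows essentially the same route as the paper: closedness via compactness of $\C$ and continuity of $F$, the inequality $\inf I<\sup I$ from non-constancy, and Hypothesis \ref{hyp33} to fill in the open interval $(\inf I,\sup I)$. The only difference is cosmetic --- you package the last step as a clopen-in-connected argument (with the paper's supremum-and-contradiction argument offered as your alternative), and your worry about one-sided perturbations at the endpoints is already handled by the intersection with $[\inf I,\sup I]$ in the domain of $\Psi_\Om$, so no extra care is actually needed there.
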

\begin{proof}
  Since the class $\C$ is non-empty (by Hypothesis \ref{hyp1}) and $F$ is not a constant functional (by Hypothesis \ref{hyp2}), the set $I$ is non-empty and $\inf I < \sup I$.
  \begin{itemize}

      \item First, let us show that the set $I$ is closed. Let $(x_n)$ be a sequence of elements of $I$ converging to some $x^*$. We consider a sequence $(\Om_n)$ of elements of $\C$ such that 
      $$\forall n\in \mathbb{N}, \ \ \ F(\Om_n) = x_n.$$
      Since the class $(\C,\delta)$ is compact (Hypothesis \ref{hyp1}), then there exists $\Om^*\in \C$ such that $(\Om_n)$ converges to $\Om^*$ with respect to $\delta$ up to a subsequence still denoted by $(\Om_n)$. \\By the continuity of $F$ with respect to $\delta$, we deduce that 
      $$F(\Om^*) = \lim\limits_{n\rightarrow +\infty} F(\Om_n) = \lim\limits_{n\rightarrow +\infty} x_n = x^*,$$ which shows that $x^*\in I$. This proves that the set $I$ is closed. 
      \item  It remains to show that the set $I$ is an interval. To do so, we prove that $(\inf I,\sup I)\subset I$. Let $y\in (\inf I,\sup I)$ and consider $$a_{y}:= \sup\{x\in I,\ x\leq y\}.$$ Since the set $I$ is closed, we have $a_{y}\in I$, i.e., there exists $\Om_0\in \C$ such that $F(\Om_0) = a_{y}$. \\
      If we assume that $a_{y} < y$, then by Hypothesis \ref{hyp3}, there exists $\eps>0$ and a continuous map $$\Psi:x\in [a_{y},a_{y}+\eps)\to \Psi(x)\in \C$$ such that
      $$\forall x\in [a_{y},a_{y}+\eps),\ \ \ \ \ \ F(\Psi(x)) = x.$$ 
      This is in contradiction with $a_{y}$ being the supremum of the set $\{x\in I,\ x\leq y\}$. Thus, $y = a_{y}\in I$. We then conclude that the set $I$ is an interval.
  \end{itemize}
\end{proof}

\begin{proposition}\label{prop:existence}
Let $x\in I$. The problem $$\min\{J(\Om)\ |\ \Om\in \mathcal{C}\ \text{and}\ F(\Om)=x\}$$ admits solutions.
\end{proposition}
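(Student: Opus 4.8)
The plan is to apply the direct method of the calculus of variations to the constrained feasible class
$$\C_x := \{\Om\in\C\ |\ F(\Om)=x\},$$
reducing the statement to the classical fact that a continuous function attains its minimum on a non-empty compact set. So the whole argument splits into three checks: that $\C_x$ is non-empty, that $\C_x$ is compact with respect to $\delta$, and that $J$ is continuous on it (the last being immediate from Hypothesis \ref{hyp2}).

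First I would observe that $\C_x$ is non-empty precisely because $x\in I=\{F(\Om)\ |\ \Om\in\C\}$, so by definition of $I$ there exists at least one competitor $\Om_0\in\C$ with $F(\Om_0)=x$. Next, to see that $\C_x$ is $\delta$-compact, I would note that $\C_x=F^{-1}(\{x\})$ is a closed subset of $\C$: indeed, if $(\Om_n)\subset\C_x$ converges to some $\Om^*\in\C$ for $\delta$ (such a limit exists in $\C$ by the compactness in Hypothesis \ref{hyp1}), then continuity of $F$ (Hypothesis \ref{hyp2}) gives $F(\Om^*)=\lim_n F(\Om_n)=x$, hence $\Om^*\in\C_x$. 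A closed subset of the $\delta$-compact class $\C$ is itself $\delta$-compact.

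Finally I would take a minimizing sequence $(\Om_n)\subset\C_x$ with $J(\Om_n)\to\inf_{\C_x}J$, extract a $\delta$-convergent subsequence $\Om_{n_k}\to\Om^\star\in\C_x$ using compactness of $\C_x$, and pass to the limit using the continuity of $J$ to conclude $J(\Om^\star)=\inf_{\C_x}J$, so that $\Om^\star$ realizes the minimum. I do not expect any genuine obstacle here: the only point requiring attention is that the constraint $F(\Om)=x$ must be stable under $\delta$-convergence, and this is exactly what the continuity of $F$ provides; the coercivity usually needed in the direct method is supplied for free by the global $\delta$-compactness of $\C$ assumed in Hypothesis \ref{hyp1}. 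Note that this proposition uses only Hypotheses \ref{hyp1} and \ref{hyp2}; the perturbation Hypotheses \ref{hyp3} and \ref{hyp4} are not needed for existence and will instead enter the subsequent monotonicity and equivalence statements.
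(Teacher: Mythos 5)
Your proposal is correct and follows essentially the same route as the paper: a minimizing sequence, extraction of a $\delta$-convergent subsequence via the compactness of $\C$ from Hypothesis \ref{hyp11}, and passage to the limit using the continuity of $F$ (to preserve the constraint) and of $J$ (to realize the infimum) from Hypothesis \ref{hyp22}. Your additional packaging of the argument through the closedness and compactness of the level set $F^{-1}(\{x\})$ is a harmless reorganization of the same ideas.
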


\begin{proof}

Let $(\Om_n)$ be a minimizing sequence for the problem under consideration, i.e., such that $\Om_n\in \C$, $F(\Om_n) = x$ and  $$\lim_{n\rightarrow +\infty} J(\Omega_n) =\inf\{J(\Om)\ |\ \Om\in \mathcal{C}\ \text{and}\ F(\Om)=x\}.$$

By Hypothesis \ref{hyp1}, $\C$ is compact with respect to $\delta$. Thus, the sequence $(\Om_n)$ converges up to a subsequence (that we also denote by $(\Om_n)$) to a set $\Om^*\in \C$ as $n$ tends to $+\infty$.

By the continuity of $F$ and $J$ with respect to the distance $\delta$ (Hypothesis \ref{hyp2}), we have
$$F(\Om^*) = \lim_{n\rightarrow +\infty} F(\Om_n) = x$$
and
$$J(\Om^*) = \lim_{n\rightarrow +\infty} J(\Om_n) = \inf\{J(\Om)\ |\ \Om\in \mathcal{C}\ \text{and}\ F(\Om)=x\}.$$
We then conclude that the set $\Om^*$ solves the problem $$\inf\{J(\Om)\ |\ \Om\in \mathcal{C}\ \text{and}\ F(\Om)=x\}.$$
\end{proof}

\begin{proposition}\label{prop:continuity}
The function  $f:x\in I\to \min\{J(\Om)\ |\ \Om\in \mathcal{C}\ \text{and}\ F(\Om)=x\}$ is continuous and strictly decreasing on $I$.       
\end{proposition}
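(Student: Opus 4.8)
The plan is to establish the two assertions—continuity and strict decrease—separately, relying on the existence result of Proposition \ref{prop:existence} (so that $f$ is genuinely a minimum) together with Hypotheses \ref{hyp1}--\ref{hyp4}.

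For continuity, I would prove upper and lower semicontinuity at an arbitrary $x_0\in I$. For upper semicontinuity, fix a minimizer $\Om_0$ at level $x_0$ and feed it to the local parametrization of Hypothesis \ref{hyp3}: the map $\Psi_{\Om_0}$ produces, for each $x\in I$ close to $x_0$, a competitor $\Psi_{\Om_0}(x)$ with $F(\Psi_{\Om_0}(x))=x$, whence $f(x)\le J(\Psi_{\Om_0}(x))$; letting $x\to x_0$ and using continuity of $J$ and of $\Psi_{\Om_0}$ gives $\limsup_{x\to x_0} f(x)\le J(\Om_0)=f(x_0)$. For lower semicontinuity, I take $x_n\to x_0$ realizing the $\liminf$, pick minimizers $\Om_n$ at levels $x_n$, extract a $\delta$-convergent subsequence $\Om_n\to\Om^*$ by compactness (Hypothesis \ref{hyp1}), and use continuity of $F$ and $J$ (Hypothesis \ref{hyp2}) to get $F(\Om^*)=x_0$ and $J(\Om^*)=\liminf f(x_n)$; since $\Om^*$ is admissible at level $x_0$, this yields $f(x_0)\le\liminf_{x\to x_0} f(x)$.

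For monotonicity, the cleanest route is to identify $f$ with the relaxed value function $g(x):=\min\{J(\Om)\ |\ \Om\in\C,\ F(\Om)\le x\}$, which is attained (as $\{F\le x\}$ is closed in the compact $\C$) and manifestly non-increasing. Clearly $g(x)\le f(x)$. For the reverse inequality I would take a minimizer $\Om^*$ of $g(x)$: if $F(\Om^*)=x$ it is admissible for $f(x)$ and we are done; if $F(\Om^*)<x\le\sup I$, then $F(\Om^*)<\sup I$ and Hypothesis \ref{hyp4} supplies $\Om'$ arbitrarily $\delta$-close with $J(\Om')<J(\Om^*)$, and since $\{F<x\}$ is open, $F(\Om')<x$ still holds, contradicting the minimality of $g(x)$. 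Hence $f=g$ is non-increasing. To upgrade this to strict decrease, I suppose $f$ were constant equal to $\ell$ on some $[x_1,x_2]$ with $x_1<x_2$; choosing the interior point $x^*=(x_1+x_2)/2<\sup I$ and a minimizer $\Om^*$ there, Hypothesis \ref{hyp4} again yields $\Om'$ close to $\Om^*$ with $J(\Om')<\ell$, and continuity of $F$ forces $F(\Om')\in(x_1,x_2)$, so $\ell=f(F(\Om'))\le J(\Om')<\ell$, a contradiction.

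I expect the strict monotonicity to be the delicate point. A naive argument that minimizes $f$ over $[x_1,x_2]$ and perturbs at the optimal level runs into trouble when this minimum sits at the left endpoint, because Hypothesis \ref{hyp4} only guarantees a nearby competitor whose $F$-value could fall just outside the interval. Reformulating monotonicity through the $\le$-constrained functional $g$ circumvents this difficulty, since the feasibility condition $F<x$ is open and hence stable under the small perturbation supplied by Hypothesis \ref{hyp4}; the role of Hypothesis \ref{hyp3} is then confined to the upper-semicontinuity half of the continuity argument.
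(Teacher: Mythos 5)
Your proposal is correct. The continuity half coincides with the paper's proof: the same compactness-plus-continuity argument for the $\liminf$ inequality, and the same use of the local parametrization $\Psi_{\Om_0}$ from Hypothesis \ref{hyp33} to build competitors for the $\limsup$ inequality. Where you genuinely diverge is the monotonicity. The paper shows that $f$ admits no local minimum in the interior of $I$ (a minimizer at such a level would, via the continuity of $F$, be a local minimizer of $J$ in $(\mathcal{C},\delta)$, contradicting Hypothesis \ref{hyp44}) and then directly concludes strict decrease. You instead first identify $f$ with the relaxed value function $g(x)=\min\{J(\Om)\mid F(\Om)\le x\}$ by perturbing a minimizer of $g$ whenever $F(\Om^*)<x$ and exploiting the openness of the condition $F<x$; this gives non-increase for free, and you then rule out intervals of constancy by perturbing a minimizer at an interior level. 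Your route buys two things: it closes the logical step that the paper leaves implicit (absence of interior local minima alone does not force a continuous function to be decreasing --- one still needs the non-increasing direction, which your $f=g$ identification supplies), and it handles cleanly the endpoint issue you flag, since feasibility for the $\le$-constraint is stable under the small perturbations provided by Hypothesis \ref{hyp44}. Note also that your identity $f=g$ is exactly the equivalence between problems \ref{item:1} and \ref{item:2} that the paper proves afterwards \emph{using} the monotonicity of $f$; you derive it independently and without circularity, so it can legitimately serve as the engine of the monotonicity proof.
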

\begin{proof}
    \textbf{\textit{Continuity:}}

Let $x_0\in I$. By Proposition \ref{prop:existence}, for every $x\in I$, there exists $\Om_x$ solution of the problem 
$$\min\{J(\Om)\ |\ \Om\in\mathcal{C}\ \text{and}\ F(\Om)=x\}.$$
\begin{itemize}
    \item We first show an  inferior limit inequality. Let $(x_n)_{n\ge 1}$ be a sequence converging to $x_0$ such that 
    $$\liminf_{x\rightarrow x_0} J(\Om_x)=\lim_{n\rightarrow +\infty} J(\Om_{x_n}).$$ 
    Since all the convex sets $\Om_{x_n}$ are included in the compact class $\C$ and the functionals $J$ and $F$ are continuous with respect to the distance $\delta$ (Hypothesis \ref{hyp2}), there exists $\Om^*\in\C$ that is a limit of a subsequence still denoted by $(\Om_{x_n})$ such that $F(\Om^*) = x$.
    We then have
    $$f(x_0)\leq J(\Om^*) = \lim_{n\rightarrow+\infty} J(\Om_{x_n}) = \liminf_{x\rightarrow x_0} J(\Om_x)=\liminf_{x\rightarrow x_0} f(x).$$
    \item It remains to prove a superior limit inequality.  Let $(x_n)_{n\ge 1}$ be a sequence converging to $x_0$ such that 
    $$\limsup_{x\rightarrow x_0} f(x)=\lim_{n\rightarrow +\infty} f(x_n).$$
    By Hypothesis \ref{hyp3}, there exists $\eps_0>0$ and a continuous map $$\Psi_0:x\in I\cap (x_0-\eps_0,x_0+\eps_\Om)\longmapsto \Psi_0(x)\in \C$$ such that
      $$\Psi_0(x_0) = \Om_{x_0}\ \ \ \text{and}\ \ \ \forall x\in I\cap (x_0-\eps_0,x_0+\eps_\Om),\ \ \ \ \ \ F(\Psi_0(x)) = x.$$
    We recall that $\lim\limits_{n\rightarrow +\infty} x_n = x_0$. Thus, there exists $n_0$ such that 
    $$\forall n\ge n_0,\ \ \ \ x_n\in (x_0-\eps_0,x_0+\eps_\Om)\ \ \ \text{and}\ \ \ F(\Psi_0(x_n)) = x_n.$$
    Then, by the definition of $f$ being an infimum, we have 
    $$\forall n\ge n_0,\ \ \ \ f(x_n) \leq J(\Psi_0(x_n)).$$
Passing to the limit, we get
\smaller$$\limsup_{x\rightarrow x_0} f(x) = \lim_{n\rightarrow +\infty} f(x_n) \leq \lim_{n\rightarrow +\infty} J(\Psi_0(x_n))= J\big(\Psi_0(\lim_{n\rightarrow +\infty} x_n)\big) = J(\Psi_0(x_0)) = J(\Om_{x_0}) = f(x_0).$$
\end{itemize}

We then conclude that $$\lim\limits_{x\rightarrow x_0} f(x) = f(x_0).$$

    \textbf{\textit{Monotonicity:}}\

    We now prove that $f$ does not admit a local minimum in the interior of the interval $I$. Let us assume by contradiction that it is not the case: then by the continuity of $f$ there exists a local minimum of $f$ at a point $x^*$ in the interior of $I$. Thus, there exists $\alpha>0$ and $\Om^*\in \C$ such that 
    $$F(\Om^*) = x^*\ \ \ \text{and}\ \ \ \forall x\in  (x^*-\alpha,x^*+\alpha),\ \ \ J(\Om^*) = f(x^*) \leq f(x),$$
    which implies
    $$\forall \Om\in\C\ \text{such that}\ F(\Om)\in (x^*-\alpha,x^*+\alpha),\ \ \ J(\Om^*)\leq J(\Om).$$
    Because of the continuity of $F$ in $(\C,\delta)$, this would imply that $\Om^*$ is a local minimizer of $J$ in $\C$ with respect to $\delta$ which contradicts Hypothesis \ref{hyp4}.  We then conclude that the continuous function $f$ is strictly decreasing on the interval $I$. 
    \end{proof}

    \begin{proposition}
    Let $x\in I$. The problems 
  \begin{enumerate}[label=(\Roman*)]
    \item\label{item:1} $\min\{J(\Om)\ |\ \Om\in \C\ \text{and}\ F(\Om) = x\}$,
    \item\label{item:2} $\min\{J(\Om)\ |\ \Om\in \C\ \text{and}\ F(\Om) \leq x\}$,
    \item\label{item:3} $\min\{F(\Om)\ |\ \Om\in \C\ \text{and}\ J(\Om) = f(x)\}$,
    \item\label{item:4} $\min\{F(\Om)\ |\ \Om\in \C\ \text{and}\ J(\Om) \leq f(x)\}$
\end{enumerate}
are equivalent. In the sense that any solution to one of the problem also solves the other ones.
\end{proposition}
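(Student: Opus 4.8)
The plan is to show that the four problems share one and the same solution set, using as the sole engine the strict monotonicity and continuity of $f$ established in Proposition \ref{prop:continuity}. The single elementary fact I would isolate first is a \emph{fundamental inequality}: for every $\Om\in\C$, writing $y:=F(\Om)\in I$, the set $\Om$ is admissible for problem \ref{item:1} at level $y$, so that
$$f(F(\Om)) = f(y) \le J(\Om),$$
with equality if and only if $\Om$ minimizes $J$ among all sets sharing its value of $F$, i.e. if and only if $\Om$ solves \ref{item:1} at level $y$. This inequality, together with the fact that $f$ is a strictly decreasing bijection of $I$ onto $f(I)$, is what will let me trade inequality constraints for equality ones and swap the roles of $J$ and $F$. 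Note also that every value $F(\Om)$ to which $f$ is applied automatically lies in $I$, since $F(\C)=I$.

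First I would prove the equivalence of \ref{item:1} and \ref{item:2}. As $\{F=x\}\subset\{F\le x\}$, the optimal value of \ref{item:2} is at most $f(x)$; conversely, for any $\Om$ admissible for \ref{item:2} the fundamental inequality gives $J(\Om)\ge f(F(\Om))\ge f(x)$, the last step because $F(\Om)\le x$ and $f$ is decreasing. Hence the optimal value of \ref{item:2} is exactly $f(x)$. If $\Om^*$ solves \ref{item:2} then $J(\Om^*)=f(x)$ and $F(\Om^*)\le x$; were $F(\Om^*)<x$, \emph{strict} monotonicity would force $f(F(\Om^*))>f(x)$, contradicting $J(\Om^*)=f(x)\ge f(F(\Om^*))$. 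Thus $F(\Om^*)=x$ and $\Om^*$ solves \ref{item:1}, while the reverse inclusion is immediate, giving equality of the two solution sets.

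Next I would treat \ref{item:1} versus \ref{item:3} and \ref{item:3} versus \ref{item:4} by the same mechanism run in reverse. For \ref{item:3}, any $\Om$ with $J(\Om)=f(x)$ satisfies $f(F(\Om))\le J(\Om)=f(x)$, whence $F(\Om)\ge x$; since a solution $\Om_x$ of \ref{item:1} (which exists by Proposition \ref{prop:existence}) has $J(\Om_x)=f(x)$ and $F(\Om_x)=x$, this $\Om_x$ realizes the minimal value $x$ of $F$ on $\{J=f(x)\}$ and therefore solves \ref{item:3}. This simultaneously furnishes existence for \ref{item:3} for free, with no separate compactness argument. Conversely a solution of \ref{item:3} must have $F=x$ (from $F\ge x$ and $F\le x$) and hence solves \ref{item:1}. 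The passage from \ref{item:3} to \ref{item:4} mirrors that from \ref{item:1} to \ref{item:2}: the optimal value of \ref{item:4} is $x$, and a minimizer of \ref{item:4} satisfies $F=x$, which forces $J\ge f(x)$ by definition of $f$, while admissibility gives $J\le f(x)$, so $J=f(x)$ and it solves \ref{item:3}. Chaining these three equalities shows that problems \ref{item:1}--\ref{item:4} have a common solution set.

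The step I expect to require the most care is the correct bookkeeping of which inequality ($\le$ or $\ge$) strict monotonicity produces in each of the two directions, and checking in each case that the value to which $f$ is applied is an admissible $F$-value. The genuinely structural point, rather than a computational obstacle, is that \emph{strict} (not merely non-strict) monotonicity of $f$ is indispensable: it is precisely what upgrades ``$F\le x$'' to ``$F=x$'' for minimizers of \ref{item:2} and \ref{item:4}, and what guarantees that the reversed problems \ref{item:3} and \ref{item:4} are solved exactly by the minimizers of \ref{item:1}, so that no independent existence proof for the reversed problems is ever needed.
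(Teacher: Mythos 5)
Your proposal is correct and follows essentially the same route as the paper: both arguments rest on the single inequality $J(\Om)\ge f\big(F(\Om)\big)$ combined with the strict monotonicity and continuity of $f$ from Proposition \ref{prop:continuity}, and both chain the equivalences through the same pairings \ref{item:1}$\leftrightarrow$\ref{item:2}, \ref{item:1}$\leftrightarrow$\ref{item:3}, \ref{item:3}$\leftrightarrow$\ref{item:4}. Your explicit isolation of the ``fundamental inequality'' and the observation that existence for \ref{item:3} comes for free are nice presentational touches, but the substance is identical.
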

\begin{proof}
Let us prove the equivalence between the four problems. 
\begin{itemize}
\item 
We first show that any solution of \ref{item:1} solves \ref{item:2}: let $\Om_{x}$ be a solution to \ref{item:1}. Then for every set $\Om\in \C$ such that $F(\Om)\leq x$, one has
$$J(\Om) \ge  f\big(F(\Om)\big) \ge f(x) = J(\Om_x),$$
where we used the monotonicity of $f$ given by Proposition \ref{prop:continuity}: therefore $\Om_{x}$ solves \ref{item:2}.
\item Reciprocally, let now $\Om^x$ be a solution of \ref{item:2}: we want to show that $\Om^x$ must satisfy $F(\Om^x)=x$.
We notice that
$$f(x) \ge J(\Om^x)\ge f\big(F(\Om^x)\big) \ge f(x),$$
where the first inequality follows as Problem \ref{item:2} allows more candidates than in the definition of $f$, and the last inequality uses again the monotonicity of $f$.
Therefore $f(x) = f\big(F(\Om^x)\big)$, and since $f$ is continuous and strictly decreasing, we obtain $F(\Om^x) = x$, which implies that the set $\Om^x$ solves \ref{item:1}. 
\end{itemize}
We have proved the equivalence between problems \ref{item:1} and \ref{item:2};
the equivalence between problems \ref{item:3} and \ref{item:4} is shown by similar manipulations. It remains to prove the equivalence between \ref{item:1} and \ref{item:3}. 
\begin{itemize}
\item Let $\Om_{x}$ be a solution of \ref{item:1}, which means that $\Om_x\in \C$ and $J(\Om_x) = f(x)$. Then, for $\Om \in\C$ such that $J(\Om) = f(x)$, we have $$f(x) = J(\Om)\ge f\big(F(\Om)\big).$$
Thus, since $f$ is decreasing, we get $x = F(\Om_x) \leq F(\Om)$, which means that the set $\Om_x$ solves \ref{item:3}. 
\item Let now $\Om_{x}'$ be a solution of \ref{item:3}. We have 
$$f(x)=F(\Om'_x)\ge f\big(F(\Om'_x)\big).$$
Thus, by the monotonicity of $f$, we get $x\ge F(\Om'_x)$. On the other hand, since $\Om'_x$ solves \ref{item:3} and that there exists $\Om_{x}$ solution to \ref{item:1}, we have $F(\Om'_x) \ge x$, which finally gives $F(\Om'_x)=x$ and shows that $\Om'_x$ solves \ref{item:1}.   
\end{itemize}
\end{proof}

\section{Some remarks on the extremal cases $p=1$ and $p=+\infty$}\label{s:extremal}
This section is devoted to the planar setting and the extremal cases $p=1$ and $p=\infty$. On the one hand, Problem $(\mathcal{P}_1)$ is shown to be equivalent to a reverse isoperimetric problem. On the other hand, the case $p=\infty$ corresponds to the problem of minimizing the Hausdorff distance between two convex sets studied in \cite{Ftouhi_zuazua}. 
\subsection{The case $p=1$}
In this case, we retrieve the following problem 
\begin{equation}\label{prob:p1}
    \min \{\int_0^{2\pi} (h_\Om-h)d\theta\ ;\ h\leq h_\Om,\ h+h''\ge 0,\ \text{and}\ \int_0^{2\pi} h(h+h'')d\theta = c\},
\end{equation}
with $c\in[0,|\Om|]$.\vspace{1mm} 

We recall that the perimeter of convex bodies can be computed via their support functions as follows 
$$P(\Omega) = \int_{0}^{2\pi} h_\Omega d\theta\ \ \ \text{and}\ \ \ P(\omega) = \int_{0}^{2\pi} h d\theta,$$
where $h$ and $h_\Omega$ are the support functions of two convex sets $\omega$ and $\Omega$ respectively. This allows us to write 
$$\int_0^{2\pi} (h_\Om-h)d\theta  = P(\Omega)-P(\omega),$$
which shows that Problem \eqref{prob:p1} is equivalent to the following purely geometric one 
\begin{equation}\label{prob:p11}
    \max\{P(\om)\ |\ \om\ \text{is convex}\subset\Om\ \ \text{and}\ |\om|= c\}.
\end{equation}

Problem \eqref{prob:p11} can be interpreted as a \textit{reverse isoperimetric problem}, where the goal is to maximize the perimeter under volume constraint instead of minimizing it. As one expects, for such problems, an extra geometric assumption (such as the convexity for example) is necessary to ensure the existence of a solution. The literature on reverse isoperimetric problems is abundant. For examples of related works, we refer to \cite{henrot_bianchini,bogosel_reverse,zbMATH02574059} for problems under convexity constraint and to 
\cite{zbMATH07601272,arXivvv,zbMATH00796894} for studies where curvature constraints are considered.    

\begin{proposition}
    The optimal set $\om^*$ touches the boundary of $\Om$ at least in two points and the free parts of its boundary, namely the connected components of $\partial \om^*\cap \Om$, are straight lines. 
\end{proposition}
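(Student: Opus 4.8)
The plan is to read $\om^*$ through the equivalent reverse isoperimetric formulation \eqref{prob:p11}, i.e. to regard it as a maximizer of the perimeter $P$ among convex subsets of $\Om$ of area $c$, and to combine this with the polygonality already established in Theorem \ref{th:main_gamma_2}. Since that result guarantees that $\partial\om^*\cap\Om$ is a union of polygonal lines, proving that each connected component is a single segment reduces to excluding the presence of a vertex of $\om^*$ lying in the interior of $\Om$. I would therefore argue by contradiction: assuming $\om^*$ has a corner at a point $V\in\mathrm{int}(\Om)$ where two free edges meet, I would construct an \emph{area-preserving} perturbation that strictly increases the perimeter.

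The key step is a simple parallel transport of the corner. Because $V$ is interior, a whole neighbourhood of $V$ on $\partial\om^*$ lies in $\mathrm{int}(\Om)$, so I can pick points $A$ and $B$ on the two incident free edges, close enough to $V$ that the triangle $AVB$ is contained in $\mathrm{int}(\Om)$. I then replace $V$ by a nearby point $V'$ on the line $L$ through $V$ parallel to the chord $AB$, keeping $A$, $B$ and the rest of $\om^*$ fixed. Since $A$, $V'$, $B$ span a triangle with the same base $AB$ and the same height, the enclosed area is unchanged, so the competitor $\om'$ satisfies $|\om'|=|\om^*|=c$; and for $V'$ close to $V$ it remains convex and contained in $\Om$. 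The perimeter changes only through $g(V')=|AV'|+|V'B|$, which is a strictly convex function of the position of $V'$ along $L$ (a sum of two distance functions, with $A,B\notin L$) and hence has no local maximum. Thus from any position $V$ there is an admissible direction along $L$ for which $g$ strictly increases, and if $V$ happens to be the unique minimiser of $g$ on $L$ then both directions work. This yields $\om'$ with the same area and strictly larger perimeter, contradicting optimality. Hence $\om^*$ has no interior vertex, and together with Theorem \ref{th:main_gamma_2} this shows that every connected component of $\partial\om^*\cap\Om$ is a straight segment.

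It remains to see that $\om^*$ meets $\partial\Om$ in at least two points. Set $T:=\partial\om^*\cap\partial\Om$; the complement $\partial\om^*\setminus T$ has just been shown to be a union of straight segments, whose endpoints lie in $\overline{T}$. If $T$ were empty, $\partial\om^*$ would be a closed convex polygon all of whose vertices lie in $\mathrm{int}(\Om)$, impossible by the previous step. If $T$ reduced to a single point $p$, then $\partial\om^*\setminus\{p\}$ would be a single arc, forced to be one straight segment, which makes $\om^*$ degenerate and contradicts $c>0$ (recall the interesting range is $c\in(0,|\Om|)$, the cases $c\in\{0,|\Om|\}$ being trivial). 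Therefore $T$ contains at least two points.

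The part I expect to be most delicate is not the perimeter computation, which collapses to the strict convexity of $g$, but the verification that the local modification is genuinely admissible: one must check that displacing the single vertex keeps the whole boundary convex (the angles at $A$ and $B$ vary continuously and stay strictly convex for small perturbations) and that $\om'$ stays inside $\Om$, which is precisely where interiority of $V$ is used. Some care is also needed in phrasing the reduction when a free component abuts $\partial\Om$ at its endpoints, so that $A$ and $B$ are selected on the free edges near $V$ rather than at contact points.
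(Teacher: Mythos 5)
Your overall strategy --- reading $\om^*$ as a perimeter maximizer at fixed area via \eqref{prob:p11}, invoking Theorem \ref{th:main_gamma_2} for polygonality, and excluding interior vertices by a parallel chord movement --- is exactly the paper's. But your implementation of the chord movement has a genuine gap: you pin two points $A$ and $B$ chosen in the \emph{interiors} of the two edges incident to the vertex $V$ and move only $V$ along the line through $V$ parallel to $AB$. At such points the boundary of $\om^*$ is straight, so the interior angles at $A$ and $B$ equal exactly $\pi$ before the perturbation; they are not ``strictly convex'' as you assert. A short computation (take $A=(-1,0)$, $V=(0,1)$, $B=(1,0)$, $V'=(t,1)$) shows that for $t>0$ the displaced vertex $V'$ lies on the exterior side of the line through $V$ and $B$, so the competitor has an interior angle exceeding $\pi$ at $B$; for $t<0$ the same happens at $A$. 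Hence for every $t\neq 0$ one of the two pinned points becomes a reflex vertex: the competitor is not convex, and passing to its convex hull would change the area. There is no admissible direction, so the intended contradiction never materializes.

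The paper's version avoids this by taking $A$ and $C$ to be the two corners of $\partial\om^*$ \emph{adjacent} to the interior vertex $B=V$ (these may lie on $\partial\Om$). There the interior angles are strictly less than $\pi$ --- in particular at a contact point where a free edge meets $\partial\Om$ the angle cannot be $\pi$, since otherwise the corresponding supporting line of $\Om$ would contain $V\in \mathrm{int}\,\Om$ --- so a small parallel displacement of the middle vertex keeps every angle convex, stays inside $\Om$ (the perturbed set is contained in the convex hull of $\om^*$ and the displaced vertex, which remains in $\Om$), preserves the area, and strictly increases $|AV'|+|V'C|$ in at least one direction, by the same strict convexity you invoke for $g$. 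Your counting argument for the two contact points is fine; what needs to be corrected is only the choice of the pinned points, which must be the genuine neighbouring corners rather than interior points of the incident edges.
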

\begin{proof}
The proof relies mainly on parallel
chord movements. More precisely, if the boundary of the solution $\om^*$ contains a polygonal part, one can consider $A$, $B$, $C$ three consecutive corners so that
ABC forms a triangle, see Figure \ref{fig:chord_mouvement}. One can move $B$ along the line passing through $B$ and being parallel to the line $(AC)$.
In this way, the volume is preserved and the perimeter must increase when moving $B$ away from the perpendicular
bisector of the segment $[A, C]$ (which is possible at least in one direction).

    By Theorem \ref{th:main_gamma_2}, any connected part of the boundary of the optimal set $\om^*\cap \Omega$ is polygonal. If we assume that there exists a connected part of $\partial\om^*\cap \Om$ which is not a line, we can perform a parallel chord movement, as in Figure \ref{fig:chord_mouvement}, and increase the perimeter while preserving the value of the area. Thus, every connected part of $\partial\om^*\cap \Om$ is a line. 

    \begin{figure}[!h]
    \centering
    \includegraphics[scale=.8]{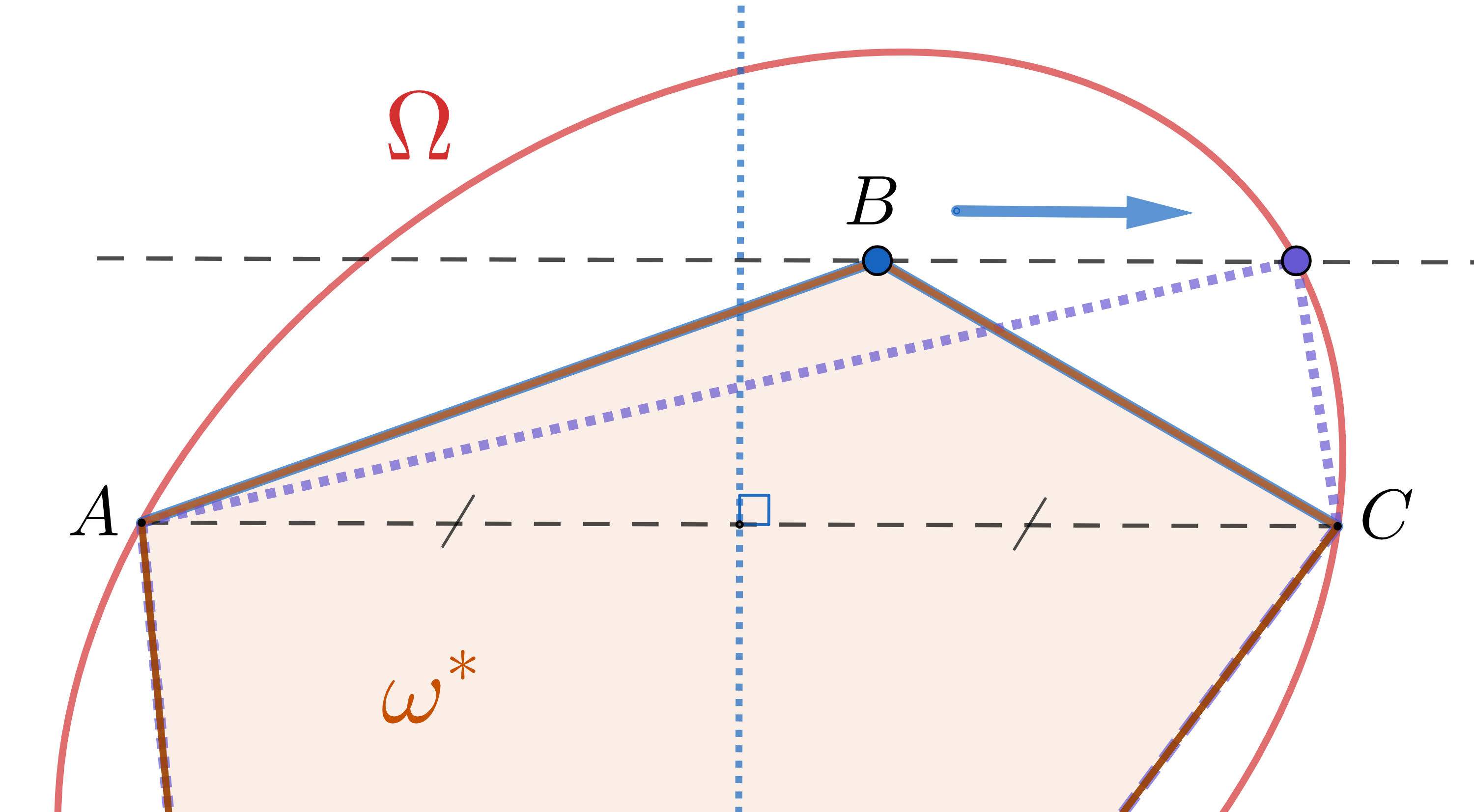}
    \caption{Parallel chord movements increase the perimeter while preserving the area.}
    \label{fig:chord_mouvement}
\end{figure}

At last, since the free part of the boundary of the optimal set $\om^*$ is a line, then $\partial \Om\cap \om^*$ contains at least two different points. 
\end{proof}

\begin{remark}
    It is straightforward that if the container $\Omega$ is a polygon then the optimal solution $\omega^*$ is also polygonal. 
\end{remark}

  We note that Problem \eqref{prob:p11} has recently been completely solved by B. Bogosel when the container $\Omega$ is a ball \cite{bogosel_reverse}. In this case,  the solution $\omega$ is always a polygon having all but one sides equal. The proof is purely geometric and heavily relies on the specific structure of the spherical container $\Omega$. Therefore, as explained in \cite[Remark 11]{bogosel_reverse}, a generalization using similar techniques seems to be out of reach. In the sequel, we state the following conjecture when $\Omega$ is a triangle:
  \begin{conjecture}
      Let $\Omega$ be a triangle of vertices $A$, $B$ and $C$, whose diameter is given by the side $[AB]$ and such that $\widehat{CAB}\ge \widehat{CBA}$ and let $c\in[0,|\Om|]$. The solution of the problem 
      \begin{equation*}
    \max\{P(\om)\ |\ \om\ \text{is convex}\subset\Om\ \ \text{and}\ |\om|= c\}
\end{equation*}
is given by the triangle $MAB$ whose area is equal to $c|\Om|$ such that $M\in[AC]$, see Figure \ref{fig:conjecture_triangle}. 
  \end{conjecture}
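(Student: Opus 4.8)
The conjecture is, by the reformulation of the case $p=1$ given just above, nothing but the problem of maximizing the perimeter $P(\om)$ over convex $\om\subset\Om$ with $|\om|=c$, where $\Om$ is the triangle $ABC$. The plan is to first cut the class of competitors down to a one-parameter family of triangles, and then to single out the optimum by an explicit comparison. First I would invoke the structural results already available: by Theorem \ref{th:main_gamma_2} and the preceding proposition (for $p=1$), any optimizer $\om^*$ is a convex polygon whose free boundary $\partial\om^*\cap\Om$ is a union of \emph{single} straight segments. Since $\Om$ is a triangle, each such segment is a chord joining two of its sides and hence cuts off the corner between them; thus $\om^*$ is obtained from $\Om$ by cutting off triangular pieces at (a subset of) the vertices $A,B,C$. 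Writing $V:=|\Om|-c$ for the area to be removed, maximizing $P(\om)$ becomes: remove area $V$ by corner cuts while \emph{minimizing the total perimeter loss}, where a cut at a corner of angle $\theta$ with legs $a,b$ removes area $\tfrac12 ab\sin\theta$ and loses perimeter $\delta(a,b)=a+b-\sqrt{a^2+b^2-2ab\cos\theta}\ge 0$.

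Second, I would show the optimum cuts \emph{exactly one} corner and cuts it all the way to a neighbouring vertex, so that $\om^*$ is a triangle sharing a full side of $\Om$. For fixed removed area at a given corner (i.e.\ fixed $ab=k$), one checks that $\delta$ is a decreasing function of $s:=a+b$; maximizing $s$ subject to $a\le L_1$, $b\le L_2$ (the incident side lengths) forces a leg to its vertex, and since the increment $g(L_1)-g(L_2)$ with $g(L)=L+k/L$ equals $(L_1-L_2)(1-k/(L_1L_2))\ge 0$ under the feasibility bound $k\le L_1L_2$, pushing along the \emph{longer} incident side is always optimal. Denoting by $\phi_i(v)$ the resulting minimal loss for removing area $v$ at corner $i$, the key point is that $\phi_i$ is \emph{concave}: after substituting the optimal legs one gets $\phi_i(v)=L+\tfrac{2v}{L\sin\theta}-\sqrt{Q(v)}$ with $Q$ a quadratic in $v$ whose discriminant equals $-16$, so $\sqrt{Q}$ is convex and $\phi_i$ concave. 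Consequently the total loss $\sum_i\phi_i(v_i)$ is a separable concave function on the simplex $\{v_i\ge 0,\ \sum_i v_i=V\}$, and transferring removed area between any two active corners along a segment on which the objective is concave cannot increase it until one cut vanishes; hence all the area is removed at a single corner, and $\om^*$ is a triangle having one full side of $\Om$ as an edge.

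Finally I would identify which corner to cut by comparing $\phi_A(V),\phi_B(V),\phi_C(V)$. Cutting corner $C$ (the largest angle, opposite the longest side $[AB]$) yields a triangle sharing the diameter $[AB]$, which is the most efficient side to keep, so I expect $\phi_C(V)\le\min(\phi_A(V),\phi_B(V))$ and thus $\om^*$ shares $[AB]$. Within corner $C$ the incident sides are $[CA]$ and $[CB]$, and the hypothesis $\widehat{CAB}\ge\widehat{CBA}$ gives, via the law of sines (both angles acute since $\gamma$ is the largest), $|CB|\ge|CA|$; pushing along the longer side $[CB]$ to the vertex $B$ leaves the apex $M$ on $[CA]\subset[AC]$, producing exactly the triangle $MAB$. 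The remaining apex comparison is $P(MAB)-P(NAB)$ for the competitor $N\in[BC]$, which splits into a nonpositive term proportional to $|CA|-|CB|$ and a nonnegative difference of the two chord lengths $|BM|-|AN|$; I have verified on sample configurations that the positive term dominates, giving $P(MAB)>P(NAB)$.

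\textbf{Main obstacle.} The delicate step is the last one. While the reduction to triangles is clean, proving the three-corner inequality $\phi_C(V)\le\min(\phi_A(V),\phi_B(V))$ and the apex comparison \emph{in full generality} requires controlling explicit inequalities between the functions $\phi_i$ uniformly over the whole range $V\in(0,|\Om|)$ and over all admissible triangles. In the apex comparison the two competing effects — the penalty $|CA|-|CB|\le 0$ against the gain $|BM|-|AN|\ge 0$ — nearly cancel (they agree to leading order as $c\to 0$), so a robust estimate, rather than a verification on examples, is exactly what is needed; it is this uniform cancellation control that keeps the statement at the level of a conjecture.
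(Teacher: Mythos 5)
This statement is stated in the paper as a \emph{conjecture}: the authors give no proof, and they explicitly remark (citing Bogosel's solution for the disk and \cite[Remark 11]{bogosel_reverse}) that the known techniques do not seem to extend to other containers. So there is no proof in the paper to compare yours against; the only question is whether your argument closes the conjecture, and it does not. You say so yourself, but let me pin down where the genuine gaps are. The decisive steps --- the single-corner selection inequality $\phi_C(V)\le\min\bigl(\phi_A(V),\phi_B(V)\bigr)$ and the apex comparison between $M\in[AC]$ and $N\in[BC]$ --- are only ``verified on sample configurations.'' Since, as you correctly observe, the two competing terms in the apex comparison cancel to leading order as $c\to 0$, a pointwise numerical check cannot substitute for a uniform estimate; this is exactly the part that would constitute a proof, and it is missing. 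Until those two inequalities are established for all admissible triangles and all $V\in(0,|\Om|)$, the statement remains a conjecture.

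There are also two smaller structural gaps in the reduction itself. First, a free chord of $\om^*$ joining two sides of the triangle separates $\Om$ into a corner triangle and a quadrilateral containing the other two vertices; your decomposition assumes the removed component is always the corner piece, but for small $c$ the optimizer could a priori be the corner piece itself (so that the removed region contains two vertices and is not a union of corner cuts). This case must be ruled out separately, e.g.\ by a direct perimeter comparison with $MAB$. Second, the concave-minimization-over-the-simplex argument ignores the coupling between cuts at different corners: once a leg of the cut at $C$ is pushed to the vertex $B$, a simultaneous cut at $B$ overlaps with it, so the feasible set is strictly smaller than the simplex $\{v_i\ge 0,\ \sum_i v_i=V\}$ and its extreme points need not be the simplex vertices. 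The elementary computations you do carry out (that $\delta(s)=s-\sqrt{s^2-m}$ is decreasing in $s=a+b$ at fixed $ab$, that the leg should be pushed along the longer incident side, and that $\phi_i$ is concave as a linear function minus the square root of a nonnegative quadratic) are correct and would be a sensible skeleton for an eventual proof, but as it stands the proposal establishes at most a reduction, not the conjecture.
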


    \begin{figure}[!h]
    \centering
    \includegraphics[scale=.6]{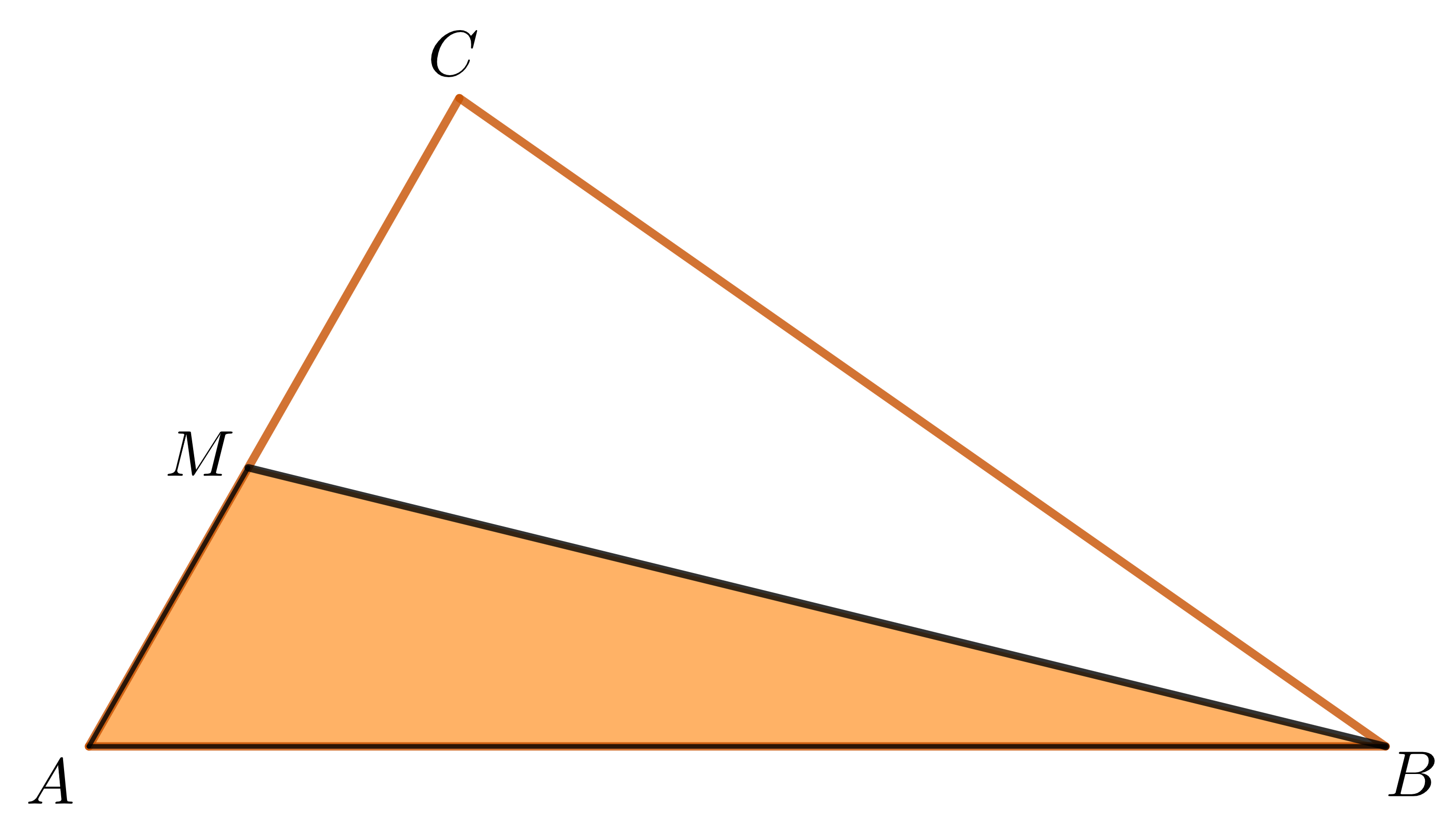}
    \caption{The triangle $MAB$ is conjectured to be the solution of the reverse isoperimetric problem when the contained $\Omega$ is the triangle $ABC$.}
    \label{fig:conjecture_triangle}
\end{figure}

\subsection{The case $p=+\infty$}
The problem  $(\p_\infty)$ can be written as 
$$\min\{d^H(\omega,\Om)\ |\ \omega\subset \Om\ \text{is convex and}\ |\om|=c \},$$
which is, as stated in \cite[Theorem 1]{Ftouhi_zuazua}, equivalent to the problem 
\begin{equation}\label{eq:prob_d}
\min\{|\om|\ |\ \text{$\om$ is convex and}\  h_\Om-d_c\leq h_\om\leq h_\Om\},
\end{equation}
where $d_c$ is a constant depending on $c$.

We are able to characterize the exact solution of the problem in the following case: 
\begin{proposition}
Assume that there exists $d_0>0$ such that $h_\Om''+h_\Om\ge d_0$. For every $d\in [0,d_0]$, the solution of \eqref{eq:prob_d} is given by the inner parallel set $$\Om_{-d}:=\{x\in \Om\ |\ d(x,\partial \Om)\ge d\}.$$
\end{proposition}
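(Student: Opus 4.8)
The plan is to bypass the volume functional almost entirely and reduce \eqref{eq:prob_d} (with the constant $d_c$ taken to be the given $d$) to a one-line monotonicity argument, once we have established the key identity $h_{\Om_{-d}} = h_\Om - d$. This identity is the whole point of the curvature hypothesis, so I would organize the proof around it.

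First I would record the effect of the hypothesis at the level of support functions. Setting $g := h_\Om - d$, we have $g'' + g = h_\Om'' + h_\Om - d \ge d_0 - d \ge 0$ for every $d \in [0,d_0]$, so by the convexity criterion recalled in Section~\ref{ss:support} the function $g$ is the support function of a convex body, which I denote $\om_d$. The heart of the matter is then to prove that $\om_d = \Om_{-d}$, i.e. that this body is exactly the inner parallel set. Writing $B$ for the closed unit disk (whose support function is the constant $1$), the linearity of support functions under Minkowski sums gives $h_{\om_d + dB} = g + d = h_\Om$, whence $\om_d + dB = \Om$ because a convex body is determined by its support function. From this both inclusions follow: if $x \in \om_d$ then $x + dB \subseteq \om_d + dB = \Om$, so $x \in \Om_{-d}$; conversely, if $y \in \Om_{-d}$ but $y \notin \om_d$, strict separation of $y$ from the closed convex set $\om_d$ yields a unit direction $\theta$ with $\langle y,\theta\rangle > h_{\om_d}(\theta) = h_\Om(\theta) - d$, and then the point $y + d\theta$ lies in $y + dB \subseteq \Om$ yet satisfies $\langle y + d\theta,\theta\rangle > h_\Om(\theta)$, contradicting the definition of $h_\Om$. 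Hence $\om_d = \Om_{-d}$ and $h_{\Om_{-d}} = h_\Om - d$.

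With this identity in hand the optimization is immediate. The set $\Om_{-d}$ is admissible for \eqref{eq:prob_d}, since $h_\Om - d = h_{\Om_{-d}} \le h_\Om$ as $d \ge 0$. Moreover, any admissible $\om$ satisfies $h_\om \ge h_\Om - d = h_{\Om_{-d}}$, which by the inclusion criterion of Section~\ref{ss:support} means exactly $\Om_{-d} \subseteq \om$. Monotonicity of the Lebesgue measure under inclusion then gives $|\om| \ge |\Om_{-d}|$, with strict inequality as soon as the inclusion is strict, since the difference of two nested planar convex bodies that are not equal has nonempty interior. Therefore $\Om_{-d}$ is the unique solution of \eqref{eq:prob_d}.

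The only genuinely delicate step is the identification $\om_d = \Om_{-d}$: the identity $h_{\Om_{-d}} = h_\Om - d$ fails in general, because the support function of an inner parallel set need not be an affine shift of $h_\Om$ (corners of $\Om$ can make the erosion much smaller than the naive shift). The hypothesis $h_\Om'' + h_\Om \ge d_0$ is precisely the condition that the radius of curvature of $\partial\Om$ is everywhere at least $d_0$, i.e. that a disk of radius $d_0$ rolls freely inside $\Om$, and this is exactly what guarantees $\om_d + dB = \Om$ for $d \le d_0$. Everything else is routine bookkeeping with the properties of support functions from Section~\ref{ss:support} together with the monotonicity of area under inclusion.
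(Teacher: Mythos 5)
Your proof is correct and follows essentially the same route as the paper: check that $h_\Om-d$ defines a convex body via the curvature criterion, identify that body with the inner parallel set $\Om_{-d}$, and conclude by the inclusion--monotonicity of the measure. The only difference is that you supply a full justification (via $\om_d+dB=\Om$ and a separation argument) of the identification $h_{\Om_{-d}}=h_\Om-d$, which the paper merely asserts, so your write-up is if anything more complete.
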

\begin{proof}
Let $d\in[0,d_0]$. The function $\psi_d = h_\om-d$ corresponds to the support function of a convex set. Indeed, 
$$\psi_d''+\psi_d = (h_\Om-d)''+(h_\Om-d) = h_\Om''+h_\Om-d\ge h_\Om''+h_\Om -d_0\ge 0. $$
In fact, in this case, the function $\psi_d$ corresponds to the support function of the inner parallel set $\Om_{-d}$. Thus, any set $\om$ satisfying the constraints of problem \eqref{eq:prob_d} satisfies $\psi_d\leq h_\om$, which is equivalent to the inclusion $\Om_{-d}\subset \om$ which yields $|\Om_{-d}|\leq |\om|$. This proves that $\Om_{-d}$ is the only solution of Problem \eqref{eq:prob_d}. 
\end{proof}

One can then state the following corollary:
\begin{corollary}
If the container $\Omega$ is given by $K+ d_0 B_1:=\{x+y\ |\ x\in K\ \text{and}\ y\in B_1\}$, where $K$ is a convex, $d_0>0$ and $B_1$ is the unit ball, then, for every $d\in [0,d_0]$, the solution of \eqref{eq:prob_d} is given by the inner parallel set $\Omega_{-d} = K+ (d_0-d) B_1$.  In particular, if $\Omega$ is a stadium of inradius $r$, then for every $d\in [0,r]$, the solution of \eqref{eq:prob_d} is given by the stadium $\Omega_{-d}$.  
\end{corollary}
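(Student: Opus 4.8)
The plan is to derive the corollary as an immediate consequence of the preceding proposition: the only genuine task is to check that the curvature hypothesis $h_\Omega''+h_\Omega\ge d_0$ holds for $\Omega = K + d_0 B_1$, and then to identify the inner parallel sets explicitly. The two tools I would rely on throughout are the linearity of support functions under Minkowski sums, $h_{A+B}=h_A+h_B$ together with the dilation property $h_{tA}=t\,h_A$, and the fact that the unit ball has constant support function, $h_{B_1}(\theta)=1$ on $[0,2\pi)$.

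First I would compute the support function of the container. Using the two tools above,
$$h_\Omega = h_{K + d_0 B_1} = h_K + d_0\,h_{B_1} = h_K + d_0.$$
Differentiating twice and adding then gives $h_\Omega''+h_\Omega = (h_K''+h_K) + d_0 \ge d_0$, since the convexity of $K$ means $h_K''+h_K\ge 0$ in the sense of distributions. Hence the proposition applies, with the constant $d_0$ from the Minkowski decomposition serving precisely as the constant in its hypothesis; consequently, for every $d\in[0,d_0]$ the unique solution of \eqref{eq:prob_d} is the inner parallel set $\Omega_{-d}$, whose support function is $h_\Omega - d$.

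It then remains to identify $\Omega_{-d}$ with $K + (d_0-d)B_1$, which I would carry out at the level of support functions. The convex body $K + (d_0-d)B_1$ has support function $h_K + (d_0-d)$, whereas
$$h_{\Omega_{-d}} = h_\Omega - d = (h_K + d_0) - d = h_K + (d_0-d).$$
Since a convex body is uniquely determined by its support function, the two sets coincide. For the stadium, I would simply specialize to $K=S$ and $d_0=r$, where $S$ is the spine segment: a stadium of inradius $r$ is exactly $S + rB_1$, so the general formula yields $\Omega_{-d} = S + (r-d)B_1$, again a stadium, for all $d\in[0,r]$.

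I do not expect a real obstacle: the argument is a direct application once the support-function computation is in place. The one point deserving care is the bookkeeping that ensures the $d_0$ of the Minkowski decomposition coincides with the hypothesis constant $d_0$ of the proposition, so that the admissible range $[0,d_0]$ for $d$ in the conclusion is exactly the range on which optimality of $\Omega_{-d}$ is guaranteed; the computation $h_\Omega''+h_\Omega\ge d_0$ above makes this match transparent.
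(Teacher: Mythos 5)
Your proposal is correct and is precisely the argument the paper intends (the corollary is stated without proof as an immediate consequence of the preceding proposition): the identity $h_{K+d_0B_1}=h_K+d_0$ gives $h_\Omega''+h_\Omega=h_K''+h_K+d_0\ge d_0$, the proposition applies, and the inner parallel set is identified as $K+(d_0-d)B_1$ by matching support functions. No gaps; the stadium case is the correct specialization to $K$ a segment and $d_0=r$.
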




\begin{remark}
    We note that the inner parallel sets ($\Omega_{-d}$) do not always provide solutions of the problem $(\p_\infty)$. Indeed, in \cite[Theorem 2]{Ftouhi_zuazua}, the authors show that for a sufficiently small measure, the optimal sensor of the square is given by a given rectangle. 
\end{remark}



\section{Numerical simulations}
\label{s:numerics}

In this section we limit ourselves to the planer setting and present the numerical schemes adopted to solve the problems under consideration. To obtain satisfactory results, we combine two  discretization frameworks: the first, more classical, is based on optimizing the coefficients of the Fourier decomposition of the support function, the second, more recent, was introduced by B. Bogosel \cite{beni_num}, where a rigorous discrete convexity condition is found and that can capture well the presence of segments in the boundary, which is very important in our case since the optimal shapes are proved to contain segments in their boundaries (c.f. Theorem \ref{th:main_gamma_2}). \vspace{2mm}

We recall that we are mainly interested in solving the following type of problems 
$$\min_{\om}\left\{\int_0^{2\pi} |h_\Om-h_\om|^p d\theta\ |\ \omega\subset \Om\ \text{is convex and}\ |\om|=\alpha|\Omega| \right\},$$
where $h_\Om$ and $h_\om$ are the support functions of $\Om$ and $\om$ respectively, $p\in [1,+\infty)\cup \{+\infty\}$ and $\alpha\in [0,1]$. 

By the results recalled in Section \ref{ss:support}, the problem can be formulated in a purely analytic setting as follows 
$$\min_{h\in H^1_{\text{per}}} \left\{\int_0^{2\pi} (h_\Om-h)^p d\theta\ |\ h\leq h_\Om,\ h''+h\ge 0\ \text{and}\   \int_0^{2\pi} (h^2-{h'}^2)d\theta = \alpha|\Omega|\right \},$$
where $H^1_{\text{per}}(0,2\pi)$ is the set of $H^1$ functions that are $2\pi$-periodic.

\subsection{Method 1: Optimizing the Fourier coefficients}\label{ss:fourier_numerics}

In Section \ref{ss:support} we recall that if $\om$ is convex, we have the following formula for the area 
$$|\om| = \frac{1}{2}\int_0^{2\pi} h(h''+h)d\theta=\frac{1}{2} \int_0^{2\pi} (h^2-{h'}^2)d\theta,$$
where $h$ corresponds to the support function of $\om$. 

On the other hand, the inclusion constraint $\om\subset \Om$ can be expressed by $h\leq h_\Om$ on $[0,2\pi]$ and the convexity of the sensor $\om$ is expressed  as
$$h''+h\ge 0,$$
in the sense of distributions. We refer to \cite{schneider} for more details and results on convexity. 

Therefore, we are mainly interested in the analytical problem 
\begin{equation}\label{prob:fonctions}
\left\{\begin{matrix}
\inf\limits_{h\in H^1_{\text{per}}(0,2\pi)} \bigintssss_{0}^{2\pi}(h_\Om-h)^p d\theta,\vspace{2mm}
\\ 
h\leq h_\Om,\vspace{2mm}
\\ 
h''+h\ge 0,\vspace{2mm}
\\
\frac{1}{2}\int_0^{2\pi} h(h''+h)d\theta = \alpha|\Omega|,
\end{matrix}\right.    
\end{equation}

To perform the numerical approximation of optimal shape, we have to retrieve a finite dimensional setting. We then follow the same ideas in \cite{beni_1} and consider Fourier decompositions of the support functions truncated at a certain order $N\ge 1$. We then look for solutions in the set 
$$\mathcal{H}_N:= \left\{\theta\longmapsto a_0 + \sum_{k=1}^N \big(a_k \cos{(k\theta)}+ b_k\sin{(k\theta)}\big) \ \big|\ a_0,\dots,a_N,b_1,\dots,b_N\in \R\right\}.$$
This approach is justified by the following approximation proposition: 
\begin{proposition}(\cite[Section 3.4]{schneider})\\
Let $\Om\in \K^2$ and $\eps>0$. Then there exists $N_\eps$ and $\Om_\eps$ with support function $h_{\Om_\eps}\in \mathcal{H}_{N_\eps}$ such that $d^H(\Om,\Om_\eps)<\eps$. 
\end{proposition}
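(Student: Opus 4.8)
The plan is to establish this density statement through two successive regularization steps, the recurring difficulty being to preserve the convexity condition $h''+h\ge 0$ that characterizes support functions (see Section \ref{ss:support}). A naive attempt—simply truncating the Fourier series of $h_\Om$—fails, because the partial sums need not satisfy $h''+h\ge 0$ and hence need not be support functions of genuine convex sets. The remedy is to first smooth $h_\Om$ in a way that makes the curvature term strictly positive, and only then truncate.

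First I would mollify on the circle. Since $\Om$ is full-dimensional, the nonnegative measure $\mu:=h_\Om''+h_\Om$ is nonzero. I choose a family $(\phi_\delta)_{\delta>0}$ of smooth, $2\pi$-periodic kernels that are \emph{strictly} positive everywhere, have unit integral, and concentrate at $0$ as $\delta\to 0$ (for instance periodized Gaussians). Setting $\tilde h_\delta:=h_\Om * \phi_\delta$, one gets $\tilde h_\delta\in C^\infty$, and since differentiation commutes with convolution, $\tilde h_\delta''+\tilde h_\delta = \mu * \phi_\delta$. The key observation is that convolving a nonzero nonnegative measure with an everywhere-positive kernel produces an everywhere-positive function, so $\tilde h_\delta''+\tilde h_\delta\ge c_\delta>0$; thus $\tilde h_\delta$ is the support function of a smooth, strictly convex body. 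Moreover $\tilde h_\delta\to h_\Om$ uniformly (as $h_\Om$ is continuous, indeed Lipschitz), so the corresponding body can be made arbitrarily $d^H$-close to $\Om$.

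Next I would truncate the now-smooth support function. Writing $S_N$ for the $N$-th Fourier partial sum, I use that $S_N g$ is exactly of the form defining $\mathcal{H}_N$, and that differentiation commutes with truncation, so $(S_N\tilde h_\delta)''+ S_N\tilde h_\delta = S_N(\tilde h_\delta''+\tilde h_\delta)$. Because $\tilde h_\delta$ is $C^\infty$, its Fourier series converges in every $C^k$ norm; in particular $S_N(\tilde h_\delta''+\tilde h_\delta)\to \tilde h_\delta''+\tilde h_\delta$ uniformly. Since the limit is bounded below by $c_\delta>0$, for $N$ large enough one has $(S_N\tilde h_\delta)''+S_N\tilde h_\delta>0$, so $S_N\tilde h_\delta$ is the support function of a convex body $\Om_\eps$ with $h_{\Om_\eps}=S_N\tilde h_\delta\in\mathcal{H}_N$.

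Finally I would assemble the estimate $d^H(\Om,\Om_\eps)=\|h_\Om-h_{\Om_\eps}\|_\infty\le \|h_\Om-\tilde h_\delta\|_\infty+\|\tilde h_\delta-S_N\tilde h_\delta\|_\infty$, choosing first $\delta$ small so the first term is below $\eps/2$, then $N=N_\eps$ large so the second term is below $\eps/2$. The main obstacle, and the whole reason for the two-step scheme, is the tension between smoothness and the sign constraint: one needs the strict lower bound $c_\delta>0$ on $\tilde h_\delta''+\tilde h_\delta$—supplied by the positivity of the mollifier—together with the uniform $C^2$ convergence of the truncation, in order to guarantee that the trigonometric-polynomial approximant is still a legitimate support function rather than merely a function close to one.
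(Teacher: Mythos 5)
Your argument is correct. The paper itself gives no proof of this proposition --- it is quoted directly from Schneider's book (Section 3.4), where the standard argument is precisely of the type you describe: regularize the support function by convolution with a positive kernel on the circle (or rotation group), then expand in spherical harmonics, which in the plane are exactly trigonometric polynomials. Your two-step scheme is sound: the identity $\tilde h_\delta''+\tilde h_\delta=(h_\Om''+h_\Om)*\phi_\delta$ together with the strict positivity of the mollifier gives the uniform lower bound $c_\delta>0$ on the curvature measure (its total mass is $\int_0^{2\pi}h_\Om\,d\theta=P(\Om)>0$, so the measure is indeed nonzero), and the $C^2$-convergence of the partial sums of a $C^\infty$ function then lets you transfer the strict inequality to the truncation. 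One remark worth making: the two steps can be merged into one by replacing the partial sums $S_N$ with the Fej\'er means $\sigma_N$, i.e.\ convolution with the nonnegative Fej\'er kernel. Then $\sigma_N h_\Om$ is already a trigonometric polynomial of degree $N$, it converges uniformly to the continuous function $h_\Om$, and $(\sigma_N h_\Om)''+\sigma_N h_\Om=(h_\Om''+h_\Om)*F_N\ge 0$ automatically, so no strict lower bound and no preliminary smoothing are needed. Your version buys a \emph{strictly} convex smooth approximant, which is sometimes useful, at the cost of the extra mollification step.
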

\begin{remark}
 We refer to \cite{beni_1} for some theoretical convergence results and applications to other different problems.    
\end{remark}
Let us now consider the regular subdivision $(\theta_k)_{k\in \llbracket 1,M \rrbracket }$ of $[0,2\pi]$, where $\theta_k= {2k\pi}/{M}$ and $M\ge 3$. The inclusion constraints $h_\Om-d\leq h\leq h_\Om$ and the convexity constraint $h''+h\ge 0$ are approximated by the following $2M$ linear constraints on the Fourier coefficients: 
$$\forall k\in \llbracket1,M\rrbracket,\ \ \ \ \ \ \left\{\begin{matrix}
a_0+\sum\limits_{j=1}^N \big(a_j \cos{(j\theta_k)}+ b_j\sin{(j\theta_k)}\big)\leq  h_\Om(\theta_k) ,\vspace{2mm}
\\ 
 a_0+\sum\limits_{j=1}^N \big((1-j^2)\cos{(j\theta_k)}a_j + (1-j^2)\sin{(j\theta_k)}b_j \big)\ge 0.
\end{matrix}\right.  $$

As for the area of the set $\omega$, it is approximated by the following quadratic formula:
$$|\om|=\pi a_0^2 + \frac{\pi}{2} \sum\limits_{j=1}^N(1-j^2)(a_j^2+b_j^2).$$

Therefore, the infinitely dimensional problem \eqref{prob:fonctions} is approximated by the following finitely dimensional one
\begin{equation}\label{prob:approx_1}
\left\{\begin{matrix}
\min\limits_{(a_0,a_1,\dots,a_N,b_1,\dots,b_N)\in \R^{2N+1}} \bigintsss_0^{2\pi} \left(h_\Om(\theta) - a_0 - \sum\limits_{j=1}^N \big(a_j \cos{(j\theta)}+b_j\sin{(j\theta)}\big) \right)^p d\theta,\vspace{2mm}
\\ 
\forall k\in \llbracket1,M\rrbracket,\ \ \  a_0+\sum\limits_{j=1}^N \big(a_j \cos{(j\theta_k)}+ b_j\sin{(j\theta_k)}\big)\leq  h_\Om(\theta_k),\vspace{2mm}
\\ 
\forall k\in \llbracket1,M\rrbracket,\ \ \   a_0+\sum\limits_{j=1}^N \big((1-j^2)\cos{(j\theta_k)}a_j + (1-j^2)\sin{(j\theta_k)}b_j \big)\ge 0,\vspace{2mm}
\\
\pi a_0^2 + \frac{\pi}{2} \sum\limits_{j=1}^N(1-j^2)(a_j^2+b_j^2) = \alpha|\Omega|. 
\end{matrix}\right.    
\end{equation}

\subsection{Method 2: A new method based on a rigorous convexity parametrization}\label{ss:beni_numerics}
The method based on the optimization of the Fourier coefficients of the support function presented in Section \ref{ss:fourier_numerics} showed to be less efficient when the optimal shape is not strictly convex, which is the case for the problems considered in this paper as shown in Theorem \ref{th:main_gamma_2}. We then propose to combine it with the recent parametrization introduced by B. Bogosel in \cite{beni_num} that allows to overcome this difficulty and capture segments on the boundaries of the optimal shapes.

It is classical that given the support function $h$ of a strictly convex shape $\Om\subset\R^2$, $h$ is of class $C^1$ and a parametric representation of $\partial \Om$ is given by  
$$\left\{\begin{matrix}
x(\theta) = h(\theta)\cos{\theta} - h'(\theta)\sin{\theta},\vspace{2mm}
\\ 
y(\theta) = h(\theta)\sin{\theta} + h'(\theta)\cos{\theta}.
\end{matrix}\right.$$


For $N\ge 3$, we consider an equidistant partition of $[0,2\pi]$ given by $\theta_j= 2\pi j/N$ and denote $h_j := h(\theta_j)$, where $j\in \llbracket 0,N \rrbracket$. We then use the following approximations: 

\begin{itemize}
\item  The objective function is approximated as follows 
$$\int_0^{2\pi} (h_\Om-h)^p d\theta \approx  \frac{2\pi}{N} \sum_{j=1}^n (h_\Om(\theta_j) - h_j)^p.$$

\item The main novel idea of \cite{beni_num} is to approximate the radius of curvature $h''+h$ as follows 
$$h''(\theta_j)+h(\theta_j) = h_j + \frac{h_{j+1}-2h_j +h_{j-1}}{2-2 \cos{\frac{2\pi}{N}}},\ \ \ j\in \llbracket 1,N \rrbracket,$$
which provides the following rigorous discrete convexity condition
$$h_{j+1} + h_{j-1}-2 h_j\cos{\frac{2\pi}{N}}\ge 0,\ \ \ j\in \llbracket 1,N \rrbracket.$$

 \item The inclusion $\om\subset\Om$ (equivalent to $h\leq h_\Om$) is then approximated via the inequalities 
$$h_j\leq h_\Om(\theta_j),\ \ \ j\in \llbracket 1,N \rrbracket.$$

\item As for the area of $\om$, we write
\begin{align*}
|\om| &=  \frac{1}{2}\int_{0}^{2\pi} h(h''+h)d\theta \\
&\approx  \frac{\pi}{N}\sum_{j=1}^N h_j\left(h_j + \frac{h_{j+1}-2h_j +h_{j-1}}{2-2 \cos{\frac{2\pi}{N}}}\right) \\
&= \frac{\pi/N}{2-2\cos{\frac{2\pi}{N}}}\sum_{j=1}^N h_j(h_{j+1} + h_{j-1}-2 h_j\cos{\frac{2\pi}{N}}).
\end{align*}

\end{itemize}

Problem \eqref{prob:fonctions} is then approximated as follows:
\begin{equation}\label{prob:approx_2}
\left\{\begin{matrix}
\min\limits_{(h_1,\dots,h_N)\in \R^{N}} 
 \frac{2\pi}{N} \sum\limits_{j=1}^N (h_\Om(\theta_j) - h_j)^p,\vspace{3mm}
\\ 
\forall j\in \llbracket1,N\rrbracket,\ \ \  h_j \leq  h_\Om(\theta_j),\vspace{3mm}
\\ 
\forall j\in \llbracket1,N\rrbracket,\ \ \   h_{j+1} + h_{j-1}-2 h_j\cos{\frac{2\pi}{N}}\ge 0,\vspace{3mm}
\\
\sum\limits_{j=1}^N \left(h_j h_{j+1} + h_{j-1}h_j-2 h_j^2\cos{\frac{2\pi}{N}}\right) = \frac{2\alpha N(1-\cos{\frac{2\pi}{N}})}{\pi}. 
\end{matrix}\right.    
\end{equation}

\begin{remark}
    In practice, it turns out that the use of the optimal shapes obtained with \textbf{Method 1} as initial shapes for the second \textbf{Method 2} allows to obtain quite satisfactory results, see Figures \ref{fig:history} and \ref{fig:history2}.  
\end{remark}


\subsection{Computational experiments}

\subsubsection{Comparison between the two methods}

Since the optimal sets are shown to contain polygonal parts on their boundaries (see Theorem \ref{th:main_gamma_2}), the method based on the rigorous convexity parametrization (Section \ref{ss:beni_numerics}) is likely to provide better results than the classical method via Fourier coefficients (Section \ref{ss:fourier_numerics}). In what follows, we present some numerical simulations supporting this claim. 
\begin{table}[h!]
\begin{tabular}{|c|c|}
\hline Method 1  & Method 2 \\ \hline
\includegraphics[scale=.35]{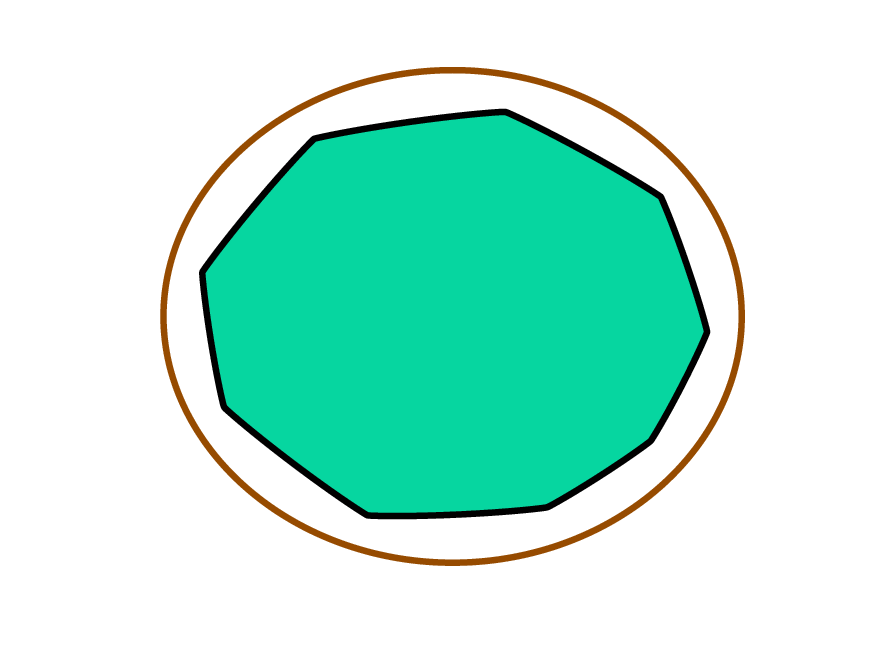} & \includegraphics[scale=.35]{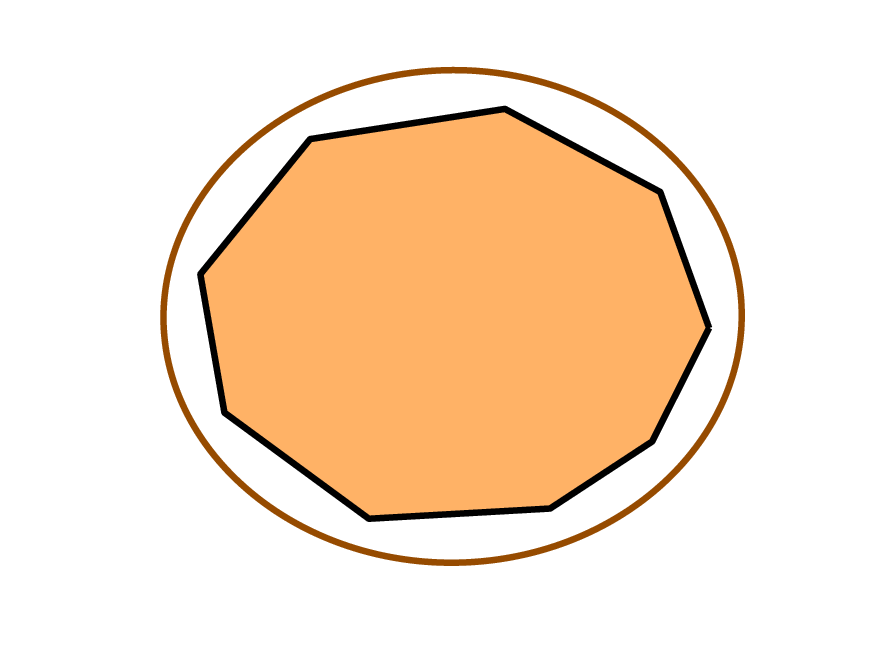} \\ \hline
 Energy = 0.942 & Energy = 0.913 \\ \hline
\end{tabular}
\caption{Obtained optimal sensors with \textbf{Methods 1} and \textbf{2} for $p=10$ and $\alpha = 0.7$.}
\end{table}

\begin{figure}[h!]
    \centering
    \includegraphics[width=0.44\linewidth]{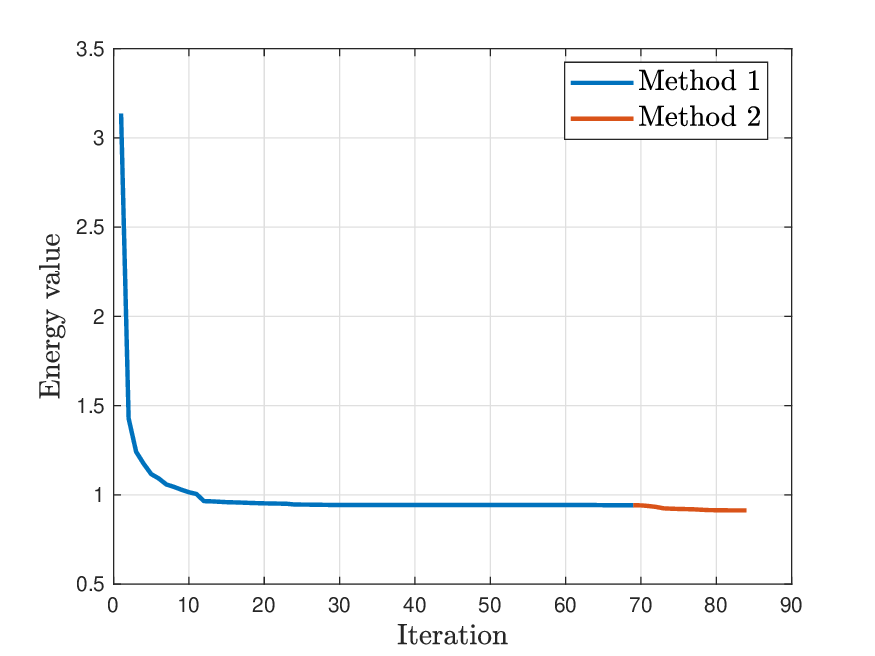}
    \includegraphics[width=0.44\linewidth]{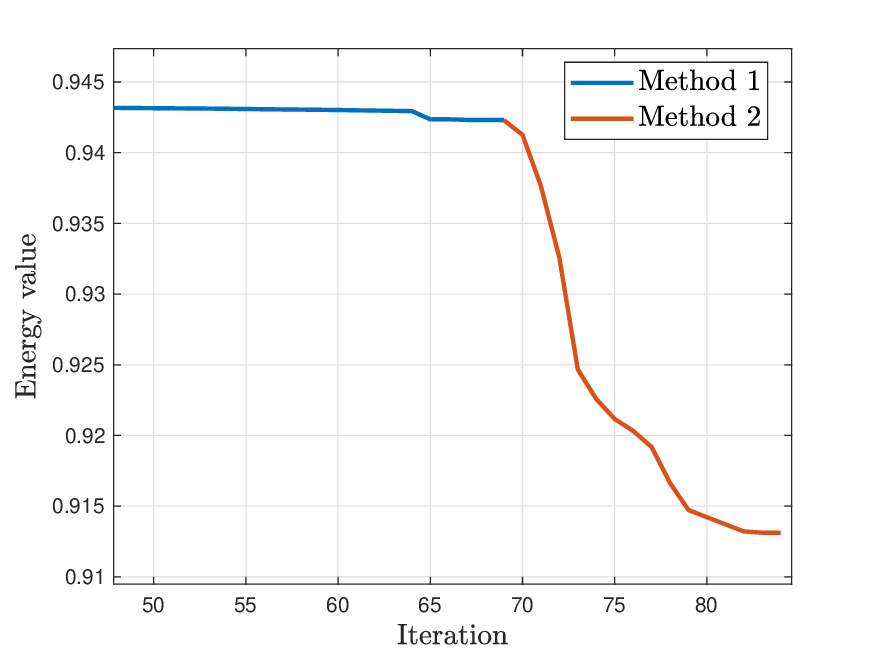}
        \caption{First example for the history of convergence of the two methods (with a zoom in the figure in right).}
    \label{fig:history}
\end{figure}

\begin{table}[h!]
\begin{tabular}{|c|c|}
\hline Method 1  & Method 2 \\ \hline
\includegraphics[scale=.4]{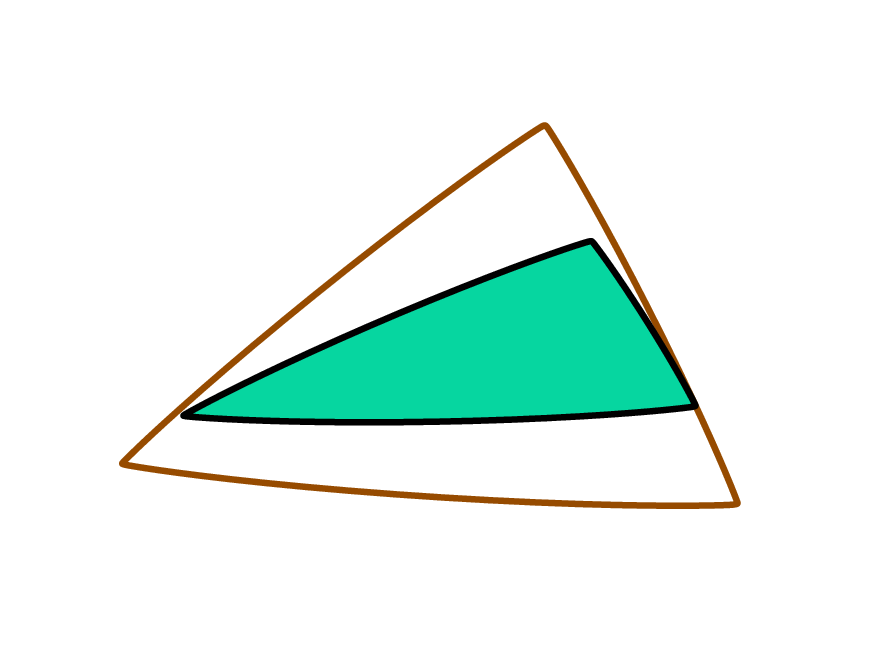} & \includegraphics[scale=.4]{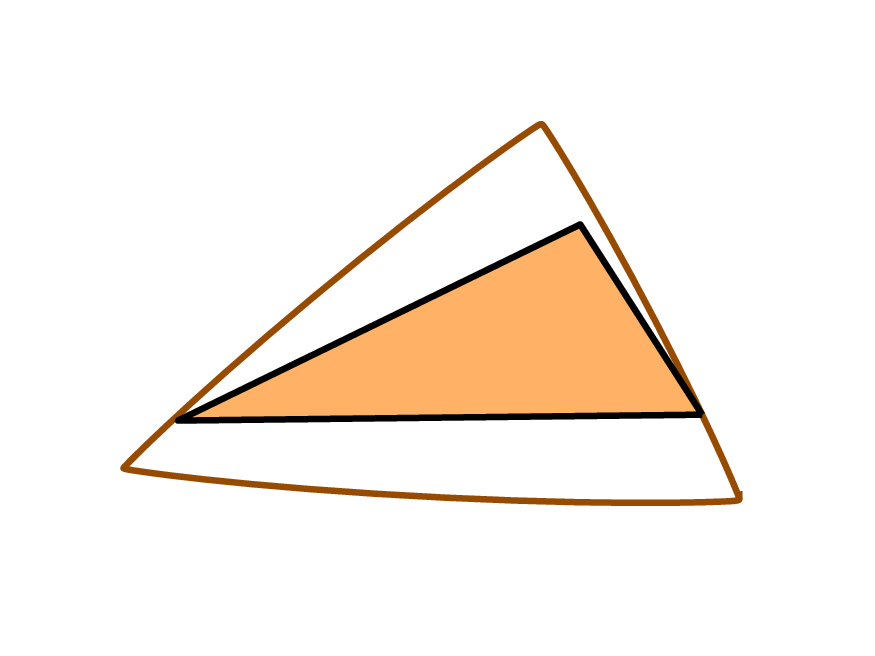} \\ \hline
 Energy = 1.185 & Energy = 1.053 \\ \hline
\end{tabular}
\caption{Obtained optimal shapes with \textbf{Methods 1} and \textbf{2} for $p=4$ and $\alpha = 0.4$.}
\end{table}

\begin{figure}[h!]
    \centering
    \includegraphics[width=0.44\linewidth]{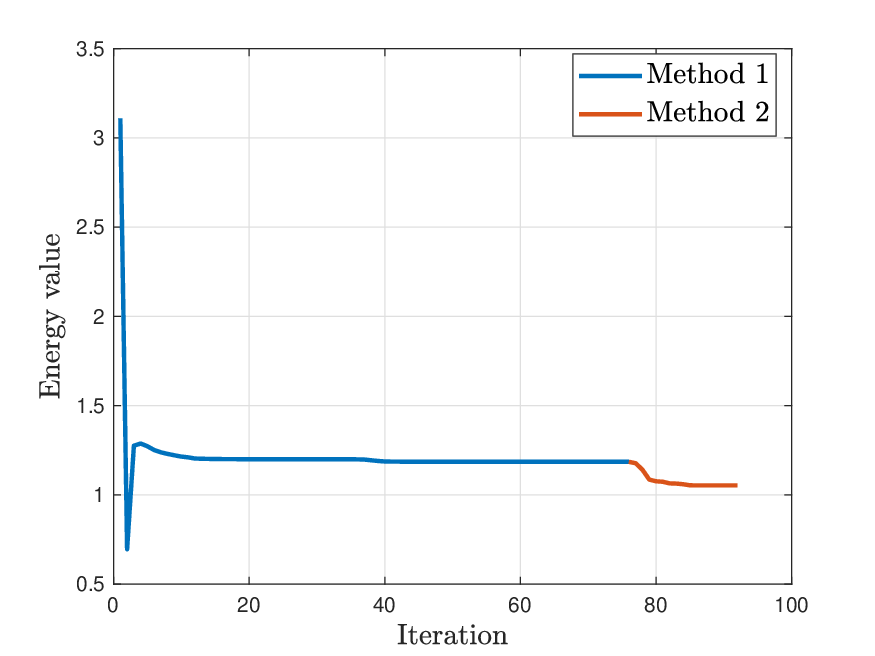}
    \includegraphics[width=0.44\linewidth]{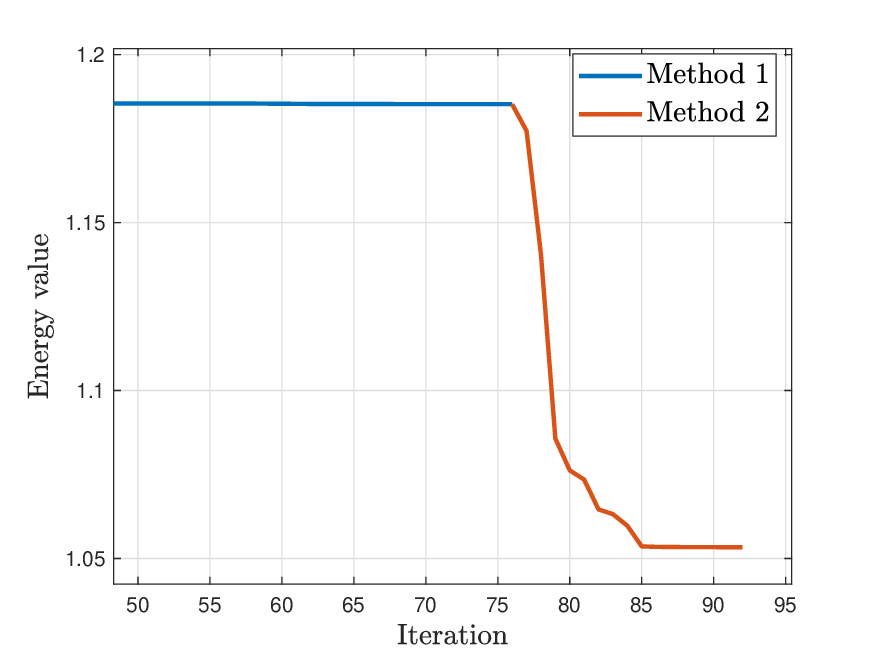}
        \caption{Second example for the history of convergence of the two methods (with a zoom in the figure in right).}
    \label{fig:history2}
\end{figure}


\subsubsection{Obtained optimal shapes}
In this section, we present some obtained numerical results for two chosen containers $\Omega$ and different values of $p$ and of the fraction $\alpha$.

It is worth noting that the problems considered are complex with many local minima. Therefore, multiple runs of the algorithm with different random initializations for the optimization variables are used. The results giving the least values of the energy are shown in the figures below.

\begin{table}[h]
\begin{tabular}{|c|c|c|c|}
\hline
             & $p=1$ & $p=2$ & $p=8$ \\ \hline
$\alpha=0.2$ &   \includegraphics[scale=.3]{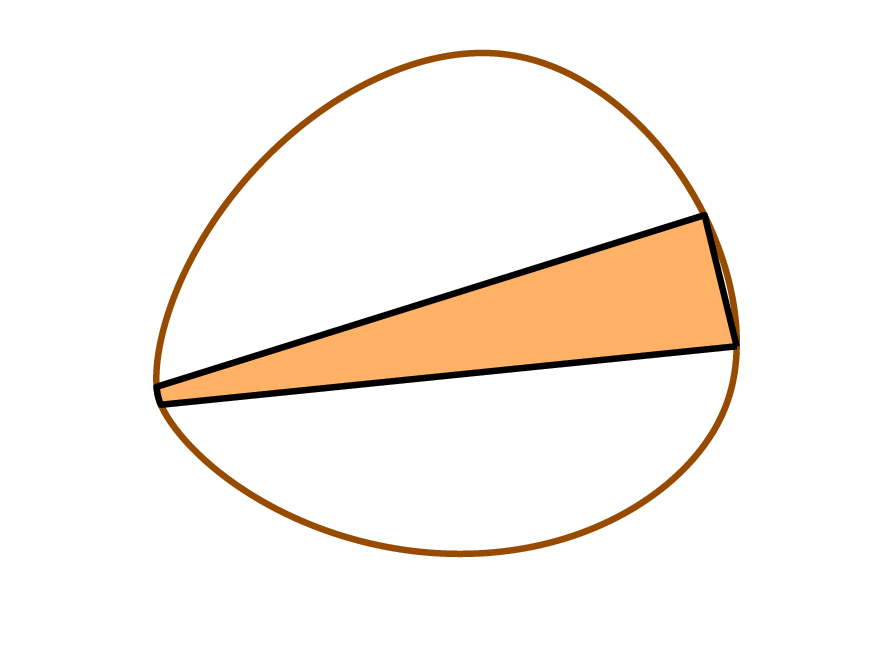}     &  \includegraphics[scale=.3]{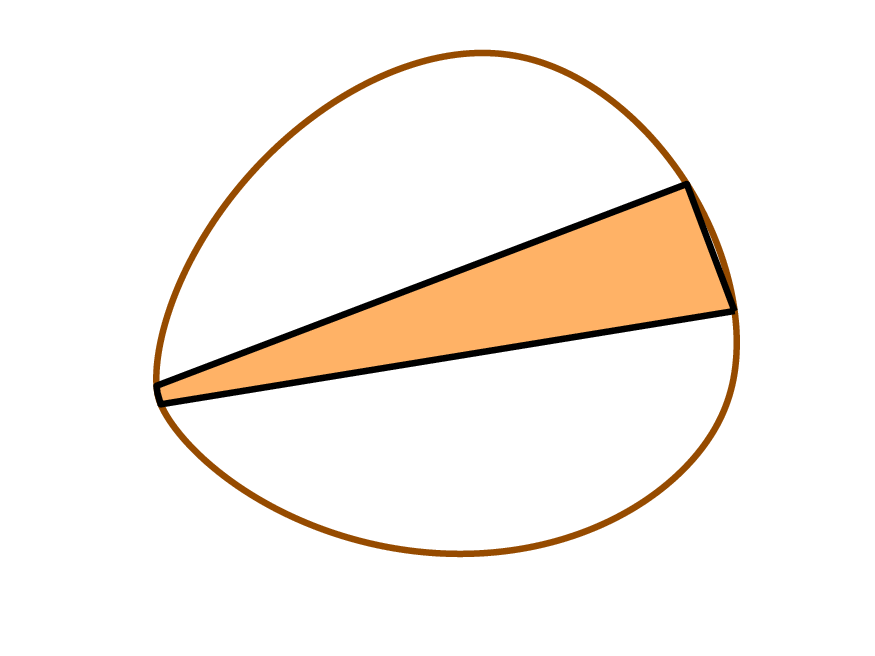}     &  \includegraphics[scale=.3]{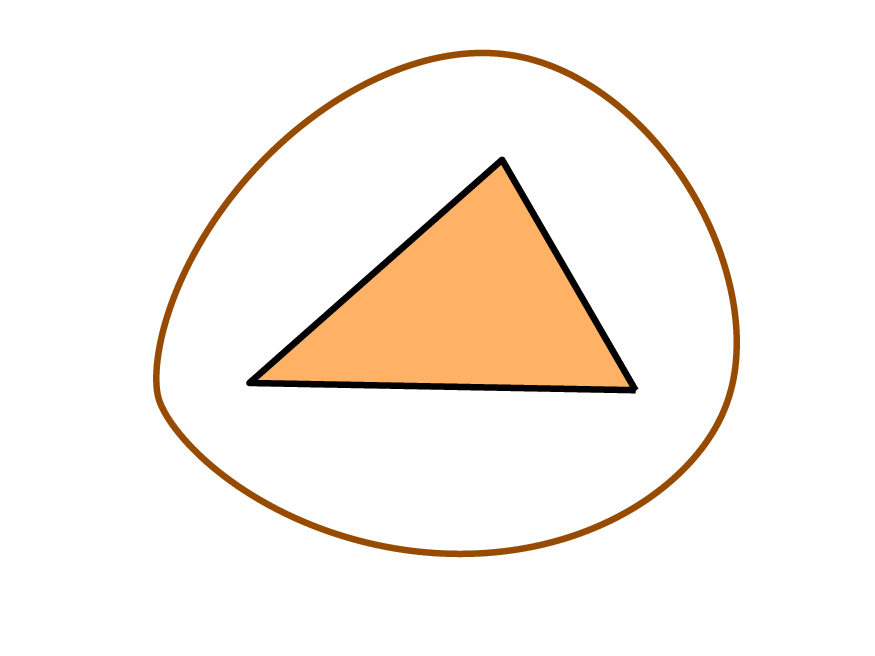}     \\ \hline
$\alpha=0.5$ &  \includegraphics[scale=.3]{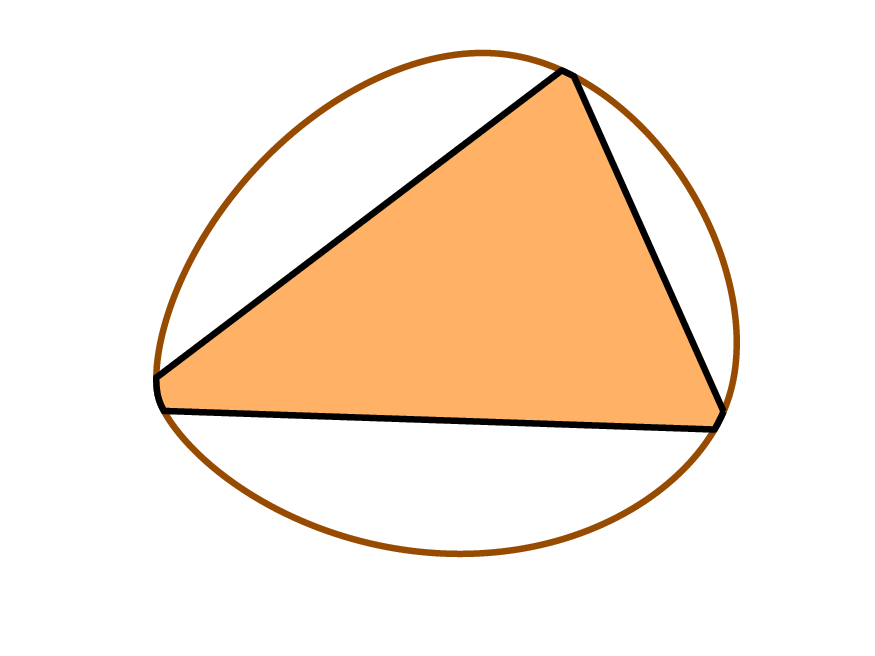}     &    \includegraphics[scale=.3]{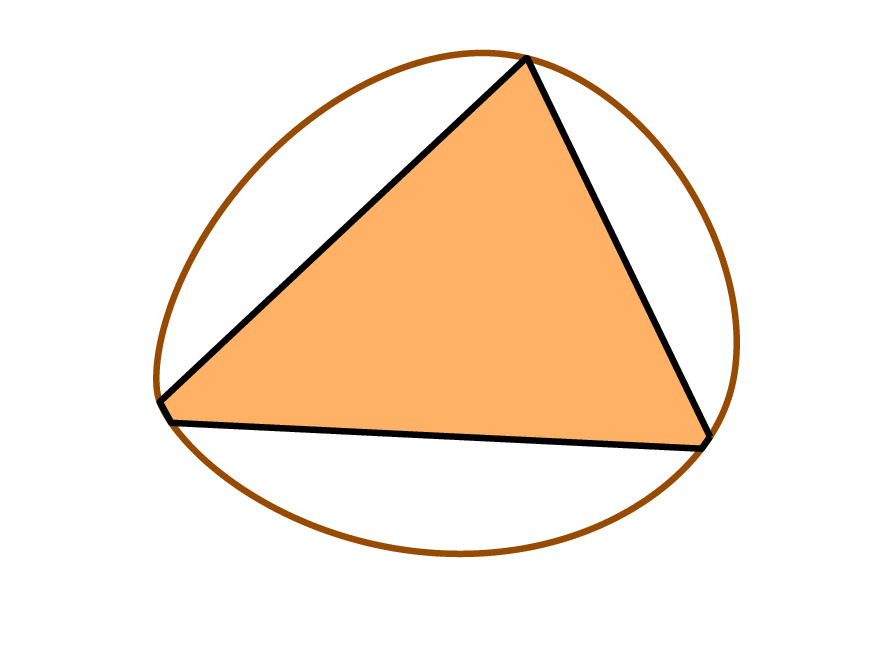}   &   \includegraphics[scale=.3]{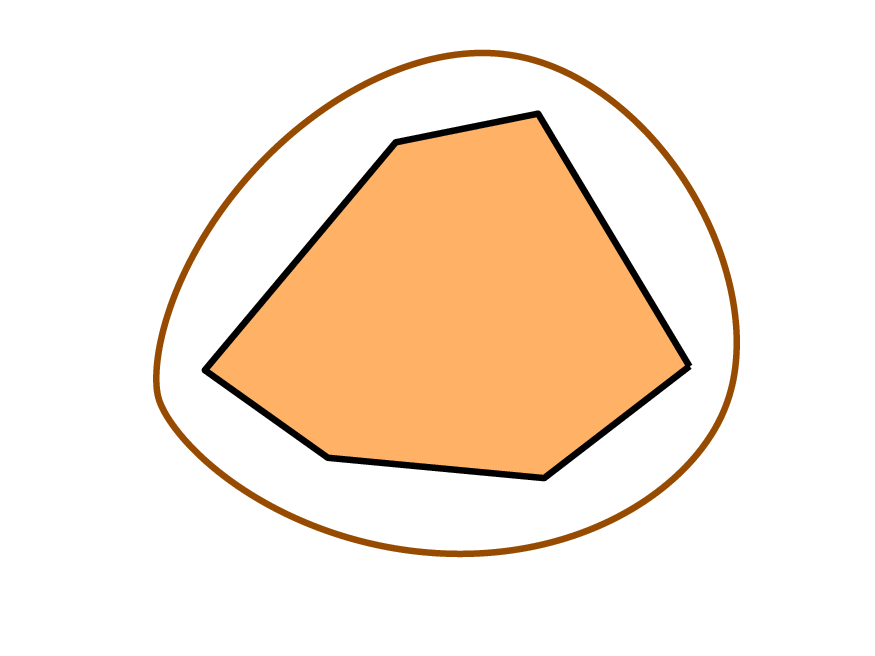}    \\ \hline
$\alpha=0.8$ & \includegraphics[scale=.3]{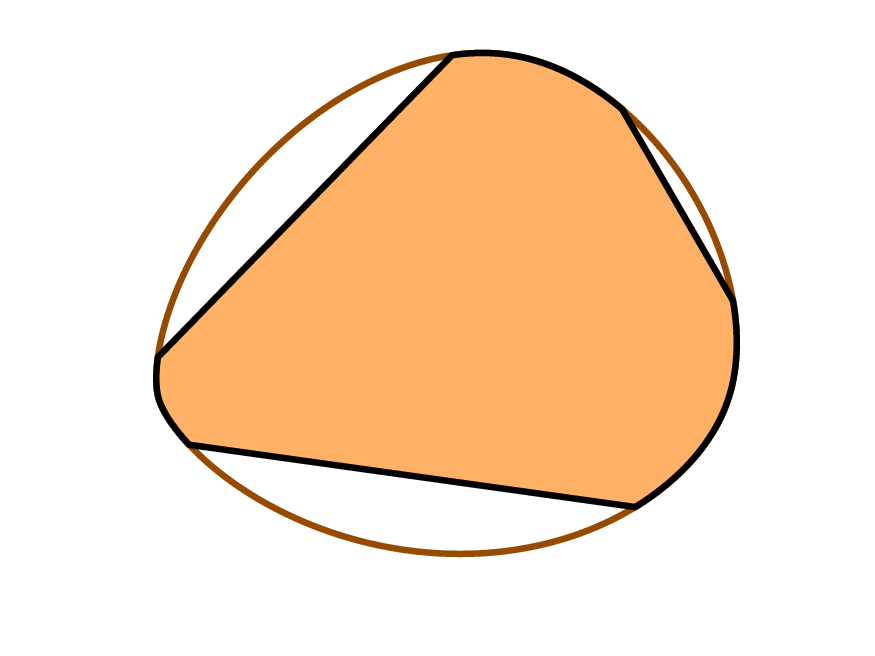}      &  \includegraphics[scale=.3]{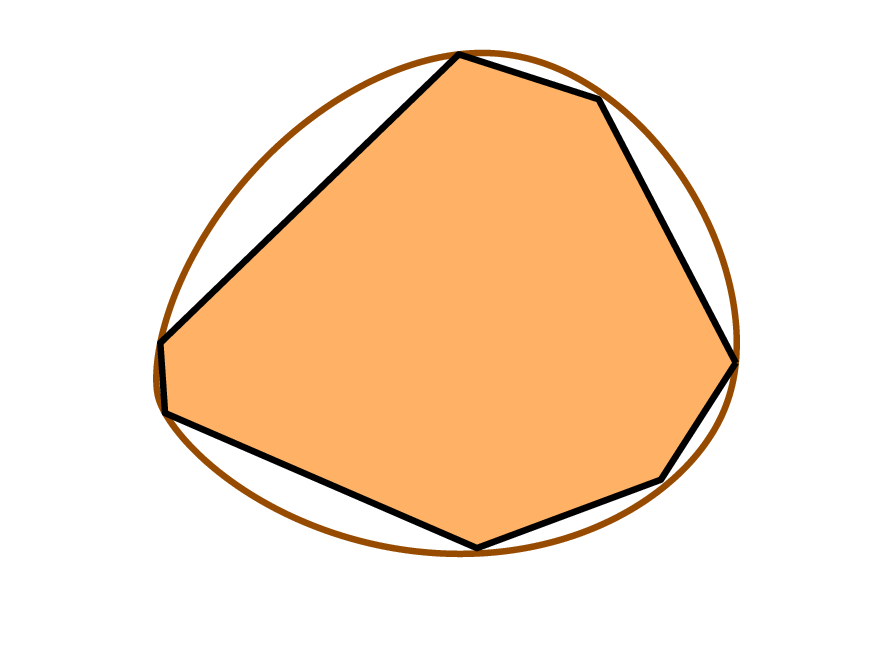}      &  \includegraphics[scale=.3]{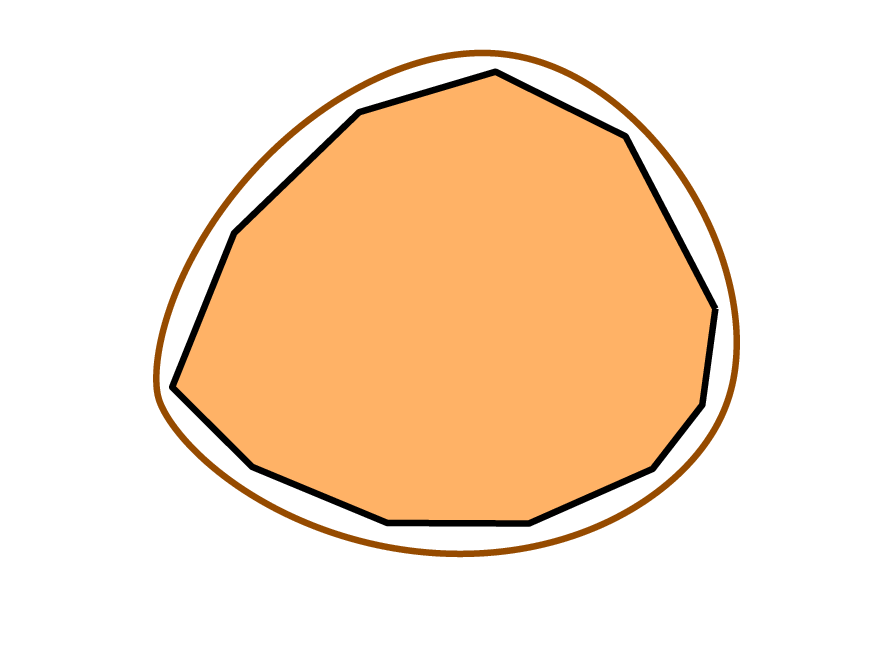}      \\ \hline
\end{tabular}
\caption{Obtained optimal shapes for $p\in\{1,2,8\}$ and $\alpha\in\{0.2,0.5,0.8\}$.}
\end{table}

\begin{table}[]
\begin{tabular}{|c|c|c|c|}
\hline
             & $p=1$ & $p=2$ & $p=8$ \\ \hline
$\alpha=0.2$ &   \includegraphics[scale=.32]{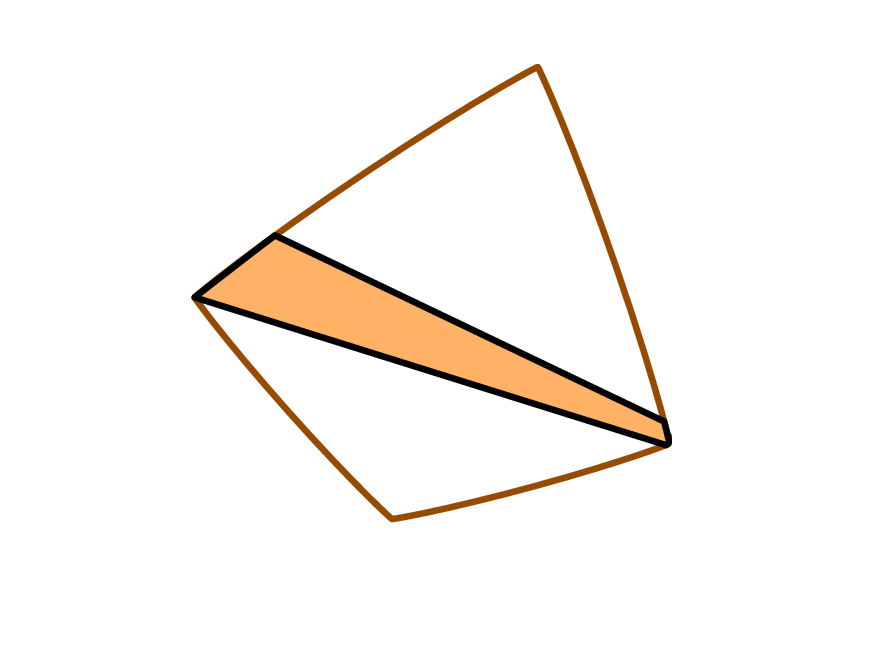}     &  \includegraphics[scale=.32]{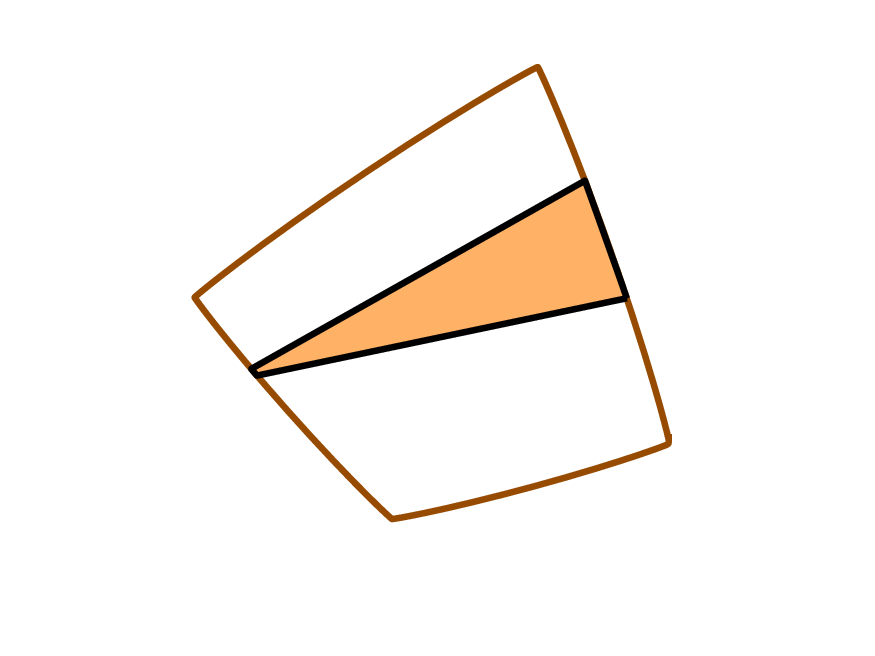}     &  \includegraphics[scale=.32]{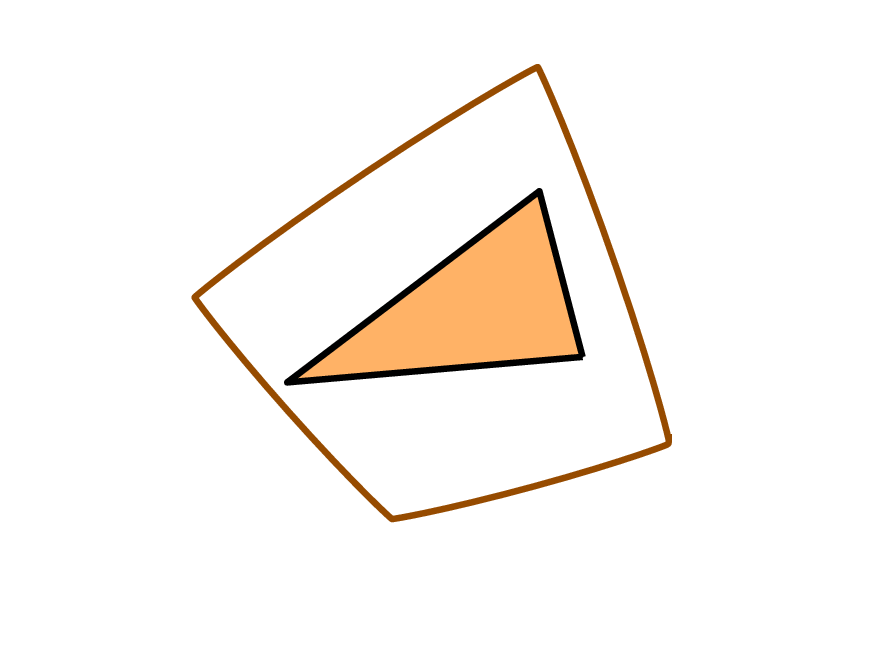}     \\ \hline
$\alpha=0.5$ &  \includegraphics[scale=.32]{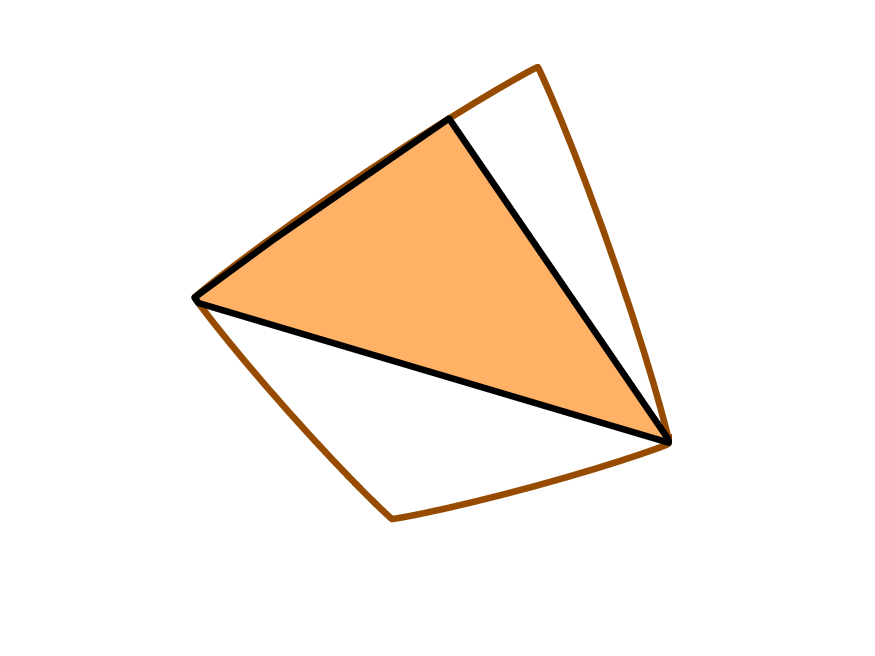}     &    \includegraphics[scale=.32]{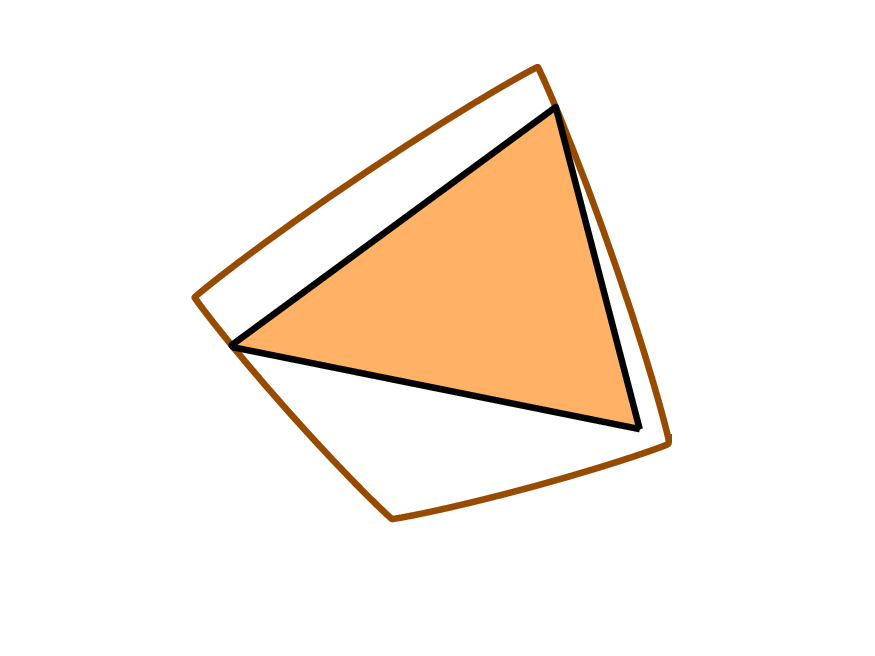}   &   \includegraphics[scale=.32]{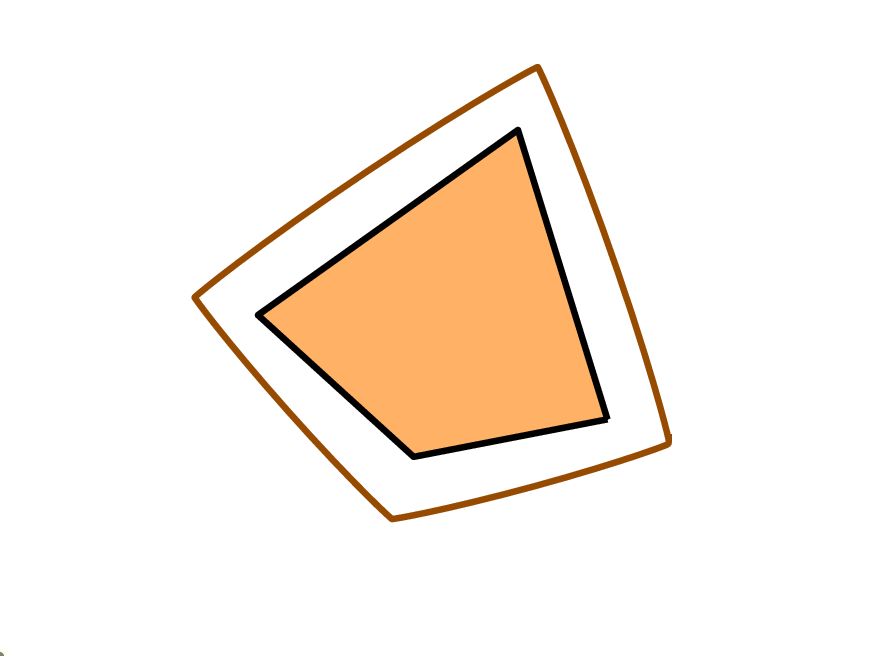}    \\ \hline
$\alpha=0.8$ & \includegraphics[scale=.32]{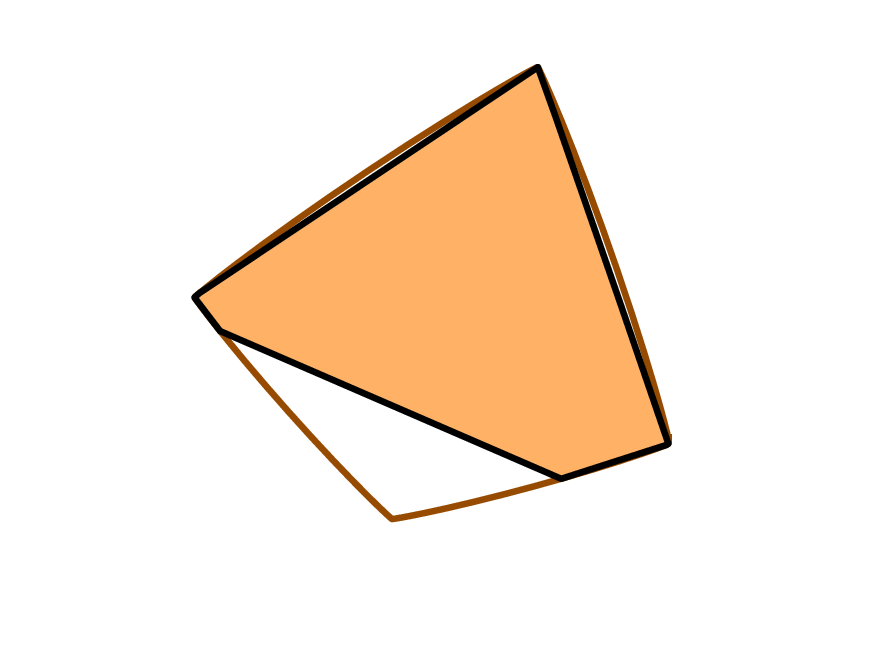}      &  \includegraphics[scale=.32]{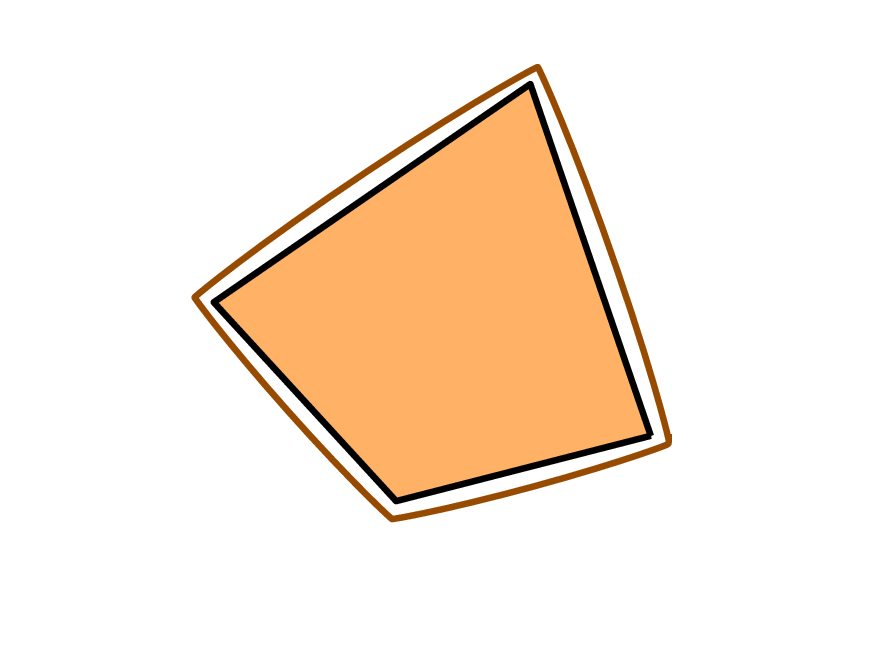}      &  \includegraphics[scale=.32]{example2_a_8_p_8.eps}      \\ \hline
\end{tabular}
\caption{Obtained optimal shapes for $p\in\{1,2,8\}$ and $\alpha\in\{0.2,0.5,0.8\}$.}
\end{table}

    

\section{Conclusions and perspectives}
The present paper addresses the problem of using the classic $L^p$-metric defined on convex bodies as a criterion for approximating a convex set $\Omega$ via convex subsets $\omega$ of a given measure. Here, we outline some key problems and research avenues that we believe merit further investigation.

\begin{itemize}
   \item {\bf More general geometries.} The analysis in this paper is limited to the case of convex domains. The same problems make sense in the absence of convexity restrictions, both on the domain $\Omega$ and the subdomains $\omega$.
   
   \item{\bf Higher dimensions.} The present paper presents a careful analysis of $2-d$ optimal shapes employing exhaustively the support function. The extension of this analysis and computational study to higher dimensions is a challenging topic.
   
     \item {\bf Other geometric constraints.} It would be interesting to explore  the same problems under other geometric constraints on the subsets, such as, for instance,  the perimeter constraint    $$P(\omega) = \int_{\mathcal{S}^{n-1}} h d\mathcal{H}^{n-1},$$
    where $h$ is the support function of $\om$. This leads to the problems
    $$\min\{\int_{\mathcal{S}^{n-1}} (h_\Om-h)^p d\mathcal{H}^{n-1}\ |\ h\leq h_\Om,\ h''+h\ge 0\ \text{and}\ \int_{\mathcal{S}^{n-1}} h d\mathcal{H}^{n-1} = c|\Omega|\},$$
    that are intuitively expected to be easier than those addressed in this paper. However, they  present their own unique challenges that deserves specific attention. \vspace{2mm}
    \item {\bf Other metrics.} It is also interesting to consider the same optimal shape design problems for different metrics. An illustrative example is provided in \cite{lemenant_convex}, where the authors consider the mean distance functional 
    $$J_\Om(K):= \int_{\Om} d(x,K)^p dx,$$
    where $K\subset\Om$ are two convex sets. The authors consider both volume and perimeter constraints on the subsets $K$ and provide explicit formulas for the first and the second order shape derivatives of $J_\Om$. In the same spirit of Modica--Mortola \cite{modica_mortola}, an approximation of the functional $J_\Om$ via relevant PDEs' solutions is introduced and studied. However, as far as we know, the computational analysis and experiments for these problem were not undertaken. \vspace{2mm}
    \item {\bf Varadhan's approximation of the distance function.} It would be valuable to explore alternative approximations of the for the mean distance, beyond those used in \cite{lemenant_convex}. In a similar vein, proving $\Gamma$-convergence results for these new approximations would also be of interest. 
    
   It would  be relevant to further exploit the use a suitable approximations of the distances functions in terms of the solution of simple elliptic PDEs, inspired by the following classical result:
\begin{theorem}\label{th:varadhan}(\cite[Theroem 2.3]{varadhan})

Let $\Omega$ be an open subset of $\R^n$ and $\varepsilon>0$. We consider the problem 
\begin{equation*}\label{prob:varadhan}
\left\{\begin{matrix}
w_\eps-\eps \Delta w_\eps=0 \ \ \ \text{in $\Omega$},
\\ 
\ \ \ \ \ \  \ \ \ \ \ \ w_\eps = 1\ \ \ \text{on $\partial \Omega$}.
\end{matrix}\right.    
\end{equation*}
 This has been successfully developed in \cite{crane}, employing the parabolic counterparts of this result.

$$\lim_{\eps\rightarrow 0} -\sqrt{\eps}\ln{w_\eps(x)} = d(x,\partial \Omega):= \inf_{y\in \partial \Omega} \|x-y\|,$$
uniformly over compact subsets of $\Om$.  
\end{theorem}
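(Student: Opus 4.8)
The plan is to reduce the linear equation for $w_\eps$ to a viscous approximation of the eikonal equation by a Hopf--Cole type transformation, and then to invoke the vanishing viscosity method together with the theory of viscosity solutions. First I would set $u_\eps := -\sqrt{\eps}\,\ln w_\eps$. The comparison principle for the operator $-\eps\Delta + \mathrm{Id}$, whose zeroth order coefficient is positive, forces $0 < w_\eps \le 1$ in $\Om$: the constant $1$ is a supersolution agreeing with $w_\eps$ on $\partial\Om$, while the strong maximum principle yields positivity in the interior. Hence $u_\eps$ is well defined, nonnegative, and satisfies $u_\eps = 0$ on $\partial\Om$. A direct computation, using $\nabla w_\eps = -\eps^{-1/2}\nabla u_\eps\, w_\eps$ and $\Delta w_\eps = \big(\eps^{-1}|\nabla u_\eps|^2 - \eps^{-1/2}\Delta u_\eps\big) w_\eps$, transforms $w_\eps - \eps\Delta w_\eps = 0$ into
$$|\nabla u_\eps|^2 - \sqrt{\eps}\,\Delta u_\eps = 1 \qquad \text{in } \Om,$$
which is precisely a vanishing viscosity regularization, with viscosity parameter $\sqrt{\eps}$, of the eikonal equation $|\nabla u|^2 = 1$.

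Next I would establish the a priori estimates. The Hamiltonian $H(p) = |p|^2 - 1$ is coercive, and I would exploit this to derive locally uniform bounds and, crucially, locally uniform Lipschitz (gradient) bounds on the family $(u_\eps)$, either through explicit barriers built from the exact half-space and ball solutions of the linear equation, or through a Bernstein-type estimate applied to the transformed equation. The model computation $w_\eps(x) = e^{-x_n/\sqrt{\eps}}$ for the half-space $\{x_n > 0\}$ already exhibits why the scaling rate $\sqrt{\eps}$ and the limit $d(\cdot,\partial\Om)$ are the correct ones, since there $-\sqrt{\eps}\,\ln w_\eps = x_n = d(x,\partial\Om)$. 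With these bounds, the Arzel\`a--Ascoli theorem provides, along subsequences, a locally uniform limit $u$.

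Finally I would identify the limit. By the stability of viscosity solutions under locally uniform convergence, any such limit $u$ is a viscosity solution of $|\nabla u|^2 = 1$ in $\Om$ with $u = 0$ on $\partial\Om$. Since the distance function $d(\cdot,\partial\Om)$ is the unique viscosity solution of this Dirichlet problem, the limit is independent of the extracted subsequence, so the whole family $(u_\eps)$ converges locally uniformly to $d(\cdot,\partial\Om)$, which is the assertion. The main obstacle I anticipate is the gradient estimate uniform in $\eps$: near $\partial\Om$ the term $\sqrt{\eps}\,\Delta u_\eps$ is delicate, and for a general open set $\Om$ the boundary condition must be read in the viscosity sense. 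This is exactly why the statement only claims uniform convergence on compact subsets of $\Om$; that restriction lets me run the barrier and estimate arguments away from $\partial\Om$ and thereby sidestep the boundary regularity issues.

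\medskip

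\noindent\emph{Remark.} An alternative route is Varadhan's original probabilistic argument: via the Feynman--Kac formula one writes $w_\eps(x) = \mathbb{E}_x\big[e^{-\tau/\eps}\big]$ for a suitable exit time $\tau$ of a rescaled Brownian motion from $\Om$, and the claimed limit then follows from large deviations (Freidlin--Wentzell) estimates for the exit time. The viscosity-solution approach sketched above is, however, better suited to the purely analytic framework used throughout this paper.
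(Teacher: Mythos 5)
The paper itself contains no proof of this statement: it is quoted (as Theorem~2.3 of the cited reference of Varadhan) in the concluding ``perspectives'' section as a classical external result, so there is no internal argument to compare yours against. Taken on its own terms, your sketch follows the standard modern PDE route to Varadhan's lemma: the logarithmic transform $u_\eps=-\sqrt{\eps}\,\ln w_\eps$, the resulting viscous eikonal equation $|\nabla u_\eps|^2-\sqrt{\eps}\,\Delta u_\eps=1$, and identification of the limit through the theory of viscosity solutions. The computations you display (the transformed equation, the half-space model $w_\eps=e^{-x_n/\sqrt{\eps}}$, the sign and boundedness of $w_\eps$ via the maximum principle) are correct, and your closing remark rightly identifies Varadhan's original proof as the probabilistic one (Feynman--Kac plus exit-time large deviations); the analytic route you choose is indeed the one used in the ``geodesics in heat'' literature the paper cites next to this theorem.

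Two points would need to be filled in before this is a proof rather than a strategy. First, the entire technical burden sits in the locally uniform two-sided estimate $e^{-(d(x,\partial\Om)+o(1))/\sqrt{\eps}}\le w_\eps(x)\le e^{-(d(x,\partial\Om)-o(1))/\sqrt{\eps}}$ on compact subsets, which you assert via ``barriers or a Bernstein estimate'' but do not construct; the radial comparison functions for $v-\eps\Delta v=0$ on balls involve modified Bessel functions, and once these bounds are written out the conclusion on compact sets essentially follows from them directly, so this is where the work is. Second, the uniqueness claim ``$d(\cdot,\partial\Om)$ is the unique viscosity solution of $|\nabla u|^2=1$ with $u=0$ on $\partial\Om$'' is false for a general open $\Om\subset\R^n$ as stated: for the half-space $\{x_n>0\}$ both $x_n$ and $-x_n$ are classical, hence viscosity, solutions vanishing on the boundary. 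You must invoke the sign information $u_\eps\ge 0$ (which you do derive from $w_\eps\le 1$) and, for unbounded or irregular $\Om$, either characterize $d$ as the maximal nonnegative subsolution or replace the global comparison principle by the localized barrier argument. Neither issue is fatal, but both must be addressed.
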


    \begin{figure}[!h]
    \centering
    \includegraphics[scale=.5]{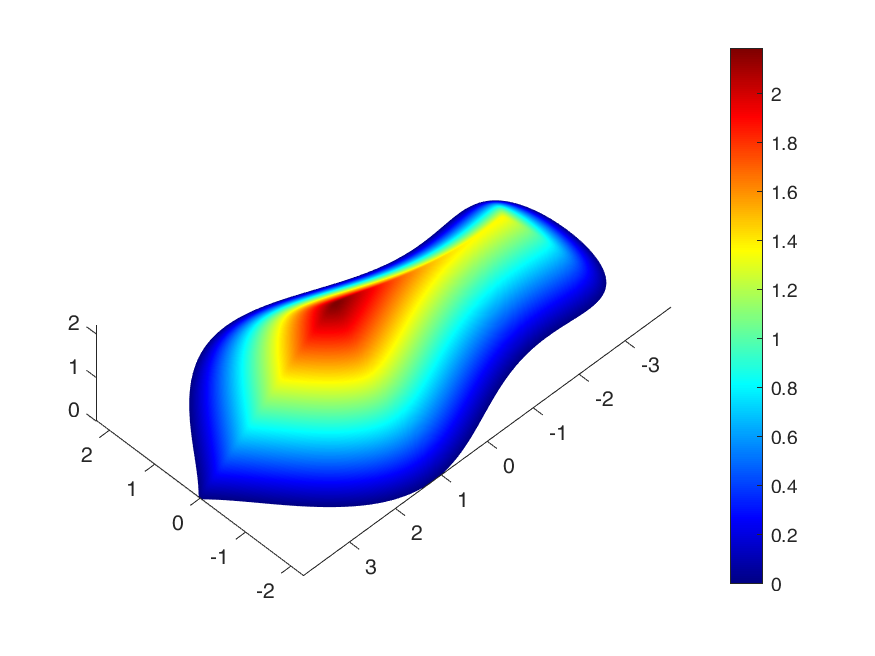}
    \includegraphics[scale=.5]{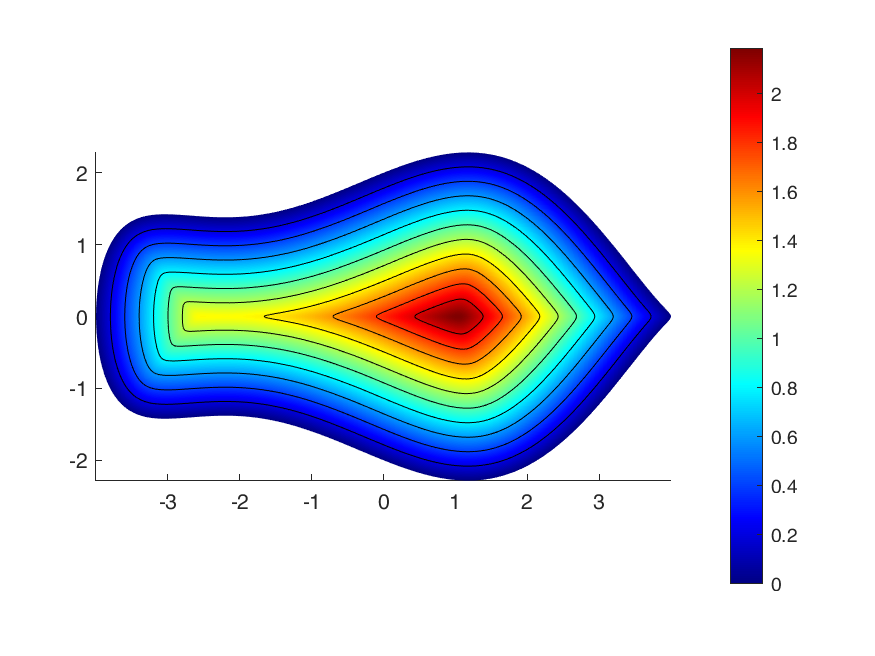}
    \caption{Varadhan's result for the approximation of the distance function.}
    \label{fig:vara}
\end{figure}
    
\end{itemize}

\section*{Acknowledgement}


I. Ftouhi is supported
by the Alexander von Humboldt Foundation via a Postdoctoral fellowship.

E. Zuazua has been supported by the Alexander von Humboldt-Professorship program, the European Union's Horizon
Europe MSCA project ModConFlex, the Transregio 154 Project of the DFG, AFOSR 24IOE027 project, grants
PID2020-112617GB-C22, TED2021-131390B-I00 of MINECO and PID2023-146872OB-I00 of MICIU (Spain), and
the Madrid Government - UAM Agreement for the Excellence of the University Research Staff in the context of the
V PRICIT (Regional Programme of Research and Technological Innovation).

\bibliographystyle{plain}
\bibliography{sample}

\vspace{.3cm}
\smaller\smaller \smaller
(\textbf{Zakaria Fattah})
\textsc{Mathematics and Computer Science Department, ENSAM of Meknes, University of Moulay Ismail, Marjane II, AL Mansour, B.P. 15290, 50050 Meknes, Morocco.}\vspace{1mm}

\textit{Email address:} \textbf{\texttt{z.fattah@edu.umi.ac.ma}}\vspace{4mm}

(\textbf{Ilias Ftouhi})
\textsc{[1] Chair for Dynamics, Control, Machine Learning and Numerics, Alexander von Humboldt-Professorship, Department of Mathematics,  Friedrich-Alexander-Universit\"at Erlangen-N\"urnberg, 91058 Erlangen, Germany,}\vspace{1mm}

\textsc{[2] King Fahd university of Petroleum and Minerals, Department of Mathematics, 31261 Dhahran, Saudi Arabia.}\vspace{1mm}

\textit{Email address:} \textbf{\texttt{ilias.ftouhi@fau.de}}\vspace{4mm}

(\textbf{Enrique Zuazua}) \textsc{[1] Chair for Dynamics, Control, Machine Learning and Numerics, Alexander von Humboldt-Professorship, Department of Mathematics,  Friedrich-Alexander-Universit\"at Erlangen-N\"urnberg,
	91058 Erlangen, Germany,}\vspace{1mm}

\textsc{[2] Chair of Computational Mathematics, Fundaci\'{o}n Deusto,
	Av. de las Universidades, 24,
	48007 Bilbao, Basque Country, Spain,}\vspace{1mm}

 \textsc{[3] Departamento de Matem\'{a}ticas,
	Universidad Aut\'{o}noma de Madrid,
	28049 Madrid, Spain.}\vspace{1mm}

\textit{Email address:} \textbf{\texttt{enrique.zuazua@fau.de}}

\end{document}